\newlength{\bibitemsep}\setlength{\bibitemsep}{.25\baselineskip plus .05\baselineskip minus .05\baselineskip}
\newlength{\bibparskip}\setlength{\bibparskip}{0pt}
\let\oldthebibliography\thebibliography
\renewcommand\thebibliography[1]{%
  \oldthebibliography{#1}%
  \setlength{\parskip}{\bibitemsep}%
  \setlength{\itemsep}{\bibparskip}%
}
\apptocmd{\thebibliography}{\raggedright}{}{}
\newcommand{\bs}{\boldsymbol}
\newcommand{\mb}{\mathbf}
\newcommand{\mc}{\mathcal}
\newcommand{\mf}{\mathfrak}
\newcommand{\mr}{\mathrm}
\declaretheoremstyle[spaceabove=6pt, spacebelo=6pt, headfont=\scshape, bodyfont=\itshape, postheadspace=.5em]{thm}
\declaretheoremstyle[spaceabove=6pt, spacebelo=6pt, headfont=\scshape, bodyfont=\normalfont, postheadspace=.5em]{defn}
\theoremstyle{thm}
\newtheorem{thm}{Theorem}
\numberwithin{thm}{section}
\newtheorem{cor}[thm]{Corollary}
\newtheorem{lemma}[thm]{Lemma}
\newtheorem{prop}[thm]{Proposition}
\theoremstyle{defn}
\newtheorem{defn}[thm]{Definition} 
\newtheorem*{motivation}{Motivation}
\newtheorem*{summary}{Summary}
\numberwithin{equation}{thm}
\setlist[enumerate,1]{label=\textup{(\textit{\roman*})},itemsep=.125\baselineskip,parsep=0pt,topsep=.125\baselineskip,listparindent=\parindent,itemindent=\parindent,wide=\parindent}
\setlist[enumerate,2]{label=\textup{(\alph*)},itemsep=.125\baselineskip,parsep=0pt,topsep=.125\baselineskip,listparindent=\parindent,leftmargin=3\parindent}
\setlist[enumerate,3]{itemsep=.125\baselineskip,parsep=0pt,topsep=.125\baselineskip,listparindent=\parindent,leftmargin=\parindent}
\setlist[itemize]{itemsep=.125\baselineskip,parsep=0pt,topsep=.125\baselineskip,listparindent=\parindent}
\DeclarePairedDelimiter{\delimpar}{(}{)}
\DeclarePairedDelimiter{\delimbrk}{[}{]}
\DeclarePairedDelimiter{\delimbrc}{\{}{\}}
\DeclarePairedDelimiter{\delimbar}{\lvert}{\rvert}
\DeclarePairedDelimiter{\delimang}{\langle}{\rangle}
\newcommand{\prns}[2][0]{%
  \ifcase#1\relax
    \delimpar{#2}\or
    \delimpar[\big]{#2}\or
    \delimpar[\Big]{#2}\or
    \delimpar[\bigg]{#2}\or
    \delimpar[\Bigg]{#2}
  \else
    \delimpar*{#2}
  \fi}
\newcommand{\brk}[2][0]{%
  \ifcase#1\relax
    \delimbrk{#2}\or
    \delimbrk[\big]{#2}\or
    \delimbrk[\Big]{#2}\or
    \delimbrk[\bigg]{#2}\or
    \delimbrk[\Bigg]{#2}
  \else
    \delimbrk*{#2}
  \fi}
\newcommand{\brc}[2][0]{%
  \ifcase#1\relax
    \delimbrc{#2}\or
    \delimbrc[\big]{#2}\or
    \delimbrc[\Big]{#2}\or
    \delimbrc[\bigg]{#2}\or
    \delimbrc[\Bigg]{#2}
  \else
    \delimbrc*{#2}
  \fi}
\newcommand{\bars}[2][0]{%
  \ifcase#1\relax
    \delimbar{#2}\or
    \delimbar[\big]{#2}\or
    \delimbar[\Big]{#2}\or
    \delimbar[\bigg]{#2}\or
    \delimbar[\Bigg]{#2}
  \else
    \delimbar*{#2}
  \fi}
\newcommand{\ang}[2][0]{%
  \ifcase#1\relax
    \delimang{#2}\or
    \delimang[\big]{#2}\or
    \delimang[\Big]{#2}\or
    \delimang[\bigg]{#2}\or
    \delimang[\Bigg]{#2}
  \else
    \delimang*{#2}
  \fi}
\newcommand{\scr}[3][0]{\ifblank{#2}{\mathstrut{\vphantom{\prns[#1]{}}}_{#3}}{\ifblank{#3}{\mathstrut{\vphantom{\prns[#1]{}}}^{#2}}{\mathstrut{\vphantom{\prns[#1]{}}}^{#2}_{#3}}}}
\def\scripts#1#2#3{\def\scripts@{\prns[#1]{#3}}\def\scripts@@{#2}\def\scripts@@@{#2}\@ifnextchar^\@sup\@nsup}
\def\@sup^#1{\def\scripts@@@{\scripts@@^{#1}}\@ifnextchar_\@supsub{\scripts@@@\scripts@}}
\def\@supsub_#1{\scripts@@@_{#1}\scripts@}
\def\@nsup{\@ifnextchar_{\@sub}{\scripts@@@\scripts@}}
\def\@sub_#1{\def\scripts@@@{\scripts@@_{#1}}\@ifnextchar^\@subsup{\scripts@@@\scripts@}}
\def\@subsup^#1{\scripts@@@^{#1}\scripts@}
\renewcommand{\1}[1]{\mathbf{1}_{#1}}
\newcommand{\ab}{\operatorname{\mathcal{A}b}}
\newcommand{\altdgnerve}[2][0]{\operatorname{\overline{\mathfrak{N}}}_{\mathrm{dg}}\ifblank{#2}{}{\prns[#1]{#2}}}
\newcommand{\calg}[2][0]{\scripts{#1}{\operatorname{\mathcal{CA}lg}}{#2}}
\newcommand{\cat}{\operatorname{\mathcal{C}at}}
\newcommand{\colim}{\operatorname*{colim}}
\newcommand{\cpx}[2][0]{\scripts{#1}{\operatorname{\mathcal{C}px}}{#2}}
\renewcommand{\d}{\operatorname{d}}
\renewcommand{\D}[2][0]{\scripts{#1}{\operatorname{\mathcal{D}}}{#2}}
\newcommand{\dgcat}{\operatorname{dg\mathcal{C}at}}
\newcommand{\dgnerve}[2][0]{\operatorname{\mathfrak{N}}_{\mathrm{dg}}\ifblank{#2}{}{\prns[#1]{#2}}}
\newcommand{\ext}[3][0]{\ifblank{#2}{\operatorname{ext}}{\scripts{#1}{\operatorname{ext}}{#2,\, #3}}}
\newcommand{\fin}{\operatorname{\mathcal{F}in}_*}
\newcommand{\fun}[3][0]{\ifblank{#2}{\operatorname{\mathcal{F}un}}{\scripts{#1}{\operatorname{\mathcal{F}un}}{#2,\, #3}}}
\newcommand{\h}{\mathfrak{h}}
\newcommand{\ho}[2][0]{\ifblank{#2}{\operatorname{ho}}{\scripts{#1}{\operatorname{ho}}{#2}}}
\renewcommand{\hom}[3][0]{\ifblank{#2}{\operatorname{hom}}{\scripts{#1}{\operatorname{hom}}{#2,\, #3}}}
\newcommand{\id}{\operatorname{id}}
\newcommand{\ind}[2][0]{\operatorname{\mathcal{I}nd}\ifblank{#2}{}{\prns[#1]{#2}}}
\newcommand{\inftyone}{\ensuremath{(\infty,1)}}
\newcommand{\K}[2][0]{\scripts{#1}{\operatorname{\mathcal{K}}}{#2}}
\newcommand{\KK}{\mathbf{K}}
\newcommand{\map}[3][0]{\ifblank{#2}{\operatorname{map}}{\scripts{#1}{\operatorname{map}}{#2,\, #3}}}
\newcommand{\mhs}{\operatorname{\mathcal{MHS}}}
\renewcommand{\mod}[2][0]{\ifblank{#2}{\operatorname{\mathcal{M}od}}{\scripts{#1}{\operatorname{\mathcal{M}od}}{#2}}}
\newcommand{\mor}[3][0]{\ifblank{#2}{\operatorname{mor}}{\scripts{#1}{\operatorname{mor}}{#2,\, #3}}}
\newcommand{\nerve}[2][0]{\operatorname{\mathfrak{N}}\ifblank{#2}{}{\prns[#1]{#2}}}
\newcommand{\on}{\operatorname}
\newcommand{\op}{^{\mathrm{op}}}
\newcommand{\qcat}{\operatorname{\mathcal{QC}at}}
\newcommand{\resp}[1]{\textup{(}resp.\@ #1\textup{)}}
\newcommand{\set}{\operatorname{\mathcal{S}et}}
\newcommand{\smod}[1]{\operatorname{s\mathcal{M}od}_{#1}\prns{\ab}}
\newcommand{\snerve}[2][0]{\operatorname{\mathfrak{N}_{\Delta}}\ifblank{#2}{}{\prns[#1]{#2}}}
\newcommand{\spc}[2][0]{\ifblank{#2}{\operatorname{\mathcal{S}pc}}{\scripts{#1}{\operatorname{\mathcal{S}pc}}{#2}}}
\newcommand{\sset}{\operatorname{\mathcal{S}et}_{\Delta}}
\newcommand{\trun}{\mathfrak{t}}
\title{\textsc{Verdier quotients of stable quasi-categories are localizations}}
\author{Brad Drew}
\date{}
\begin{document}
\maketitle

\begin{abstract}
\noindent
The Verdier quotient $\mc{T}/\mc{S}$ of a triangulated category $\mc{T}$ by a triangulated subcategory $\mc{S}$ is defined by a universal property with respect to triangulated functors out of $\mc{T}$.
However, $\mc{T}/\mc{S}$ is in fact a localization of $\mc{T}$, i.e., it is obtained from $\mc{T}$ by formally inverting a class of morphisms.
We establish the analogous result for small stable quasi-categories.
As an application, we explore the compatibility of Verdier quotients with symmetric monoidal structures.
In particular, we record a few useful elementary results on the quasi-categories associated with symmetric monoidal differential graded categories and derived categories of symmetric monoidal Abelian categories for which we were unable to locate proofs in the literature.
\end{abstract}

\tableofcontents

\subsection*{Introduction}

\noindent
In \cite[3.4]{Beilinson_absolute-hodge}, A.~Beilinson constructs a triangulated equivalence of the bounded derived category $\mr{D}^{\mr{b}}\prns{\mhs^{\mr{p}}_{\mb{Q}}}$ of rational polarizable mixed Hodge structures with the derived category $\mr{D}^{\mr{b}}_{\mc{H}^{\mr{p}}, \mb{Q}}$ of rational polarizable mixed Hodge complexes, and this equivalence has proved to be of fundamental importance in mixed Hodge theory.
In \cite{Drew_rectification-of-Deligne's}, we show that this triangulated equivalence can be enhanced to an equivalence of symmetric monoidal quasi-categories, thereby resolving several technical issues associated with the functoriality of triangulated categories.

In constructing this enhancement, however, we were unable to find proofs in the literature of some general results concerning the compatibility of Verdier quotients with symmetric monoidal structures and the passage from symmetric monoidal differential graded categories to symmetric monoidal quasi-categories. 
Specifically, $\mr{D}^{\mr{b}}_{\mc{H}}$ is the homotopy category of a Drinfeld quotient of a certain pretriangulated differential graded category $\bs{\mc{C}}$ by a full pretriangulated differential graded subcategory $\bs{\mc{A}} \subseteq \bs{\mc{C}}$, which carries a symmetric monoidal structure $\bs{\mc{C}}^{\otimes}$, which leads to the following questions:
\begin{itemize}
\item 
Does the symmetric monoidal structure $\bs{\mc{C}}^{\otimes}$ pass to the quasi-category $\dgnerve{\bs{\mc{C}}}$ associated with $\bs{\mc{C}}$?
\item
Is $\dgnerve{\bs{\mc{C}}}\scr{\otimes}{}$ a stable symmetric monoidal quasi-category?
\item
Does the Verdier quotient $\dgnerve{\bs{\mc{C}}}/\dgnerve{\bs{\mc{A}}}$ of $\dgnerve{\bs{\mc{C}}}$ by $\dgnerve{\bs{\mc{A}}}$ admit a more tractable description than as a cofiber in the quasi-category of small stable quasi-categories?
\item
Does $\dgnerve{\bs{\mc{C}}}/\dgnerve{\bs{\mc{A}}}$ inherit a natural symmetric monoidal structure?
\end{itemize} 
The answer in each case, as we prove below, is affirmative.
Of course, essentially none of this is specific to the differential graded category of mixed Hodge complexes and these questions are certain to arise in other contexts.
Given the general utility of these elementary results, they have been collected here in the hopes that others may find them and find uses for them.
 
\subsection*{Organization}

In \S1, we identify the Verdier quotient $\mc{C}/\mc{A}$ of a $\mf{U}$-small stable quasi-category $\mc{C}$ by a stable sub-quasi-category $\mc{A}$ with the localization of $\mc{C}$ with respect to the class $S$ of morphisms whose cofibers belong to $\mc{A}$ (\ref{thm.3}).
There are important properties easily established for $\mc{C}\brk{S^{-1}}$, such as its compatibility with well-behaved symmetric monoidal structures, that are otherwise difficult to prove directly for $\mc{C}/\mc{A}$;
it is in such cases that the utility of \ref{thm.3} manifests itself.
In \S2, we consider the particular case of quasi-categories arising from pretriangulated symmetric monoidal differential graded categories (\ref{dg.6}).

In \S3, we specialize further to the case of the derived category of a well-behaved symmetric monoidal Abelian category.
We also compare the construction of the bounded derived quasi-category $\D{\mb{A}}^{\mr{b}}$ of an Abelian category $\mb{A}$ as a Verdier quotient of the corresponding quasi-category of bounded cochain complexes up to cochain homotopy equivalence by the sub-quasi-category spanned by the acyclic complexes with that of the unbounded derived quasi-category $\D{\ind{\mb{A}}}$ of the category $\ind{\mb{A}}$ of ind-objects of $\mb{A}$ as the quasi-category corresponding to the injective model structure.
Specifically, we show that the former is a full sub-quasi-category of the latter when $\mb{A}$ is a Noetherian category (\ref{ab.5}).
In \S4, we develop \ref{ab.5} further under the assumption that $\mb{A}$ is also of finite homological dimension by characterizing the $\aleph_0$-presentable objects of $\D{\ind{\mb{A}}}$ and by showing that $\ind{\D{\mb{A}}^{\mr{b}}} \simeq \D{\ind{\mb{A}}}$ in this case.

\subsection*{Notation and conventions}

\noindent
\textsc{Grothendieck universes:}
We assume that each set is an element of a Grothendieck universe.
In particular, in the sequel, $\mf{U} \in \mf{V}$ denote fixed Grothendieck universes such that $\set$, $\ab$ and $\cat$, the categories of $\mf{U}$-sets, $\mf{U}$-small Abelian groups and $\mf{U}$-small categories, respectively, are $\mf{V}$-small.
All commutative rings and schemes will be $\mf{U}$-small unless context indicates otherwise.

\vspace{.25\baselineskip}

\noindent
\textsc{Quasi-categories:}
We freely employ the language of quasi-categories and higher algebra as developed in \cite{Lurie_higher-topos, Lurie_higher-algebra}.
We call \emph{qcategory} 
that which is elsewhere referred to variously as ``quasi-category'', ``quategory'', ``$\infty$-category'', ``\inftyone-category'' or ``weak Kan complex'' (\cite[1.1.2.4]{Lurie_higher-topos}).
While some of the avant-garde have already dubbed them simply ``categories'', a vestigial silent ``q'' will, I hope, strike a balance between ambiguity and polysyllabicism.

\vspace{.25\baselineskip}

\noindent
\textsc{Categories qua qcategories:}
We regard all categories as qcategories by tacitly taking their nerves.
As justification for this convention, observe that the nerve functor $\nerve{}: \cat \to \sset$ is right Quillen with respect to the model structure on $\cat$ whose weak equivalences and fibrations are the equivalences of categories and the isofibrations, respectively, and the Joyal model structure on the category $\sset$ of simplicial $\mf{U}$-sets, and the functor induced between the qcategories underlying these model structures is fully faithful (\cite[2.8]{Joyal_notes-on-quasi-categories}).

\vspace{.25\baselineskip}

\noindent
\textsc{Functors:}
We say that a functor $F: \mc{C} \to \mc{D}$ between qcategories is \emph{$\mf{U}$-continuous} \resp{\emph{$\mf{U}$-cocontinuous}} if it preserves $\mf{U}$-limits \resp{$\mf{U}$-colimits}, i.e., limits \resp{colimits} of $\mf{U}$-small diagrams.
We also write $F \dashv G$ to indicate that the functor $F: \mc{C} \to \mc{D}$ is left adjoint to the functor $G: \mc{D} \to \mc{C}$ (\cite[5.2.2.1]{Lurie_higher-topos}).

\vspace{.25\baselineskip}

\noindent
\textsc{Presentability:}
Let $\kappa$ denote an infinite regular $\mf{U}$-cardinal.
We preserve the terminology from the theory of $1$-categories (\cite{Adamek-Rosicky_locally-presentable}) and refer to an object $X$ of a qcategory $\mc{C}$ as \emph{$\kappa$-presentable} if $\map{X}{-}_{\mc{C}}: \mc{C} \to \spc{}$ preserves $\kappa$-filtered colimits, i.e., if it is ``$\kappa$-compact'' in the sense of \cite[5.3.4.5]{Lurie_higher-topos}.
We say that the qcategory $\mc{C}$ is \emph{locally $\mf{U}$-presentable} \resp{\emph{locally $\kappa$-presentable}} if it is ``presentable'' \resp{``$\kappa$-compactly generated''} in the sense of \cite[5.5.0.18, 5.5.7.1]{Lurie_higher-topos}.

\vspace{.25\baselineskip}

\noindent
\textsc{Localization:}
If $\mc{C}$ is a $\mf{U}$-small qcategory and $\mf{W}$ a class of morphisms of $\mc{C}$, then there exists a functor $\lambda: \mc{C}  \to \mc{C}\brk{\mf{W}^{-1}}$ with the universal property that, for each $\mf{U}$-small qcategory $\mc{D}$, composition with $\lambda$ induces a fully faithful functor $\fun{\mc{C}\brk{\mf{W}^{-1}}}{\mc{D}} \hookrightarrow \fun{\mc{C}}{\mc{D}}$ whose essential image $\fun{\mc{C}}{\mc{D}}_{\mf{W}}$ is spanned by those functors that send each element of $\mf{W}$ to an equivalence in $\mc{D}$ (\cite[1.3.4.2]{Lurie_higher-algebra}).
We refer to $\lambda$ or, abusively, $\mc{C}\brk{\mf{W}^{-1}}$, as a \emph{localization of $\mc{C}$ with respect to $\mf{W}$}.
If $\lambda$ admits a fully faithful right adjoint $\iota$, then we say that $\lambda$ is a \emph{reflective localization of $\mc{C}$}.
This applies in particular to the locally presentable setting:
if $\mc{C}$ is a locally $\mf{U}$-presentable category and $S$ is a $\mf{U}$-set of morphisms of $\mc{C}$, then the localization $\lambda: \mc{C} \to \mc{C} \brk{S^{-1}}$ is reflective, the essential image of its right adjoint is the full subqcategory spanned by the $S$-local objects, i.e., the objects $X \in \mc{C}$ such that, for each $f \in S$, the morphism $\map{f}{X}_{\mc{C}}$ is a weak homotopy equivalence, and $\mc{C}\brk{S^{-1}}$ is locally $\mf{U}$-presentable (\cite[5.5.4.15, 5.5.4.20]{Lurie_higher-topos}).

\vspace{.25\baselineskip}

\noindent
\textsc{Symmetric monoidal qcategories:}
A symmetric monoidal qcategory is a coCartesian fibration $p: \mc{C}^{\otimes} \to \fin$ such that the morphisms $\rho^i: \ang{n} \to \ang{1}$ given by 
$
\rho^i\prns{j}
:=
1$ if $i = j$ and 
$
\rho^i\prns{j} = *$ if
$i \neq j$
induce functors $\rho^i_!: \mc{C}^{\otimes}_{\ang{n}} \to \mc{C}^{\otimes}_{\ang{1}}$ which in turn induce equivalences $\mc{C}^{\otimes}_{\ang{n}} \simeq \prns{\mc{C}^{\otimes}_{\ang{1}}}^n$ (\cite[2.0.0.7]{Lurie_higher-algebra}).
We systematically suppress the fibration $p$ from the notation, referring to ``the symmetric monoidal qcategory $\mc{C}^{\otimes}$''.
We also refer to $\mc{C} := \mc{C}^{\otimes}_{\ang{1}}$ as the \emph{qcategory underlying $\mc{C}^{\otimes}$}.
Similarly, we use the notation $F^{\otimes}: \mc{C}^{\otimes} \to \mc{D}^{\otimes}$ for a possibly lax symmetric monoidal functor and $F: \mc{C} \to \mc{D}$ for the underlying functor.

Appealing to \cite[2.4.2.6]{Lurie_higher-algebra}, the category $\calg{\qcat^{\times}}$ of commutative algebra objects of $\qcat^{\times}$ is a convenient model for the qcategory of $\mf{U}$-small symmetric monoidal qcategories:
its objects correspond to $\mf{U}$-small symmetric monoidal qcategories and its morphisms to symmetric monoidal functors.
Similarly, by \cite[5.4.7]{Lurie_DAGVIII} and \cite[4.8.1.9]{Lurie_higher-algebra}, $\qcat^{\mr{Ex}}$ admits a symmetric monoidal structure $\qcat^{\mr{Ex}, \otimes}$ such that $\calg{\qcat^{\mr{Ex}, \otimes}}$ models the qcategory of $\mf{U}$-small stable qcategories.

\vspace{.25\baselineskip}

\noindent
\textsc{Qcategories underlying model categories:}
Essentially all the model categories appearing in the sequel will prove to be \emph{$\mf{U}$-combinatorial}, i.e., cofibrantly generated model categories with locally $\mf{U}$-presentable (\cite[1.8]{Beke_sheafifiable-homotopy}, \cite[\S A.2.6]{Lurie_higher-topos}, \cite[1.21]{Barwick_left-and-right}) underlying categories.

If $\mb{M}$ is a $\mf{U}$-combinatorial model category and $\mf{W}$ is its class of weak equivalences, then its \emph{underlying qcategory $\mb{M}\brk{\mf{W}^{-1}}$} is locally $\mf{U}$-presentable (\cite[1.3.4.15, 1.3.4.16]{Lurie_higher-algebra}).
If $\mb{M}^{\otimes}$ is a symmetric monoidal $\mf{U}$-combinatorial model category, then it admits an \emph{underlying locally $\mf{U}$-presentable symmetric monoidal qcategory $\mb{M}\brk{\mf{W}^{-}}\scr{\otimes}{}$} (\cite[4.1.3.6, 4.1.4.8]{Lurie_higher-algebra}).
Strictly speaking, the qcategory underlying the symmetric monoidal qcategory $\mb{M}\brk{\mf{W}^{-1}}\scr{\otimes}{}$ is defined to be the full subqcategory of $\mb{M}\brk{\mf{W}^{-1}}$ spanned by the cofibrant objects of $\mb{M}$, but the inclusion of this full subqcategory is an equivalence as each object of $\mb{M}$ is weakly equivalent to a cofibrant object.

\vspace{\baselineskip}

\noindent
\textsc{Notation:}
While we maintain most of the notations of \cite{Lurie_higher-topos,Lurie_higher-algebra}, the following list specifies the notable deviations therefrom and other frequently recurring symbols.

\renewcommand{\arraystretch}{1.2}

\begin{longtable}[c]{p{1.1in}>{\raggedright\arraybackslash}p{4.6in}}
$\mc{C}_{\aleph_0}$ & the full subqcategory of $\mc{C}$ spanned by the $\aleph_0$-presentable objects (\cite[5.3.4.5]{Lurie_higher-topos})
\\
$\mc{C}^{\times}$ & the Cartesian symmetric monoidal structure on the qcategory $\mc{C}$ (\cite[2.4.1.1]{Lurie_higher-algebra})
\\
$\mc{C}\brk{\mf{W}^{-1}}$ & a localization of the qcategory $\mc{C}$ with respect to the class of morphisms $\mf{W}$ (\cite[1.3.4.1]{Lurie_higher-algebra}) 
\\
$\ab$ & the category of $\mf{U}$-small Abelian groups
\\
$\calg{\mc{C}^{\otimes}}$ & the qcategory of commutative algebras in the symmetric monoidal qcategory $\mc{C}^{\otimes}$ (\cite[2.1.3.1]{Lurie_higher-algebra})
\\
$\fun{\mc{C}}{\mc{D}}_S$ & the full subqcategory of $\fun{\mc{C}}{\mc{D}}$ spanned by the functors that send each element of the set $S$ of morphisms of $\mc{C}$ an equivalence in $\mc{D}$
\\
$\fun{\mc{C}^{\otimes}}{\mc{D}^{\otimes}}^{\otimes}$ & the qcategory of symmetric monoidal functors $F^{\otimes}: \mc{C}^{\otimes} \to \mc{D}^{\otimes}$ (\cite[2.1.3.7]{Lurie_higher-algebra}) 
\\
$\h^r$ & the degree $r$ cohomology functor $\trun^{\leq r}\trun^{\geq r}: \mc{C} \to \mc{C}^{\heartsuit}$ of a t-structure on the stable qcategory $\mc{C}$ (\cite[1.2.1.4]{Lurie_higher-algebra})
\\
$\ho{\mc{C}}$ & the homotopy category of the qcategory $\mc{C}$ (\cite[1.2.3]{Lurie_higher-topos})
\\
$\ind{\mc{C}}$ & the ind-completion of the qcategory $\mc{C}$ (\cite[5.3.5.1]{Lurie_higher-topos})
\\
$\map{X}{Y}_{\mc{C}}$ & the mapping space between two objects $X$ and $Y$ of the qcategory $\mc{C}$ (\cite[1.2.2]{Lurie_higher-topos})
\\
$\mod{\mc{C}}_A$ & the qcategory of modules over $A \in \calg{\mc{C}^{\otimes}}$ (\cite[4.5.1.1]{Lurie_higher-algebra})
\\
$\nerve{\mc{C}}$ & the nerve of the category $\mc{C}$
\\
$\dgnerve{\bs{\mc{C}}}$, $\altdgnerve{\bs{\mc{C}}}$ & two constructions of the differential graded nerve of the differential graded category $\bs{\mc{C}}$ (\cite[1.3.1.6, 1.3.1.16]{Lurie_higher-algebra})
\\
$\snerve{\bs{\mc{C}}}$ & the simplicial nerve of the simplicial category $\bs{\mc{C}}$ (\cite[1.1.5.5]{Lurie_higher-topos})
\\
$\qcat$ & the qcategory of $\mf{U}$-small qcategories (\cite[3.0.0.1]{Lurie_higher-topos})
\\
$\qcat^{\mr{Ex}}$ & the qcategory of $\mf{U}$-small stable qcategories and exact functors (\cite[\S1.1.4]{Lurie_higher-algebra})
\\
$\sset$ & the category of simplicial $\mf{U}$-sets
\\
$\spc{}$ & the qcategory of $\mf{U}$-small spaces, i.e., the qcategory underlying the model structure on $\sset$ whose weak equivalences and fibrations are the weak homotopy equivalences and the Kan fibrations, respectively (\cite[1.2.16.1]{Lurie_higher-topos})
\\
$\trun^{\leq r}$, $\trun^{\geq r}$ & the truncations of a cohomological t-structure on a stable qcategory $\mc{C}$
\\
$\mf{U} \in \mf{V}$ & fixed Grothendieck universes
\end{longtable}

\renewcommand{\arraystretch}{1}

\section{Verdier quotients}

\begin{motivation}
If $\mc{C}^{\otimes}$ is a symmetric monoidal qcategory and $\mf{W}$ a class of morphisms of $\mc{C}$ such that $\id_X \otimes f \in \mf{W}$ for each $f \in \mf{W}$ and each $X \in \mc{C}$, then one can promote the localization $\lambda: \mc{C} \to \mc{C}\brk{\mf{W}^{-1}}$ to a symmetric monoidal functor (\cite[4.1.3.4]{Lurie_higher-algebra}).
If, however, $\mc{C}^{\otimes}$ is a \emph{stable} symmetric monoidal qcategory, it is perhaps more natural to ask if the Verdier quotient $q: \mc{C} \to \mc{C}/\mc{A}$ of $\mc{C}$ by a stable subqcategory $\mc{A}$ underlies a symmetric monoidal functor.
As we show in \ref{thm.3}, Verdier quotients are in fact localizations with respect to certain classes of morphisms.
Thus, the compatibility of a symmetric monoidal structure with a given Verdier quotient is equivalent to its compatibility with the corresponding localization.

The analogue of this useful result identifying Verdier quotients with localizations is well known in the setting of triangulated categories (\cite[Chapitre II, 2.2.11]{Verdier_des-categories}).
An analogue of this statement for certain Verdier quotients of stable, locally $\mf{U}$-presentable qcategories was established by A.~Blumberg, D.~Gepner and G.~Tabuada in \cite[5.7]{Blumberg-Gepner-Tabuada_universal-characterization}.
The proof of \ref{thm.3} consists of passing to the locally $\mf{U}$-presentable setting of \cite[5.7]{Blumberg-Gepner-Tabuada_universal-characterization} by taking ind-completions and verifying that this process is compatible with Yoneda embeddings.
\end{motivation}

\begin{summary}
Lemma \ref{thm.2} is a general result concerning the compatibility of ind-completions with localizations.
It allows us to prove the key result of this section, Theorem \ref{thm.3}, which in turn resolves the aforementioned problem of compatibility between Verdier quotients and symmetric monoidal structures (\ref{thm.4}).
\end{summary}

\begin{defn}
\label{thm.1}
If $\mc{C}$ is a $\mf{U}$-small stable qcategory and $\iota: \mc{A} \hookrightarrow \mc{C}$ the inclusion of a stable subqcategory, then the \emph{Verdier quotient $\mc{C}/\mc{A}$ of $\mc{C}$ by $\mc{A}$} is the cofiber $\on{cofib}\prns{\iota}$ in the qcategory $\qcat^{\mr{Ex}}$ of $\mf{U}$-small stable qcategories and exact functors.
\end{defn}

\begin{lemma}
\label{thm.2}
Let $\mc{C}$ be a $\mf{U}$-small qcategory, $S$ a set of morphisms of $\mc{C}$ and $T$ the image of $S$ under the Yoneda embedding $\mc{C} \hookrightarrow \ind{\mc{C}}$.
The canonical functor $\ind{\mc{C}\brk{S^{-1}}} \to \ind{\mc{C}}\brk{T^{-1}}$ is an equivalence.
\end{lemma}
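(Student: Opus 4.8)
The plan is to exhibit the canonical comparison functor as a localization and then match universal properties. First I would observe that $\ind{\mc{C}}$ is locally $\mf{U}$-presentable, so $\ind{\mc{C}}\brk{T^{-1}}$ makes sense as a localization in the sense recalled in the Notation; we are \emph{not} asserting it is a reflective localization, since $T$ need not be a $\mf{U}$-set. Both sides receive a canonical functor from $\mc{C}$: on the left via $\mc{C} \to \mc{C}\brk{S^{-1}} \to \ind{\mc{C}\brk{S^{-1}}}$, and on the right via $\mc{C} \hookrightarrow \ind{\mc{C}} \to \ind{\mc{C}}\brk{T^{-1}}$. The canonical functor $\Phi\colon \ind{\mc{C}\brk{S^{-1}}} \to \ind{\mc{C}}\brk{T^{-1}}$ is the one obtained by noting that $\ind{\mc{C}} \to \ind{\mc{C}}\brk{T^{-1}}$ inverts the image of $S$, hence by the universal property of $\mc{C}\brk{S^{-1}}$ factors through $\mc{C}\brk{S^{-1}}$, and then extends to ind-completions by $\mf{U}$-cocontinuity (using that $\ind{(-)}$ is the free cocompletion under $\mf{U}$-filtered colimits, \cite[5.3.5.10]{Lurie_higher-topos}).

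The key step is to verify that $\Phi$ satisfies the universal property characterizing $\ind{\mc{C}}\brk{T^{-1}}$. So let $\mc{D}$ be a $\mf{U}$-small qcategory; I want to show $\Phi^* \colon \fun{\ind{\mc{C}}\brk{T^{-1}}}{\mc{D}} \to \fun{\ind{\mc{C}\brk{S^{-1}}}}{\mc{D}}$ is an equivalence. Unwinding, it suffices to produce compatible natural equivalences of the two full subqcategories of $\fun{\ind{\mc{C}}}{\mc{D}}$, respectively $\fun{\ind{\mc{C}\brk{S^{-1}}}}{\mc{D}}$, cut out by ``inverts $T$'' and ``inverts $S$''. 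The mechanism is: restriction along the Yoneda embedding $\mc{C} \hookrightarrow \ind{\mc{C}}$ identifies $\fun{\ind{\mc{C}}}{\mc{D}}$ with $\fun{\mc{C}}{\mc{D}}$ once one restricts on the left to $\mf{U}$-filtered-colimit-preserving functors — but $\mc{D}$ is merely a qcategory, not cocomplete, so I should instead work one universe up or, more cleanly, replace $\mc{D}$ by $\ind{\mc{D}}$ and track essential images. Concretely: a functor $\ind{\mc{C}} \to \mc{D}$ inverting $T$ is, by the universal property of localization, the same as a functor $\ind{\mc{C}}\brk{T^{-1}} \to \mc{D}$; precomposing with $\Phi$ lands in functors $\ind{\mc{C}\brk{S^{-1}}} \to \mc{D}$, and I must show this assignment is essentially surjective and fully faithful. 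Fully faithfulness is automatic since both are full subqcategories of $\fun{\ind{\mc{C}}}{\mc{D}}$ (via the localizations $\ind{\mc{C}} \twoheadrightarrow \ind{\mc{C}}\brk{T^{-1}}$ and $\ind{\mc{C}} \to \ind{\mc{C}\brk{S^{-1}}}$, the latter because $\ind{(-)}$ applied to a localization $\mc{C} \to \mc{C}\brk{S^{-1}}$ is again a localization by \cite[5.3.6.2]{Lurie_higher-topos} or a direct check). For essential surjectivity I use that a functor $\ind{\mc{C}} \to \mc{D}$ factors through $\ind{\mc{C}\brk{S^{-1}}}$ iff it inverts the image of $S$ under $\mc{C}\to\ind{\mc{C}}$, which is exactly $T$; hence the two classes of functors $\ind{\mc{C}} \to \mc{D}$ coincide, and since $\Phi$ sits over $\ind{\mc{C}}$ compatibly, $\Phi^*$ is an equivalence.

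The main obstacle I anticipate is the bookkeeping around size and the precise meaning of ``localization'' when $T$ is a proper class of morphisms rather than a set: the universal property quoted in the Notation is stated for a set $S$ and a $\mf{U}$-small target, so I need to confirm that the relevant statement of \cite[1.3.4.2]{Lurie_higher-algebra} still applies, or else argue via $\mf{V}$-small categories and then descend. A clean way around this is to note that $T$ is the image of the $\mf{U}$-set $S$, so $\ind{\mc{C}}\brk{T^{-1}}$ can be presented using only the set $S$, keeping everything honestly small; alternatively one invokes that $\ind{\mc{C}}$ is locally $\mf{U}$-presentable and $S$ is a $\mf{U}$-set, whence the localization is reflective and all size issues evaporate. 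The only other point requiring care is the identification $\ind{\mc{C}\brk{S^{-1}}} \simeq \ind{\mc{C}}\brk{T^{-1}}$ at the level of the comparison \emph{functor} $\Phi$ rather than just an abstract equivalence; this is handled by constructing $\Phi$ explicitly as above so that it is manifestly a functor under $\mc{C}$, and then the universal-property argument pins it down up to contractible choice.
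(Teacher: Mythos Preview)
Your high-level strategy --- identifying both sides by matching universal properties against test qcategories --- is the paper's, and you even name the right mechanism in passing: restriction along $\mc{C} \hookrightarrow \ind{\mc{C}}$ identifies $\aleph_0$-filtered-colimit-preserving functors out of $\ind{\mc{C}}$ with arbitrary functors out of $\mc{C}$. But you then set that mechanism aside, and the detailed argument acquires a genuine gap.

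By testing against arbitrary $\mc{D}$ and arbitrary functors, you are forced to assert that $\ind{\lambda}: \ind{\mc{C}} \to \ind{\mc{C}\brk{S^{-1}}}$ is itself a localization. The reference \cite[5.3.6.2]{Lurie_higher-topos} does not establish this, and no direct check is supplied. More seriously, your essential-surjectivity step claims that an arbitrary functor $\ind{\mc{C}} \to \mc{D}$ factors through $\ind{\mc{C}\brk{S^{-1}}}$ as soon as it inverts $T$; combined with the universal property of $\ind{\mc{C}}\brk{T^{-1}}$, that is exactly the statement of the lemma, so the argument is circular. The fix is the one you yourself gesture toward but abandon: restrict to locally $\mf{U}$-presentable targets $\mc{D}$ and to $\aleph_0$-filtered-colimit-preserving functors. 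Then
\[
\fun{\ind{\mc{C}\brk{S^{-1}}}}{\mc{D}}^{\aleph_0}
\simeq
\fun{\mc{C}\brk{S^{-1}}}{\mc{D}}
\simeq
\fun{\mc{C}}{\mc{D}}_S
\simeq
\fun{\ind{\mc{C}}}{\mc{D}}^{\aleph_0}_T
\simeq
\fun{\ind{\mc{C}}\brk{T^{-1}}}{\mc{D}}^{\aleph_0}
\]
follows immediately from the universal properties of $\ind{}$ and of localization; the last step uses that $T$, as the image of the $\mf{U}$-set $S$, is a $\mf{U}$-set, so $\ind{\mc{C}}\brk{T^{-1}}$ is presentable and the localization functor preserves colimits. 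This is precisely the paper's four-line proof, and it dissolves the size worries of your final paragraph as well.
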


\begin{proof}
If $\mc{A}$ and $\mc{B}$ are qcategories admitting $\aleph_0$-filtered $\mf{U}$-colimits, let $\fun{\mc{A}}{\mc{B}}^{\aleph_0} \subseteq \fun{\mc{A}}{\mc{B}}$ denote the full subqcategory spanned by the functors $F: \mc{A} \to \mc{B}$ preserving $\aleph_0$-filtered $\mf{U}$-colimits. 
If $\mc{A}$ and $\mc{B}$ are qcategories and $W$ is a class of morphisms of $\mc{A}$, we let $\fun{\mc{A}}{\mc{B}}_W \subseteq \fun{\mc{A}}{\mc{B}}$ denote the full subqcategory spanned by the functors $F: \mc{A} \to \mc{B}$ that send each element of $W$ to an equivalence of $\mc{B}$.
We also define $\fun{\mc{A}}{\mc{B}}^{\aleph_0}_W := \fun{\mc{A}}{\mc{B}}^{\aleph_0} \cap \fun{\mc{A}}{\mc{B}}_W$ when the latter is defined.
For each locally $\mf{U}$-presentable qcategory $\mc{D}$, we have natural equivalences
\begin{align*}
\fun{\ind{\mc{C}\brk{S^{-1}}}}{\mc{D}}^{\aleph_0}
&\simeq
\fun{\mc{C}\brk{S^{-1}}}{\mc{D}}
\simeq
\fun{\mc{C}}{\mc{D}}_S
\\
&\simeq
\fun{\ind{\mc{C}}}{\mc{D}}^{\aleph_0}_T
\simeq
\fun{\ind{\mc{C}}\brk{T^{-1}}}{\mc{D}}^{\aleph_0}.
\end{align*}
As $\ind{\mc{C}\brk{S^{-1}}}$ and $\ind{\mc{C}}\brk{T^{-1}}$ thus corepresent the same functor, they are equivalent.
\end{proof}

\begin{thm}
\label{thm.3}
Let $\iota: \mc{A} \hookrightarrow \mc{C}$ denote the inclusion of a stable subqcategory of the $\mf{U}$-small stable qcategory $\mc{C}$ and $S$ the $\mf{U}$-set of morphisms $f: X \to Y$ of $\mc{C}$ such that $\on{cofib}\prns{f} \in \mc{A}$.
The canonical functor $\phi: \mc{C}\brk{S^{-1}} \to \mc{C}/\mc{A}$ is an equivalence.
In particular, $\mc{C}\brk{S^{-1}}$ is stable.
\end{thm}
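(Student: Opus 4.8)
The plan is to pass to the locally $\mf{U}$-presentable setting by taking ind-completions, where the analogous statement is available from \cite[5.7]{Blumberg-Gepner-Tabuada_universal-characterization}, and then to descend to the $\mf{U}$-small setting using the full faithfulness of Yoneda embeddings together with Lemma \ref{thm.2}.

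First I would produce the functor $\phi$ and reduce to a statement of full faithfulness after ind-completion. Since $\mc{C}/\mc{A}$ is the cofiber of $\iota$ in $\qcat^{\mr{Ex}}$, the composite $\mc{A} \hookrightarrow \mc{C} \to \mc{C}/\mc{A}$ factors through the zero qcategory, so each object of $\mc{A}$ is carried to a zero object of $\mc{C}/\mc{A}$; consequently, for $f \in S$ the image of $f$ in $\mc{C}/\mc{A}$ has zero cofiber and is therefore an equivalence. The universal property of localization then yields a factorization $\mc{C} \to \mc{C}\brk{S^{-1}} \xrightarrow{\phi} \mc{C}/\mc{A}$, and $\phi$ is the canonical functor of the statement. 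As $\mc{C} \to \mc{C}\brk{S^{-1}}$ and $\mc{C} \to \mc{C}/\mc{A}$ are both essentially surjective, so is $\phi$, and it remains only to prove $\phi$ fully faithful; since $\mc{D} \hookrightarrow \ind{\mc{D}}$ is fully faithful for every $\mf{U}$-small qcategory $\mc{D}$, it suffices to show that $\ind{\phi}\colon \ind{\mc{C}\brk{S^{-1}}} \to \ind{\mc{C}/\mc{A}}$ is an equivalence.

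The qcategory $\ind{\mc{C}}$ is stable and locally $\mf{U}$-presentable; the inclusion $\ind{\mc{A}} \hookrightarrow \ind{\mc{C}}$ is fully faithful and $\mf{U}$-cocontinuous, and its essential image is the localizing subqcategory of $\ind{\mc{C}}$ generated by the image of $\mc{A}$ under the Yoneda embedding, which is a $\mf{U}$-set of $\aleph_0$-presentable objects; moreover that subqcategory is already generated under $\mf{U}$-colimits by the cofibers of the morphisms of $T$, since $S$, and hence $T$, is closed under suspension and desuspension because $\mc{A}$ is stable. Thus \cite[5.7]{Blumberg-Gepner-Tabuada_universal-characterization} applies: the Verdier quotient $\ind{\mc{C}}/\ind{\mc{A}}$ is the reflective localization of $\ind{\mc{C}}$ at the class of morphisms whose cofiber lies in $\ind{\mc{A}}$, and this localization coincides with $\ind{\mc{C}}\brk{T^{-1}}$ because the cofibers of $T$ generate $\ind{\mc{A}}$ under $\mf{U}$-colimits; in particular $\ind{\mc{C}}\brk{T^{-1}}$ is stable, and its full subqcategory of $\aleph_0$-presentable objects is identified with the idempotent completion of $\mc{C}/\mc{A}$, whence $\ind{\mc{C}/\mc{A}} \simeq \ind{\mc{C}}/\ind{\mc{A}} \simeq \ind{\mc{C}}\brk{T^{-1}}$, compatibly with the functors out of $\ind{\mc{C}}$. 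On the other hand, Lemma \ref{thm.2} supplies an equivalence $\ind{\mc{C}\brk{S^{-1}}} \simeq \ind{\mc{C}}\brk{T^{-1}}$, again compatibly with the functors out of $\ind{\mc{C}}$. Under these identifications $\ind{\phi}$ becomes a $\mf{U}$-cocontinuous endofunctor of $\ind{\mc{C}}\brk{T^{-1}}$ compatible with the localization functor $\ind{\mc{C}} \to \ind{\mc{C}}\brk{T^{-1}}$, so the universal property of that localization forces $\ind{\phi}$ to be equivalent to the identity. Hence $\ind{\phi}$ is an equivalence, $\phi$ is fully faithful, and therefore $\phi$ is an equivalence; in particular $\mc{C}\brk{S^{-1}} \simeq \mc{C}/\mc{A}$ is stable.

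The main obstacle is the coherence bookkeeping in the preceding paragraph: one must verify that the equivalence furnished by Lemma \ref{thm.2}, the identification of $\ind{\mc{C}}\brk{T^{-1}}$ with the presentable Verdier quotient $\ind{\mc{C}}/\ind{\mc{A}}$, and the identification $\ind{\mc{C}/\mc{A}} \simeq \ind{\mc{C}}/\ind{\mc{A}}$ obtained through $\aleph_0$-presentable objects are all compatible with the canonical functors out of $\ind{\mc{C}}$, so that full faithfulness genuinely transports along $\ind{\phi}$ down to $\phi$ itself; this is the compatibility with Yoneda embeddings on which the descent rests. A secondary but indispensable point is checking the hypotheses of \cite[5.7]{Blumberg-Gepner-Tabuada_universal-characterization} --- that $\ind{\mc{A}}$ is a localizing subqcategory of $\ind{\mc{C}}$ generated by a $\mf{U}$-set of $\aleph_0$-presentable objects, and that the quotient is computed there as the localization at the saturated class of morphisms with cofiber in $\ind{\mc{A}}$ --- along with the elementary observation that this localization equals $\ind{\mc{C}}\brk{T^{-1}}$.
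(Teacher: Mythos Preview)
Your proposal is correct and follows essentially the same route as the paper: construct $\phi$ via the universal property of localization, deduce essential surjectivity from that of $\pi$, and obtain full faithfulness by passing to ind-completions via the commutative square with Yoneda embeddings, invoking Lemma~\ref{thm.2} together with the Blumberg--Gepner--Tabuada results to see that $\ind{\phi}$ is an equivalence. The paper compresses your middle paragraph into a single citation of \cite[5.7, 5.13]{Blumberg-Gepner-Tabuada_universal-characterization} (the latter supplying the identification $\ind{\mc{C}/\mc{A}} \simeq \ind{\mc{C}}/\ind{\mc{A}}$ that you unpack via $\aleph_0$-presentable objects), and does not dwell on the coherence bookkeeping you flag, but the architecture is identical.
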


\begin{proof}
The canonical functor $\pi: \mc{C} \to \mc{C}/\mc{A}$ sends each element of $S$ to an equivalence:
a morphism of a stable qcategory is an equivalence if and only if its cofiber is a zero object.
Applying this observation and the universal property of the localization $\mc{C}\brk{S^{-1}}$, we obtain the functor $\phi$.
We have a homotopy commutative square
\[
\xymatrix{
\mc{C}\brk{S^{-1}}
\ar[r]_-{\phi}
\ar@{^{(}->}[d]
&
\mc{C}/\mc{A}
\ar@{^{(}->}[d]
\\
\ind{\mc{C}\brk{S^{-1}}}
\ar[r]^-{\ind{\phi}}_-{\sim}
&
\ind{\mc{C}/\mc{A}}
}
\]
in which the vertical arrows are the Yoneda embeddings and $\ind{\phi}$ is an equivalence by \ref{thm.2} and \cite[5.7, 5.13]{Blumberg-Gepner-Tabuada_universal-characterization}, so $\phi$ is fully faithful.
Since $\pi$ is essentially surjective and factors though $\phi$, $\phi$ is also essentially surjective.
\end{proof}

\begin{cor}
\label{thm.4}
Let $\mc{C}^{\otimes}$ be a $\mf{U}$-small stable symmetric monoidal qcategory, $\iota: \mc{A} \hookrightarrow \mc{C}$ a stable subqcategory such that $C \otimes A \in \mc{A}$ for each $C \in \mc{C}$ and each $A \in \mc{A}$, and $\pi: \mc{C} \to \mc{C}/\mc{A}$ the canonical functor.
Then $\pi$ underlies an exact symmetric monoidal functor $\pi^{\otimes}: \mc{C}^{\otimes} \to \prns{\mc{C}/\mc{A}}^{\otimes}$ such that, for each symmetric monoidal qcategory $\mc{D}$, the induced functor $\fun{\prns{\mc{C}/\mc{A}}^{\otimes}}{\mc{D}^{\otimes}}^{\otimes} \to \fun{\mc{C}^{\otimes}}{\mc{D}^{\otimes}}^{\otimes}$ is fully faithful and its essential image is spanned by the symmetric monoidal functors $F^{\otimes}$ sending each object of $\mc{A}$ to a zero object.
\end{cor}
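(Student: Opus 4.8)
The plan is to reduce the corollary to Theorem \ref{thm.3} together with Lurie's construction of symmetric monoidal localizations (\cite[4.1.3.4]{Lurie_higher-algebra}).  By Theorem \ref{thm.3} the canonical functor $\phi : \mc{C}\brk{S^{-1}} \to \mc{C}/\mc{A}$ is an equivalence, where $S$ is the $\mf{U}$-set of morphisms of $\mc{C}$ with cofiber in $\mc{A}$; so it suffices to equip $\mc{C}\brk{S^{-1}}$ with a symmetric monoidal structure for which the localization $\lambda : \mc{C} \to \mc{C}\brk{S^{-1}}$ underlies an exact symmetric monoidal functor enjoying the asserted universal property, and then to transport this data along $\phi$, a routine operation since a symmetric monoidal structure may be carried along any equivalence of underlying qcategories.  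In this way $\mc{C}/\mc{A}$ acquires a symmetric monoidal structure $\prns{\mc{C}/\mc{A}}^{\otimes}$, the composite $\pi \simeq \phi \circ \lambda$ is refined to a symmetric monoidal functor $\pi^{\otimes}$, and $\pi^{\otimes}$ is exact because $\pi$, being the canonical functor to the cofiber $\on{cofib}\prns{\iota}$ formed in $\qcat^{\mr{Ex}}$, is exact by Definition \ref{thm.1}.

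To invoke \cite[4.1.3.4]{Lurie_higher-algebra} I would first check that $S$ is stable under tensoring by objects of $\mc{C}$, that is, that $\id_C \otimes f \in S$ whenever $f \in S$ and $C \in \mc{C}$.  Since $\mc{C}^{\otimes}$ is a commutative algebra object of $\qcat^{\mr{Ex}, \otimes}$, the functor $C \otimes \prns{-} : \mc{C} \to \mc{C}$ is exact, so $\on{cofib}\prns{\id_C \otimes f} \simeq C \otimes \on{cofib}\prns{f}$, and the latter lies in $\mc{A}$ because $\mc{A}$ is a tensor ideal; hence $\id_C \otimes f \in S$.  Granted this, \cite[4.1.3.4]{Lurie_higher-algebra} supplies the symmetric monoidal structure on $\mc{C}\brk{S^{-1}}$, the symmetric monoidal functor lifting $\lambda$, and, for each symmetric monoidal qcategory $\mc{D}^{\otimes}$, a fully faithful embedding $\fun{\mc{C}\brk{S^{-1}}^{\otimes}}{\mc{D}^{\otimes}}^{\otimes} \hookrightarrow \fun{\mc{C}^{\otimes}}{\mc{D}^{\otimes}}^{\otimes}$ whose essential image is spanned by the symmetric monoidal functors whose underlying functor inverts $S$.

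It remains to match this essential image with the one in the statement.  Here one works with exact symmetric monoidal functors, that is, inside $\calg{\qcat^{\mr{Ex}, \otimes}}$.  I would show that an exact symmetric monoidal functor $F^{\otimes} : \mc{C}^{\otimes} \to \mc{D}^{\otimes}$ has underlying functor inverting $S$ if and only if $F$ carries every object of $\mc{A}$ to a zero object of $\mc{D}$.  If $F$ inverts $S$, then the morphism $0 \to A$ has cofiber $A \in \mc{A}$ and so belongs to $S$, whence $F\prns{A} \simeq F\prns{0}$, and $F\prns{0}$ is a zero object since $F$ is exact.  Conversely, if $F$ kills $\mc{A}$ and $f : X \to Y$ lies in $S$, then applying the exact functor $F$ to the cofiber sequence $X \to Y \to \on{cofib}\prns{f}$ exhibits $F\prns{f}$ as a morphism with vanishing cofiber $F\prns{\on{cofib}\prns{f}}$, hence an equivalence.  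Composing this reformulation with the embedding above, and with the equivalence of functor qcategories induced by $\phi$, yields the corollary.

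The step demanding the most care is this last one, and in particular the passage to the exact setting.  The equivalence between ``$F$ inverts $S$'' and ``$F$ kills $\mc{A}$'' genuinely uses that $F$ is exact --- without exactness a symmetric monoidal functor constant at the unit inverts every morphism while carrying no object to a zero object --- so one must read $\fun{-}{-}^{\otimes}$ here, as is natural throughout the stable setting, as the mapping qcategories of $\calg{\qcat^{\mr{Ex}, \otimes}}$.  Correspondingly, one should verify that precomposition with the exact, essentially surjective functor $\pi$ identifies exact symmetric monoidal functors out of $\mc{C}/\mc{A}$ with exact symmetric monoidal functors out of $\mc{C}$ that invert $S$; the substantive point here is that every cofiber sequence of $\mc{C}/\mc{A}$ is the image under $\pi$ of a cofiber sequence of $\mc{C}$, which follows from the essential surjectivity of $\pi$ together with the calculus of fractions afforded by $S$.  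Granted this, and the routine transport along $\phi$, the remaining verifications are formal.
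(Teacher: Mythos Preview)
Your approach is the paper's: identify $\mc{C}/\mc{A}$ with $\mc{C}\brk{S^{-1}}$ via Theorem~\ref{thm.3}, check that $S$ is closed under tensoring using exactness of $C\otimes\prns{-}$ together with the ideal hypothesis on $\mc{A}$, and apply \cite[4.1.3.4]{Lurie_higher-algebra}. The paper's two-sentence proof records exactly these steps and nothing more.

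You are more careful than the paper on one point. Lurie's result describes the essential image as the symmetric monoidal functors whose underlying functor \emph{inverts $S$}, and you correctly observe that for an arbitrary target $\mc{D}^{\otimes}$ this is not literally the same as sending every object of $\mc{A}$ to a zero object: the essentially unique symmetric monoidal functor from the zero stable qcategory to $\set^{\times}$ lands on the one-point set, which is terminal but not initial. The paper's proof does not address this translation; your fix---read the statement inside $\calg{\qcat^{\mr{Ex},\otimes}}$, so that targets are stable and functors exact---is the natural one and is how the corollary is actually used throughout the paper.

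Your final paragraph, however, works harder than necessary. To see that a symmetric monoidal $G : \prns{\mc{C}/\mc{A}}^{\otimes} \to \mc{D}^{\otimes}$ with stable target has exact underlying functor precisely when $G\pi$ does, no calculus of fractions is needed. Simply juxtapose the two universal properties already in hand: Theorem~\ref{thm.3} together with the universal property of localization gives $\fun{\mc{C}/\mc{A}}{\mc{D}} \simeq \fun{\mc{C}}{\mc{D}}_{S}$, while the defining universal property of the cofiber in $\qcat^{\mr{Ex}}$ (Definition~\ref{thm.1}) gives an equivalence between exact functors $\mc{C}/\mc{A}\to\mc{D}$ and exact functors $\mc{C}\to\mc{D}$ annihilating $\mc{A}$. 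Since for exact functors ``inverts $S$'' and ``kills $\mc{A}$'' coincide, the full subqcategory of exact functors matches on both sides of the first equivalence, and the same then holds after passing to commutative algebra objects.
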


\begin{proof}
With $S$ as in \ref{thm.3}, $\prns{-} \otimes \prns{-}$ sends elements of $S$ to equivalences separately in each variable by hypothesis.
The claim follows from \ref{thm.3} and \cite[4.1.3.4]{Lurie_higher-algebra}.
\end{proof}

\begin{defn}
\label{thm.5}
With the notation and hypotheses of \ref{thm.4}, we refer to $\pi^{\otimes}: \mc{C}^{\otimes} \to \prns{\mc{C}/\mc{A}}^{\otimes}$ as a \emph{symmetric monoidal Verdier quotient of $\mc{C}^{\otimes}$ by $\mc{A}$}.
\end{defn}

\section{Symmetric monoidal differential graded categories}

\begin{motivation}
There is a natural notion of a symmetric monoidal differential graded category, to wit, a differential graded category $\bs{\mc{C}}$ equipped with a differential graded tensor product bifunctor $\prns{-} \otimes \prns{-}$ and unit, associativity and commutativity constraints with respect to which it becomes a commutative monoid $\bs{\mc{C}}^{\otimes}$ in the symmetric monoidal category $\dgcat^{\otimes}_{\Lambda}$ of differential graded categories (\ref{dg.1}$(vi)$).
As mentioned in \cite[1.9.1]{Beilinson-Vologodsky_dg-guide}, however, this structure is too rigid to be compatible with Drinfeld quotients of differential graded categories.
In this section, we present a solution to this difficulty by applying results of G.~Faonte (\cite{Faonte_simplicial-nerve}) and V.~Hinich (\cite{Hinich_rectification-of-algebras}) to construct a symmetric monoidal qcategory underlying $\bs{\mc{C}}^{\otimes}$ and applying \ref{thm.4} to conclude that, under fairly general assumptions, if $\bs{\mc{C}}$ is pretriangulated and $\bs{\mc{A}} \subseteq \bs{\mc{C}}$ a pretriangulated differential graded subcategory, then the corresponding Verdier quotient of its underlying qcategory inherits a symmetric monoidal structure from $\bs{\mc{C}}^{\otimes}$.
\end{motivation}

\begin{summary}
In \ref{dg.1} and \ref{dg.2}, we recall some terminology for differential graded categories and, following \cite{Faonte_simplicial-nerve}, remark that the differential graded nerve of a pretriangulated differential graded category is a stable qcategory.
In \ref{dg.4}, \ref{dg.5}, we develop an observation of \cite{Hinich_rectification-of-algebras} concerning the compatibility of the differential graded nerve with symmetric monoidal structures.
This allows us to apply \ref{thm.4} to construct the desired symmetric monoidal Verdier quotients under suitable hypotheses.
In \ref{dg.7}, we record for future use the observation that, if $\bs{\mc{C}}^{\otimes}$ is a symmetric monoidal differential graded category, then the localization functor $\mr{Z}^0\prns{\bs{\mc{C}}} \to \mr{H}^0\prns{\bs{\mc{C}}}$ (\ref{dg.1}$(ii)$,$(iii)$) can be lifted to a symmetric monoidal functor from $\mr{Z}^0\prns{\bs{\mc{C}}}\scr{\otimes}{}$ to $\altdgnerve{\bs{\mc{C}}}\scr{\otimes}{}$, the differential graded nerve of $\bs{\mc{C}}$ with the symmetric monoidal structure of \ref{dg.6}.
\end{summary}

\begin{defn}
\label{dg.1}
Let $\Lambda$ be a unital commutative ring.
\begin{enumerate}
\item
We let $\dgcat_{\Lambda}$ denote the category of \emph{$\mf{U}$-small $\Lambda$-linear differential graded categories} and $\Lambda$-linear differential graded functors, i.e., the category of $\mf{U}$-small categories enriched in $\cpx{\mod{\ab}_{\Lambda}}$.
Given an object $\bs{\mc{C}} \in \dgcat_{\Lambda}$, we let $\hom{-}{-}^{\bullet}_{\bs{\mc{C}}} : \bs{\mc{C}}\op \times \bs{\mc{C}} \to \cpx{\mod{\ab}_{\Lambda}}$ denote $\cpx{\mod{\ab}_{\Lambda}}\scr{\otimes}{}$-enriched $\hom{}{}$-object bifunctor and, for each $n \in \mb{Z}$, we let $\hom{-}{-}^n_{\bs{\mc{C}}}: \bs{\mc{C}}\op \times \bs{\mc{C}} \to \mod{\ab}_{\Lambda}$ denote the component of $\hom{-}{-}^{\bullet}_{\bs{\mc{C}}}$ of degree $n$.
\item
The \emph{underlying category} $\mr{Z}^0\prns{\bs{\mc{C}}}$ of $\bs{\mc{C}} \in \dgcat_{\Lambda}$ is the category with the same objects as $\bs{\mc{C}}$ and morphisms given by $\mor{C}{C'}_{\mr{Z}^0\prns{\bs{\mc{C}}}} := \ker\prns{\hom{C}{C'}^{0}_{\bs{\mc{C}}} \to \hom{C}{C'}^1_{\bs{\mc{C}}}}$, i.e., the closed morphisms of degree zero.
Note that the category $\mr{Z}^0\prns{\bs{\mc{C}}}$ carries a natural $\Lambda$-linear, i.e., $\mod{\ab}_{\Lambda}\scr{\otimes}{}$-enriched, structure. 
\item
The \emph{homotopy category} $\mr{H}^0\prns{\bs{\mc{C}}}$ of $\bs{\mc{C}} \in \dgcat_{\Lambda}$ is the category with the same objects as $\bs{\mc{C}}$ and morphisms given by $\mor{C}{C'}_{\mr{H}^0\prns{\bs{\mc{C}}}} := \h^0\hom{C}{C'}^{\bullet}_{\bs{\mc{C}}}$.
Each $\Lambda$-linear differential graded functor $F:\bs{\mc{C}} \to \bs{\mc{D}}$ induces a functor $\mr{H}^0F: \mr{H}^0\prns{\bs{\mc{C}}} \to \mr{H}^0\prns{\bs{\mc{D}}}$.
\item
A $\Lambda$-linear differential graded functor $F: \bs{\mc{C}} \to \bs{\mc{D}}$ is \emph{homotopically fully faithful} if it induces a quasi-isomorphism
$
\hom{C}{C'}^{\bullet}_{\bs{\mc{C}}} 
\xrightarrow{F}
\hom{FC}{FC'}^{\bullet}_{\bs{\mc{D}}}
$
for each $\prns{C,C'} \in \bs{\mc{C}}^2$.
\item
A $\Lambda$-linear differential graded functor $F:  \bs{\mc{C}} \to \bs{\mc{D}}$ is a \emph{Dwyer-Kan equivalence} if it is homotopically fully faithful and $\mr{H}^0F$ is essentially surjective.
We let $\mf{W}_{\mr{DK}}$ denote the class of Dwyer-Kan equivalences in $\dgcat_{\Lambda}$.
\item
The category $\dgcat_{\Lambda}$ inherits a symmetric monoidal structure, denoted by $\dgcat^{\otimes}_{\Lambda}$, from $\cpx{\mod{\ab}_{\Lambda}}\scr{\otimes}{}$ in which the tensor product $\bs{\mc{C}} \otimes \bs{\mc{D}}$ of two objects is the $\Lambda$-linear differential graded category whose class of objects is the product $\bs{\mc{C}} \times \bs{\mc{D}}$ with 
\[
\hom{\prns{C, D}}{\prns{C', D'}}^{\bullet}_{\bs{\mc{C}} \otimes \bs{\mc{D}}} 
:= 
\hom{C}{C'}^{\bullet}_{\bs{\mc{C}}} \otimes \hom{D}{D'}^{\bullet}_{\bs{\mc{D}}}
\]
and with composition given by $\prns{g \otimes g'}\prns{f \otimes f'} := \prns{-1}^{\deg(g')\deg(f)}\prns{gf} \otimes \prns{g'f'}$.
A \emph{$\Lambda$-linear symmetric monoidal differential graded category} \resp{\emph{functor}} is an object \resp{morphism} of $\calg{\dgcat^{\otimes}_{\Lambda}}$.
\item
As explained in \cite[1.3.1.6, 1.3.1.11, 1.3.1.20]{Lurie_higher-algebra}, there is a \emph{differential graded nerve} functor $\dgnerve{}: \dgcat_{\Lambda}\brk{\mf{W}^{-1}_{\mr{DK}}} \to \qcat$ which assigns to each $\Lambda$-linear differential graded category $\bs{\mc{C}}$ a qcategory $\dgnerve{\bs{\mc{C}}}$ such that, in particular, $\mr{H}^0\prns{\bs{\mc{C}}} \simeq \ho{\dgnerve{\bs{\mc{C}}}}$.

We now recall a useful variant of $\dgnerve{}$, which we denote by $\altdgnerve{}: \dgcat_{\Lambda}\brk{\mf{W}^{-1}_{\mr{DK}}} \to \qcat$ and which will allow us to obtain stable symmetric monoidal qcategories from $\Lambda$-linear pretriangulated symmetric monoidal differential graded categories.
This functor $\altdgnerve{}$ is implicit in \cite[1.3.1.16]{Lurie_higher-algebra} and also studied in \cite[\S3.2]{Hinich_rectification-of-algebras} and \cite{Faonte_simplicial-nerve}.
By \cite[1.3.1.17]{Lurie_higher-algebra}, for each $\bs{\mc{C}} \in \dgcat_{\Lambda}$, there is an equivalence $\altdgnerve{\bs{\mc{C}}} \to \dgnerve{\bs{\mc{C}}}$ and hence $\mr{H}^0\prns{\bs{\mc{C}}} \simeq \ho{\altdgnerve{\bs{\mc{C}}}}$.

We have a sequence of lax symmetric monoidal right adjoint functors
\[
\cpx{\mod{\ab}_{\Lambda}}\scr{\otimes}{}
\xrightarrow{\trun^{\leq0, \otimes}}
\cpx{\mod{\ab}_{\Lambda}}^{\leq 0}\scr{\otimes}{}
\xrightarrow{N^{\otimes}}
\smod{\Lambda}^{\otimes}
\xrightarrow{\upsilon^{\otimes}}
\sset^{\times},
\]
where $\trun^{\leq0}$ is the truncation functor sending a cochain complex $M$ to the cochain complex
\[
\cdots
\to
M^{-2}
\xrightarrow{\d^{-2}}
M^{-1}
\xrightarrow{\d^{-1}}
\ker\prns{\d^0}
\to
0
\to
0
\to
\cdots,
\]
$N$ is half of the Dold-Kan correspondence and $\upsilon$ is the forgetful functor.
The composite lax monoidal functor induces a functor from the category of $\mf{U}$-small $\cpx{\mod{\ab}_{\Lambda}}\scr{\otimes}{}$-enriched categories, i.e., $\dgcat_{\Lambda}$, to that of $\mf{U}$-small $\sset^{\times}$-enriched categories. 
Unwinding the construction, one sees that this functor sends Dwyer-Kan equivalences of differential graded categories to Dwyer-Kan equivalences of $\sset^{\times}$-enriched categories.
The $\sset^{\times}$-category obtained from a $\smod{\Lambda}\scr{\otimes}{}$-category is necessarily fibrant (\cite[1.1.5.5]{Lurie_higher-topos}) and composing with the simplicial nerve functor $\snerve{}$ therefore provides a functor $\altdgnerve{}: \dgcat_{\Lambda}\brk{\mf{W}^{-1}_{\mr{DK}}} \to \qcat$.
\item
An object $\bs{\mc{C}} \in \dgcat_{\Lambda}$ is \emph{pretriangulated} if its image under the $\cpx{\mod{\ab}_{\Lambda}}\scr{\otimes}{}$-enriched Yoneda embedding is stable up to homotopy equivalence under translations and mapping cones of closed morphisms of degree zero.
We let $\dgcat^{\mr{pretr}}_{\Lambda} \subseteq \dgcat_{\Lambda}$ denote the full subcategory spanned by the pretriangulated differential graded categories and we abusively let $\dgcat^{\mr{pretr}}_{\Lambda}\brk{\mf{W}^{-1}_{\mr{DK}}}$ denote its localization with respect to $\dgcat^{\mr{pretr}}_{\Lambda} \cap \mf{W}_{\mr{DK}}$.

By \cite[Proposition 1]{Bondal-Kapranov_enhanced-triangulated}, if $\bs{\mc{C}}$ is pretriangulated, $\mr{H}^0\prns{\bs{\mc{C}}}$ admits a natural triangulated-category structure.
The distinguished triangles are given by the \emph{cofiber sequences}, i.e., the sequences of the form $X \xrightarrow{f} Y \to \on{cone}\prns{f} \to X\brk{1}$, with $f: X \to Y$ a closed morphism of degree zero.
\end{enumerate}
\end{defn}

\begin{thm}
[{\cite[4.3.1]{Faonte_simplicial-nerve}}]
\label{dg.2}
If $\KK$ is a field, then $\altdgnerve{}$ induces a functor $\dgcat^{\mr{pretr}}_{\KK}\brk{\mf{W}^{-1}_{\mr{DK}}} \to \qcat^{\mr{Ex}}$ and, for each $\bs{\mc{C}} \in \dgcat^{\mr{pretr}}_{\KK}$, the equivalence $\mr{H}^0\prns{\bs{\mc{C}}} \simeq \ho{\altdgnerve{\bs{\mc{C}}}}$ underlies an equivalence of triangulated categories.
\end{thm}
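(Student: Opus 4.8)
The plan is to work directly with the variant nerve $\altdgnerve{\bs{\mc{C}}}$, exploiting the fact that, by its construction in \ref{dg.1}, it is the simplicial nerve of the $\sset$-enriched category with the same objects as $\bs{\mc{C}}$ and mapping spaces $\upsilon N\trun^{\leq0}\hom{X}{Y}^{\bullet}_{\bs{\mc{C}}}$, each of which is automatically a Kan complex. Since the simplicial nerve of a fibrant simplicial category computes its mapping spaces, this yields a natural equivalence $\map{X}{Y}_{\altdgnerve{\bs{\mc{C}}}} \simeq \upsilon N\trun^{\leq0}\hom{X}{Y}^{\bullet}_{\bs{\mc{C}}}$ for all $X, Y \in \bs{\mc{C}}$; in particular $\pi_0\map{X}{Y}_{\altdgnerve{\bs{\mc{C}}}} \cong \mr{H}^0\hom{X}{Y}^{\bullet}_{\bs{\mc{C}}}$, which is the source of the identification $\mr{H}^0\prns{\bs{\mc{C}}} \simeq \ho{\altdgnerve{\bs{\mc{C}}}}$ of \ref{dg.1}. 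The hypothesis that $\KK$ is a field enters at this point: every $\KK$-module being free, the hom-complexes are cofibrant, so the naive truncation $\trun^{\leq0}$ already agrees with its derived functor and the displayed formula genuinely computes the mapping spaces; this is the differential graded shadow of the $A_\infty$-categorical framework of \cite{Faonte_simplicial-nerve}.

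First I would prove that $\altdgnerve{\bs{\mc{C}}}$ is stable whenever $\bs{\mc{C}}$ is pretriangulated. Since $\bs{\mc{C}}$ is pretriangulated it contains, for any $X$, the mapping cone $\on{cone}\prns{\id_X}$, which is contractible; hence both of its mapping complexes are acyclic, $\upsilon N\trun^{\leq0}$ of an acyclic connective complex is a contractible Kan complex, and $\on{cone}\prns{\id_X}$ is a zero object of $\altdgnerve{\bs{\mc{C}}}$, so $\altdgnerve{\bs{\mc{C}}}$ is pointed. Next, for a closed degree-zero morphism $\bar{f}: X \to Y$, a direct computation with mapping complexes identifies $\hom{\on{cone}\prns{\bar{f}}}{W}^{\bullet}_{\bs{\mc{C}}}$ with the shifted mapping cone, hence the homotopy fiber, of the map $\hom{Y}{W}^{\bullet}_{\bs{\mc{C}}} \to \hom{X}{W}^{\bullet}_{\bs{\mc{C}}}$ given by precomposition with $\bar{f}$, and dually $\hom{W}{\on{cone}\prns{\bar{f}}}^{\bullet}_{\bs{\mc{C}}}$ with the mapping cone of $\hom{W}{X}^{\bullet}_{\bs{\mc{C}}} \to \hom{W}{Y}^{\bullet}_{\bs{\mc{C}}}$. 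The functor $\upsilon N\trun^{\leq0}$ preserves finite products (being a right adjoint) and homotopy fiber sequences (being a composite of right Quillen functors); so applying it together with the mapping-space formula shows that the canonical square with initial vertex $X$, mapping to $Y$ and to $0$ and with terminal vertex $\on{cone}\prns{\bar{f}}$, becomes a pullback of spaces after mapping out of any $W$, hence is a pushout in $\altdgnerve{\bs{\mc{C}}}$; thus $\on{cofib}\prns{\bar{f}} \simeq \on{cone}\prns{\bar{f}}$. Since every morphism of $\altdgnerve{\bs{\mc{C}}}$ is equivalent to one induced by a closed degree-zero morphism of $\bs{\mc{C}}$ (because $\pi_0\map{X}{Y}_{\altdgnerve{\bs{\mc{C}}}} \cong \mr{H}^0\hom{X}{Y}^{\bullet}_{\bs{\mc{C}}}$), all cofibers exist; finite direct sums exist in $\bs{\mc{C}}$ (e.g.\ $B \oplus C \simeq \on{cone}\prns{0 \colon B[-1] \to C}$) and represent both products and coproducts in $\altdgnerve{\bs{\mc{C}}}$ by the analogous mapping-complex computation; and an arbitrary pushout $B \amalg_A C$ is then computed as $\on{cone}\prns{A \to B \oplus C}$. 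Hence $\altdgnerve{\bs{\mc{C}}}$ admits all finite colimits, and $\Sigma X = \on{cofib}\prns{X \to 0} \simeq \on{cone}\prns{0 \colon X \to 0} \simeq X[1]$ is an equivalence with inverse the translation $[-1]$ of $\bs{\mc{C}}$; by the characterization of stability for pointed qcategories with finite colimits (\cite[1.1.3.4]{Lurie_higher-algebra}), $\altdgnerve{\bs{\mc{C}}}$ is stable.

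It then remains to identify the triangulated structures and to check functoriality. The translation of the stable qcategory $\altdgnerve{\bs{\mc{C}}}$ is $\Sigma \simeq [1]$ by the above, matching the translation of the triangulated category $\mr{H}^0\prns{\bs{\mc{C}}}$ of Bondal--Kapranov recalled in \ref{dg.1}; and, again by the previous paragraph, the cofiber sequences of $\altdgnerve{\bs{\mc{C}}}$ are exactly the triangles $X \xrightarrow{\bar{f}} Y \to \on{cone}\prns{\bar{f}} \to X[1]$ with $\bar{f}$ closed of degree zero, which are precisely the distinguished triangles of $\mr{H}^0\prns{\bs{\mc{C}}}$ — provided one checks that the connecting morphism $\on{cone}\prns{\bar{f}} \to X[1]$ arising from the cofiber sequence agrees with the one supplied by the mapping-cone construction. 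Granting this, the equivalence $\ho{\altdgnerve{\bs{\mc{C}}}} \simeq \mr{H}^0\prns{\bs{\mc{C}}}$ is exact for both triangulated structures and so is an equivalence of triangulated categories. For functoriality, a $\KK$-linear differential graded functor $F: \bs{\mc{C}} \to \bs{\mc{D}}$ between pretriangulated differential graded categories commutes strictly with the mapping-cone construction and with finite direct sums (both built functorially from the hom-complexes), so $\altdgnerve{F}$ preserves zero objects and the pushout squares exhibited above and is therefore exact (\cite[1.1.4.1]{Lurie_higher-algebra}); since compositions and inverses of such functors remain exact, the functor $\altdgnerve{}$ of \ref{dg.1} restricts to a functor $\dgcat^{\mr{pretr}}_{\KK}\brk{\mf{W}^{-1}_{\mr{DK}}} \to \qcat^{\mr{Ex}}$.

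The main obstacle is the combinatorial input deferred twice above: realizing the pushout square with corners $X$, $Y$, $0$, $\on{cone}\prns{\bar{f}}$ as an honest functor out of $\Delta^1 \times \Delta^1$ — which means choosing the coherence homotopy witnessing the nullity of $Y \to \on{cone}\prns{\bar{f}}$ precomposed with $\bar{f}$, and recognizing it as the one coming from the cone data, so that the universal-property check via mapping spaces really concerns the comparison map one wants — and, relatedly, matching the connecting morphism of the resulting cofiber sequence with the tautological projection $\on{cone}\prns{\bar{f}} = X[1] \oplus Y \to X[1]$. This amounts to a bookkeeping of low-dimensional simplices in the variant differential graded nerve, carried out in the more general $A_\infty$-setting in \cite{Faonte_simplicial-nerve}; the role of the argument sketched above is to isolate this bookkeeping from the purely homotopical input, namely that $\upsilon N\trun^{\leq0}$ preserves homotopy fiber sequences.
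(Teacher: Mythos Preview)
Your proposal is correct and ultimately rests on the same two ingredients as the paper's proof: Faonte's theorem that $\altdgnerve{\bs{\mc{C}}}$ is stable with $\ho{\altdgnerve{\bs{\mc{C}}}}$ triangulated-equivalent to $\mr{H}^0\prns{\bs{\mc{C}}}$, and the observation that a differential graded functor between pretriangulated categories preserves zero objects and mapping cones, hence induces an exact functor on nerves. The difference is one of emphasis rather than strategy: the paper invokes \cite[4.3.1]{Faonte_simplicial-nerve} as a black box for stability and the triangulated comparison, and adds only the functoriality check; you instead sketch Faonte's argument from scratch---identifying mapping spaces via $\upsilon N\trun^{\leq0}$, exhibiting $\on{cone}\prns{\bar f}$ as a cofiber by the homotopy-fiber computation on hom-complexes, and recognizing $\Sigma \simeq \brk{1}$---before arriving at the identical functoriality check. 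Your honest isolation of the remaining combinatorial obstacle (realizing the cone square as a genuine $\Delta^1 \times \Delta^1$-diagram and matching connecting morphisms) is exactly the content you are still deferring to Faonte, so in the end both proofs lean on the same external input. One small correction: your explanation of where the hypothesis that $\KK$ is a field enters is not quite right---the identification $\map{X}{Y}_{\altdgnerve{\bs{\mc{C}}}} \simeq \upsilon N\trun^{\leq0}\hom{X}{Y}^{\bullet}_{\bs{\mc{C}}}$ holds by construction over any base ring, with no derived-functor issue; the field hypothesis is rather inherited from the $A_\infty$-framework of \cite{Faonte_simplicial-nerve} and becomes genuinely relevant later in the paper (see \ref{dg.3}, \ref{dg.4}) for the compatibility of $\altdgnerve{}$ with tensor products.
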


\begin{proof}
By \cite[4.3.1]{Faonte_simplicial-nerve}, for each $\bs{\mc{C}} \in \dgcat^{\mr{pretr}}_{\KK}$, the qcategory $\altdgnerve{\bs{\mc{C}}}$ is stable and $\mr{H}^0\prns{\bs{\mc{C}}}$ is $\ho{\altdgnerve{\bs{\mc{C}}}}$ are equivalent as triangulated categories.
To show that  $\altdgnerve{}: \dgcat^{\mr{pretr}}_{\KK} \to \qcat$ factors through $\qcat^{\mr{Ex}} \to \qcat$, it suffices to check that it sends each morphism $F: \bs{\mc{C}} \to \bs{\mc{D}}$ of $\dgcat^{\mr{pretr}}_{\KK}$ to an exact functor, i.e., that $\altdgnerve{F}$ preserves zero objects and cofiber sequences.
As explained in the proof of \cite[4.3.1]{Faonte_simplicial-nerve}, zero objects and cofiber sequences in $\bs{\mc{C}}$ \resp{$\bs{\mc{D}}$} induce zero objects and cofiber sequences in $\altdgnerve{\bs{\mc{C}}}$ \resp{$\altdgnerve{\bs{\mc{D}}}$}.
Since $F$ preserves zero objects and mapping cones, it follows that $\altdgnerve{F}$ preserves zero objects and cofiber sequences.
\end{proof}

\begin{lemma}
\label{dg.3}
Let $\KK$ be a field.
The projective model structure \textup{(\cite[2.3.11]{Hovey_model-categories})} on the category $\cpx{\mod{\ab}_{\KK}}$ equals the injective model structure \textup{(\cite[2.3.13]{Hovey_model-categories})}, which is therefore symmetric monoidal and satisfies the monoid axiom \textup{(\cite[3.3]{Schwede-Shipley_algebras-and-modules})}.
\end{lemma}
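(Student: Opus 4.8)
The plan is to show that the projective and the injective model structures on $\cpx{\mod{\ab}_{\KK}}$ have the same weak equivalences and the same cofibrations; since a model structure is determined by these two classes, the structures then coincide, and the monoidality and the monoid axiom can be read off from the resulting description of the (trivial) cofibrations.

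For the coincidence, I would first recall that the weak equivalences of both structures are the quasi-isomorphisms (\cite[2.3.11, 2.3.13]{Hovey_model-categories}) and that the cofibrations of the injective structure are, by definition, the degreewise monomorphisms. The generating cofibrations of the projective structure are monomorphisms, and in the Grothendieck Abelian category $\cpx{\mod{\ab}_{\KK}}$ monomorphisms are stable under pushout, transfinite composition and retract; hence every projective cofibration is a monomorphism. The substantive point is the converse: over the field $\KK$, every monomorphism $f \colon A \hookrightarrow B$ is a projective cofibration, i.e.\ lifts against every trivial fibration $p \colon X \to Y$, a surjective quasi-isomorphism. Here $\ker(p)$ is acyclic by the long exact cohomology sequence, hence --- and this is where being a field is used essentially --- \emph{contractible}: any choice of splittings of the short exact sequences $0 \to \ker(\d^n) \to X^n \to \ker(\d^{n+1}) \to 0$ exhibits an acyclic complex of $\KK$-vector spaces as a direct sum of shifts of $\KK \xrightarrow{\id} \KK$ and thereby yields a contracting homotopy. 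Given a lifting problem for $f$ against $p$, I would use that $f$ is degreewise split --- so that $B$, as a graded $\KK$-module, is $A \oplus B/A$ with a twisted differential --- to choose first a graded (not yet chain) lift $B \to X$ of $B \to Y$ that restricts on $A$ to the prescribed map $A \to X$, possible since $p$ is degreewise surjective and the terms of $B/A$ are free, and then to correct it by composing its failure to be a chain map --- a chain map valued in $\ker(p)$ --- with the contracting homotopy of $\ker(p)$; the correction does not affect compatibility over $Y$ and produces the desired lift. This lifting argument, whose heart is the contractibility just noted, is the only non-formal step.

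With the two structures identified, write $\mathbf{M}$ for the common model category: its cofibrations are the monomorphisms, its trivial cofibrations the monomorphisms with acyclic cokernel. For the pushout--product axiom I would use that the pushout--product of two monomorphisms $i$ and $j$ is again a monomorphism with cokernel $\coker(i) \otimes \coker(j)$ --- all $\KK$-modules being flat --- together with the fact that the tensor product over $\KK$ of an acyclic complex with any complex is acyclic (K\"unneth; or: acyclic $\Rightarrow$ contractible $\Rightarrow$ tensor contractible $\Rightarrow$ acyclic); thus the pushout--product of two cofibrations is a cofibration, and is trivial as soon as one of the two is. The unit axiom is automatic since the unit $\KK$ in degree $0$ is cofibrant. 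Finally, for the monoid axiom (\cite[3.3]{Schwede-Shipley_algebras-and-modules}), the same flatness and K\"unneth observations show that tensoring a trivial cofibration with an arbitrary object again gives a monomorphism with acyclic cokernel, hence a trivial cofibration; and trivial cofibrations, being exactly the maps with the left lifting property against all fibrations, are closed under pushout and transfinite composition. Therefore every map assembled from maps of the form (trivial cofibration)$\,\otimes\,$(object) by pushout and transfinite composition is a weak equivalence, which is the assertion of the monoid axiom.
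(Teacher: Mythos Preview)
Your argument is correct, and the essential ingredient---that over a field every acyclic complex is contractible---is the same one the paper uses. The packaging, however, differs. The paper does not build a lift by hand: instead it invokes Hovey's characterization of projective cofibrations as the degreewise split monomorphisms with projectively cofibrant cokernel, and then shows that \emph{every} complex of $\KK$-vector spaces is projectively cofibrant (every module is projective and every acyclic complex is contractible, so \cite[9.4]{Barthel-May-Riehl_six-model} applies). Since every monomorphism of $\KK$-vector spaces splits, this immediately yields ``projective cofibration $=$ monomorphism''. For the monoidal statements the paper then simply cites \cite[4.2.13]{Hovey_model-categories} for the pushout--product axiom, and for the monoid axiom uses the general fact (\cite[3.4]{Schwede-Shipley_algebras-and-modules}) that it follows from the pushout--product axiom whenever all objects are cofibrant---which was already established.

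Your route trades these citations for explicit verifications: a direct lifting argument using the contracting homotopy of $\ker(p)$, and a direct check that (trivial cofibration)$\,\otimes\,$(object) is again a trivial cofibration. This is more self-contained but longer; the paper's approach is shorter because ``every object is cofibrant'' is a single observation that simultaneously closes the cofibration identification and the monoid axiom.
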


\begin{proof}
The weak equivalences and fibrations of the projective model structure are the quasi-isomorphisms and the epimorphisms, respectively.
By \cite[2.3.9]{Hovey_model-categories}, a morphism $f$ of $\cpx{\mod{\ab}_{\KK}}$ is a projective cofibration if and only if it is a degreewise split monomorphism with cofibrant cokernel.
Each monomorphism of $\KK$-modules splits as $\KK$ is a field.
Also, each $\KK$-module is projective and each acyclic complex of $\KK$-modules is contractible, so \cite[9.4]{Barthel-May-Riehl_six-model} shows that each object of $\cpx{\mod{\ab}_{\KK}}$ is cofibrant.
This shows that the projective cofibrations are precisely the injective cofibrations, which proves the first claim.
By \cite[4.2.13]{Hovey_model-categories}, the injective model structure is therefore symmetric monoidal.
The monoid axiom follows from the pushout-product axiom, since each object is cofibrant in the injective model structure (\cite[3.4]{Schwede-Shipley_algebras-and-modules}).
\end{proof}

\begin{thm}
[{\cite[\S3]{Hinich_rectification-of-algebras}}]
\label{dg.4}
Let $\KK$ be a field.
\begin{enumerate}
\item
The qcategory $\dgcat_{\KK}\brk{\mf{W}^{-1}_{\mr{DK}}}$ inherits a symmetric monoidal structure from $\dgcat^{\otimes}_{\KK}$.
\item
The functor $\altdgnerve{}$ underlies a lax symmetric monoidal functor $\altdgnerve{}^{\otimes}: \dgcat_{\KK}\brk{\mf{W}^{-1}_{\mr{DK}}}\scr{\otimes}{} \to \qcat^{\times}$.
\item
In particular, $\altdgnerve{}^{\otimes}$ induces a functor $\calg{\dgcat_{\KK}\brk{\mf{W}^{-1}_{\mr{DK}}}\scr{\otimes}{}} \to \calg{\qcat^{\times}}$.
\end{enumerate}
\end{thm}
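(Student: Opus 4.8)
The plan is to obtain (i) and (ii) from Hinich's work, with Lemma \ref{dg.3} furnishing the hypotheses those results require, and to deduce (iii) formally. For (i), Lemma \ref{dg.3} tells us that $\cpx{\mod{\ab}_{\KK}}\scr{\otimes}{}$, with its projective $=$ injective model structure, is a symmetric monoidal $\mf{U}$-combinatorial model category in which every object is cofibrant and which satisfies the monoid axiom. These are precisely the hypotheses under which the category $\dgcat_{\KK}$ of $\cpx{\mod{\ab}_{\KK}}\scr{\otimes}{}$-enriched categories carries the Dwyer-Kan model structure with weak equivalences $\mf{W}_{\mr{DK}}$ and under which the tensor product $\dgcat^{\otimes}_{\KK}$ of \ref{dg.1}$(vi)$ makes it a \emph{symmetric monoidal} model category: the point is that, every object of $\cpx{\mod{\ab}_{\KK}}$ being cofibrant, one need not cofibrant-replace the factors, so $\otimes$ is already its own left derived functor (\cite[\S3]{Hinich_rectification-of-algebras}). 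As $\dgcat_{\KK}$ is locally $\mf{U}$-presentable, this model structure is $\mf{U}$-combinatorial, and \cite[4.1.3.6, 4.1.4.8]{Lurie_higher-algebra} then endows $\dgcat_{\KK}\brk{\mf{W}^{-1}_{\mr{DK}}}$ with the desired underlying symmetric monoidal structure $\dgcat_{\KK}\brk{\mf{W}^{-1}_{\mr{DK}}}\scr{\otimes}{}$.

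For (ii), the composite lax symmetric monoidal right adjoint $\cpx{\mod{\ab}_{\KK}}\scr{\otimes}{} \to \sset^{\times}$ of \ref{dg.1}$(vii)$ induces, by change of enrichment, a lax symmetric monoidal functor from $\dgcat^{\otimes}_{\KK}$ to the category of $\mf{U}$-small $\sset^{\times}$-enriched categories --- laxity is inherited because the tensor product of enriched categories is assembled from the tensor product of hom-objects --- and, as noted in \ref{dg.1}$(vii)$, this functor lands in fibrant simplicial categories. Post-composing with the simplicial nerve $\snerve{}$, which carries fibrant simplicial categories to qcategories and, being a right adjoint, preserves finite products and so is symmetric monoidal for the Cartesian structures, yields a lax symmetric monoidal functor at the point-set level that realizes $\altdgnerve{}$. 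By the construction recalled in \ref{dg.1}$(vii)$ this functor sends $\mf{W}_{\mr{DK}}$ to categorical equivalences; combining this with (i) --- which guarantees that the symmetric monoidal structure on $\dgcat_{\KK}\brk{\mf{W}^{-1}_{\mr{DK}}}$ is the derived one --- the lax symmetric monoidal structure descends to the localizations, giving $\altdgnerve{}^{\otimes}: \dgcat_{\KK}\brk{\mf{W}^{-1}_{\mr{DK}}}\scr{\otimes}{} \to \qcat^{\times}$ (\cite[\S3]{Hinich_rectification-of-algebras}).

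Part (iii) is then immediate, since a lax symmetric monoidal functor induces a functor between the qcategories of commutative algebras (\cite[2.1.3]{Lurie_higher-algebra}): applying this to $\altdgnerve{}^{\otimes}$ gives $\calg{\dgcat_{\KK}\brk{\mf{W}^{-1}_{\mr{DK}}}\scr{\otimes}{}} \to \calg{\qcat^{\times}}$. I expect the main obstacle to lie in the descent step of (ii): Lurie's treatment of underlying symmetric monoidal qcategories of model categories is phrased for the monoidal structures themselves, not for lax monoidal functors between \emph{different} model categories, so one must either exhibit $\altdgnerve{}^{\otimes}$ as a genuine map of the associated coCartesian fibrations over $\fin$, via relative-nerve constructions, and verify directly that it passes to the localizations, or transport the assertion through the model category of $\sset^{\times}$-enriched categories. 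Verifying along the way that all the model structures in play --- and in particular $\dgcat^{\otimes}_{\KK}$, which rests on Lemma \ref{dg.3} --- are symmetric monoidal and $\mf{U}$-combinatorial is the remaining technical point.
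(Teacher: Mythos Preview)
Your argument for $(i)$ contains a genuine gap: the claim that the Dwyer--Kan model structure on $\dgcat_{\KK}$ is a \emph{symmetric monoidal model category} with respect to the tensor product of \ref{dg.1}$(vi)$ is false, even over a field. This is a well-known subtlety, first emphasized by To\"en: the pushout-product axiom fails for the Tabuada/Dwyer--Kan model structure on dg-categories. The justification you offer --- that every object of $\cpx{\mod{\ab}_{\KK}}$ is cofibrant, so $\otimes$ ``is already its own left derived functor'' --- only establishes that the tensor product of dg-categories preserves Dwyer--Kan equivalences separately in each variable. It says nothing about cofibrations in $\dgcat_{\KK}$ (not every dg-category is cofibrant!), and so does not supply the pushout-product axiom that \cite[4.1.4.8]{Lurie_higher-algebra} requires.

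The good news is that you already have the correct ingredient in hand. The paper proceeds precisely by using Lemma \ref{dg.3} to see that all $\hom{}{}$-complexes are cofibrant, deducing (following \cite[3.1.2]{Hinich_rectification-of-algebras}) that $\otimes$ preserves $\mf{W}_{\mr{DK}}$ in each variable, and then invoking \cite[4.1.3.4]{Lurie_higher-algebra} --- which needs only this preservation of weak equivalences, not a monoidal model structure --- to place the symmetric monoidal structure on the localization $\dgcat_{\KK}\brk{\mf{W}^{-1}_{\mr{DK}}}$. Your treatment of $(ii)$, unpacking Hinich's construction via change of enrichment and the simplicial nerve, and of $(iii)$ matches the paper's; the only repair needed is to replace the false monoidal-model-category assertion in $(i)$ with the direct appeal to \cite[4.1.3.4]{Lurie_higher-algebra}.
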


\begin{proof}
By \ref{dg.3}, the enriched $\hom{}{}$-objects of each $\KK$-linear differential graded category are projectively cofibrant.
Following \cite[3.1.2]{Hinich_rectification-of-algebras}, we observe that tensor products preserve all Dwyer-Kan equivalences separately in each variable.
Assertion $(i)$ now follows from \cite[4.1.3.4]{Lurie_higher-algebra} and assertion $(ii)$ is explained in \cite[3.2.2]{Hinich_rectification-of-algebras}.
Assertion $(iii)$ follows from $(ii)$: lax symmetric monoidal functors induce functors between qcategories of commutative algebra objects.
\end{proof}

\begin{prop}
\label{dg.5}
Let $\KK$ be a field and $F^{\otimes}: \bs{\mc{C}}^{\otimes} \to \bs{\mc{D}}^{\otimes}$ a morphism of $\calg{\dgcat^{\otimes}_{\KK}}$ such that $\bs{\mc{C}}$ and $\bs{\mc{D}}$ are pretriangulated.
Then $\altdgnerve{F}^{\otimes}: \altdgnerve{\bs{\mc{C}}}^{\otimes} \to \altdgnerve{\bs{\mc{D}}}^{\otimes}$ is an exact symmetric monoidal functor between $\mf{U}$-small stable symmetric monoidal qcategories.
\end{prop}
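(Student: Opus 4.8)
The plan is to combine \ref{dg.2}, which supplies stability and exactness of the underlying data, with \ref{dg.4}, which supplies the symmetric monoidal structure. First, since $\bs{\mc{C}}^{\otimes}$ and $\bs{\mc{D}}^{\otimes}$ are objects of $\calg{\dgcat^{\otimes}_{\KK}}$ and $F^{\otimes}$ a morphism between them, the symmetric monoidal localization functor $\dgcat^{\otimes}_{\KK} \to \dgcat_{\KK}\brk{\mf{W}^{-1}_{\mr{DK}}}\scr{\otimes}{}$ provided by \ref{dg.4}$(i)$ carries $F^{\otimes}$ to a morphism of $\calg{\dgcat_{\KK}\brk{\mf{W}^{-1}_{\mr{DK}}}\scr{\otimes}{}}$, and applying the functor of \ref{dg.4}$(iii)$ yields a morphism $\altdgnerve{F}^{\otimes}: \altdgnerve{\bs{\mc{C}}}^{\otimes} \to \altdgnerve{\bs{\mc{D}}}^{\otimes}$ of $\calg{\qcat^{\times}}$, i.e., a symmetric monoidal functor of $\mf{U}$-small symmetric monoidal qcategories. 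Since $\altdgnerve{}$ is the functor underlying the lax symmetric monoidal functor $\altdgnerve{}^{\otimes}$, the underlying functor of $\altdgnerve{F}^{\otimes}$ is $\altdgnerve{F}$, the differential graded nerve of the underlying dg functor $F$ of $F^{\otimes}$.

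Next I would invoke \ref{dg.2}. As $\bs{\mc{C}}$ and $\bs{\mc{D}}$ are pretriangulated and $\dgcat^{\mr{pretr}}_{\KK} \subseteq \dgcat_{\KK}$ is the full subcategory spanned by the pretriangulated differential graded categories, the dg functor $F$ is a morphism of $\dgcat^{\mr{pretr}}_{\KK}$; hence \ref{dg.2} shows that $\altdgnerve{\bs{\mc{C}}}$ and $\altdgnerve{\bs{\mc{D}}}$ are $\mf{U}$-small stable qcategories and that $\altdgnerve{F}$ is exact. Thus $\altdgnerve{\bs{\mc{C}}}^{\otimes}$ and $\altdgnerve{\bs{\mc{D}}}^{\otimes}$ are symmetric monoidal qcategories with stable underlying qcategories, and $\altdgnerve{F}^{\otimes}$ is symmetric monoidal with exact underlying functor.

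It then remains to promote $\altdgnerve{\bs{\mc{C}}}^{\otimes}$ and $\altdgnerve{\bs{\mc{D}}}^{\otimes}$ to objects of $\calg{\qcat^{\mr{Ex}, \otimes}}$ (a model for $\mf{U}$-small stable symmetric monoidal qcategories; see the conventions). Since their underlying qcategories are already stable, it suffices that the tensor product be exact separately in each variable, and then $\altdgnerve{F}^{\otimes}$, having exact underlying functor, will automatically be a morphism of $\calg{\qcat^{\mr{Ex}, \otimes}}$ by the compatibility of $\qcat^{\mr{Ex}, \otimes}$ with $\qcat^{\times}$. To check biexactness on $\altdgnerve{\bs{\mc{C}}}$, fix an object $X$ of $\bs{\mc{C}}$ and let $x: \mathbf{1} \to \bs{\mc{C}}$ be the $\KK$-linear dg functor picking it out, where $\mathbf{1}$ denotes the unit of $\dgcat^{\otimes}_{\KK}$; then $(-) \otimes X$ is the dg endofunctor of $\bs{\mc{C}}$ obtained by composing the unit constraint $\bs{\mc{C}} \cong \bs{\mc{C}} \otimes \mathbf{1}$, the dg functor $\id_{\bs{\mc{C}}} \otimes x$, and the multiplication of $\bs{\mc{C}}^{\otimes}$. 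The hard part will be showing that, after unwinding the structure maps of the lax symmetric monoidal functor $\altdgnerve{}^{\otimes}$ of \ref{dg.4}$(ii)$ together with its unit coherence, the functor ``tensoring with $\altdgnerve{X}$'' on $\altdgnerve{\bs{\mc{C}}}$ is identified with $\altdgnerve{(-) \otimes X}$; granting this, $\altdgnerve{(-) \otimes X}$ is exact by \ref{dg.2}, since $(-) \otimes X$ is a dg endofunctor of the pretriangulated $\bs{\mc{C}}$, and the commutativity constraint of $\bs{\mc{C}}^{\otimes}$ gives the same on the other variable, with the argument for $\altdgnerve{\bs{\mc{D}}}$ being identical. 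Apart from this coherence bookkeeping, the proof is a direct application of \ref{dg.2} and \ref{dg.4}.
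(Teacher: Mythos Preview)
Your proposal is correct and follows essentially the same strategy as the paper: use \ref{dg.4} to produce the symmetric monoidal functor $\altdgnerve{F}^{\otimes}$ in $\calg{\qcat^{\times}}$, use \ref{dg.2} for stability and exactness of the underlying data, and check biexactness of the tensor product by identifying $\prns{-}\otimes X$ on $\altdgnerve{\bs{\mc{C}}}$ with $\altdgnerve{}$ applied to the corresponding dg endofunctor. The paper handles your ``hard part'' in exactly the same way, simply asserting that ``unwinding the construction of $\altdgnerve{}^{\otimes}$'' shows the endofunctor $X\otimes\prns{-}$ of $\altdgnerve{\bs{\mc{C}}}$ is obtained from the dg endofunctor by applying $\altdgnerve{}$; it also records explicitly the auxiliary observation that each fiber $\altdgnerve{\bs{\mc{C}}}^{\otimes}_{\langle n\rangle}$ is stable as a finite product of stable qcategories, which you leave implicit.
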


\begin{proof}
By \ref{dg.4}, $F^{\otimes}$ induces a morphism $\altdgnerve{F}^{\otimes}$ of $\calg{\qcat^{\times}}$.
Under the given hypotheses, we claim that $\altdgnerve{F}^{\otimes}$ is in fact a morphism of $\calg{\qcat^{\mr{Ex}, \otimes}}$ (\cite[5.4.7]{Lurie_DAGVIII}, \cite[4.8.1.9]{Lurie_higher-algebra}), i.e., an exact symmetric monoidal functor between stable symmetric monoidal qcategories or, equivalently, that:
\begin{enumerate}
\item
$\altdgnerve{F}$ preserves finite colimits;
\item
for each $\langle n \rangle \in \fin$, the qcategories $\altdgnerve{\bs{\mc{C}}}^{\otimes}_{\langle n \rangle}$ and $\altdgnerve{\bs{\mc{D}}}^{\otimes}_{\langle n \rangle}$ are stable; and
\item
tensor products in $\altdgnerve{\bs{\mc{C}}}^{\otimes}$ and $\altdgnerve{\bs{\mc{D}}}^{\otimes}$ preserve finite colimits separately in each variable.
\end{enumerate}
By \ref{dg.2}, $\altdgnerve{F}$ is an exact functor between stable qcategories, which proves $(i)$.
Also, $\mf{U}$-products of stable qcategories are stable (\cite[1.1.4.3]{Lurie_higher-algebra}) and, by definition of a symmetric monoidal qcategory (\cite[2.0.0.7]{Lurie_higher-algebra}), $\altdgnerve{\bs{\mc{C}}}^{\otimes}_{\langle n \rangle}$ and $\altdgnerve{\bs{\mc{D}}}^{\otimes}_{\langle n \rangle}$ are both finite products of copies of $\altdgnerve{\bs{\mc{C}}}$ and $\altdgnerve{\bs{\mc{D}}}$, respectively, hence stable, proving $(ii)$.
For each object $X$ of $\bs{\mc{C}}$ \resp{$\bs{\mc{D}}$}, $X \otimes \prns{-}$ is a $\KK$-linear differential graded endofunctor of $\bs{\mc{C}}$ \resp{$\bs{\mc{D}}$}.
Unwinding the construction of $\altdgnerve{}^{\otimes}$, the endofunctor $X \otimes \prns{-}$ of $\altdgnerve{\bs{\mc{C}}}$ \resp{$\altdgnerve{\bs{\mc{D}}}$} is obtained from this differential graded endofunctor by applying $\altdgnerve{}$, so \ref{dg.2} implies that it too preserves finite colimits, proving $(iii)$.
\end{proof}

\begin{cor}
\label{dg.6}
Let $\KK$ be a field, $\bs{\mc{C}}^{\otimes} \in \calg{\dgcat^{\otimes}_{\KK}}$ such that $\bs{\mc{C}}$ is pretriangulated, and $\bs{\mc{A}} \subseteq \bs{\mc{C}}$ a pretriangulated full differential graded subcategory such that $C \otimes A \in \bs{\mc{A}}$ for each $C \in \bs{\mc{C}}$ and each $A \in \bs{\mc{A}}$.
The differential graded nerve $\altdgnerve{\bs{\mc{C}}}$ inherits a stable symmetric monoidal structure $\altdgnerve{\bs{\mc{C}}}\scr{\otimes}{}$ from $\bs{\mc{C}}^{\otimes}$ and the symmetric monoidal Verdier quotient \textup{(\ref{thm.5})} of $\altdgnerve{\bs{\mc{C}}}\scr{\otimes}{}$ by $\altdgnerve{\bs{\mc{A}}}$ exists.
\end{cor}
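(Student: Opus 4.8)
The plan is to recognize $\altdgnerve{\bs{\mc{C}}}\scr{\otimes}{}$, together with the inclusion $\altdgnerve{\bs{\mc{A}}} \hookrightarrow \altdgnerve{\bs{\mc{C}}}$, as admissible data for the symmetric monoidal Verdier quotient construction of \ref{thm.4}, and then simply to invoke that corollary; most of the work is assembling its hypotheses from \ref{dg.2}, \ref{dg.4} and \ref{dg.5}.

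First I would produce the stable symmetric monoidal structure. Using the symmetric monoidal localization functor $\dgcat^{\otimes}_{\KK} \to \dgcat_{\KK}\brk{\mf{W}^{-1}_{\mr{DK}}}\scr{\otimes}{}$ of \ref{dg.4}$(i)$, one may regard $\bs{\mc{C}}^{\otimes}$ as an object of $\calg{\dgcat_{\KK}\brk{\mf{W}^{-1}_{\mr{DK}}}\scr{\otimes}{}}$, whence \ref{dg.4}$(iii)$ endows $\altdgnerve{\bs{\mc{C}}}$ with a symmetric monoidal structure $\altdgnerve{\bs{\mc{C}}}\scr{\otimes}{} \in \calg{\qcat^{\times}}$ inherited from $\bs{\mc{C}}^{\otimes}$. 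Applying \ref{dg.5} to the identity morphism $\id_{\bs{\mc{C}}^{\otimes}}$ — or, equivalently, rerunning the verification carried out in its proof — shows that $\altdgnerve{\bs{\mc{C}}}\scr{\otimes}{}$ is a $\mf{U}$-small stable symmetric monoidal qcategory: each fiber $\altdgnerve{\bs{\mc{C}}}^{\otimes}_{\ang{n}}$ is a finite power of the stable qcategory $\altdgnerve{\bs{\mc{C}}}$ of \ref{dg.2}, and for each object $X$ the endofunctor $X \otimes \prns{-}$ of $\altdgnerve{\bs{\mc{C}}}$ is the image under $\altdgnerve{}$ of a $\KK$-linear differential graded endofunctor of $\bs{\mc{C}}$, hence exact by \ref{dg.2}.

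Next I would check that $\altdgnerve{\bs{\mc{A}}}$ is a stable subqcategory of $\altdgnerve{\bs{\mc{C}}}$ with $C \otimes A \in \altdgnerve{\bs{\mc{A}}}$ whenever $C \in \altdgnerve{\bs{\mc{C}}}$ and $A \in \altdgnerve{\bs{\mc{A}}}$. Since $\bs{\mc{A}}$ is a full pretriangulated differential graded subcategory, the inclusion $\bs{\mc{A}} \hookrightarrow \bs{\mc{C}}$ is a morphism of $\dgcat^{\mr{pretr}}_{\KK}$, so \ref{dg.2} yields an exact functor $\altdgnerve{\bs{\mc{A}}} \to \altdgnerve{\bs{\mc{C}}}$ between stable qcategories; it is moreover fully faithful because fullness of $\bs{\mc{A}}$ makes the $\hom{}{}$-complexes of $\bs{\mc{A}}$ coincide with those of $\bs{\mc{C}}$, hence also the mapping spaces obtained after applying the truncation, Dold--Kan, forgetful and simplicial nerve functors of \ref{dg.1}$(vii)$. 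Finally, $\altdgnerve{}$ leaves the class of objects unchanged, and, as unwound in the proof of \ref{dg.5}, the tensor product of two objects in $\altdgnerve{\bs{\mc{C}}}\scr{\otimes}{}$ agrees with their differential graded tensor product in $\bs{\mc{C}}^{\otimes}$; the hypothesis $C \otimes A \in \bs{\mc{A}}$ for $C \in \bs{\mc{C}}$, $A \in \bs{\mc{A}}$ is therefore literally the required statement about $\altdgnerve{\bs{\mc{C}}}$ and $\altdgnerve{\bs{\mc{A}}}$.

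Having checked all this, I would conclude by applying \ref{thm.4} with $\mc{C}^{\otimes} := \altdgnerve{\bs{\mc{C}}}\scr{\otimes}{}$ and $\mc{A} := \altdgnerve{\bs{\mc{A}}}$: it produces the exact symmetric monoidal functor $\pi^{\otimes}: \altdgnerve{\bs{\mc{C}}}\scr{\otimes}{} \to \prns{\altdgnerve{\bs{\mc{C}}}/\altdgnerve{\bs{\mc{A}}}}^{\otimes}$, which is by definition (\ref{thm.5}) a symmetric monoidal Verdier quotient of $\altdgnerve{\bs{\mc{C}}}\scr{\otimes}{}$ by $\altdgnerve{\bs{\mc{A}}}$. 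The only point in this argument that is not pure reference-chasing is the object-level compatibility of the two tensor products invoked in the previous paragraph — and this has already been established inside the proof of \ref{dg.5} — so I anticipate no genuine obstacle beyond keeping the universal properties of \ref{dg.4} and \ref{thm.4} straight.
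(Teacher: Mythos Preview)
Your proposal is correct and follows essentially the same route as the paper: apply \ref{dg.5} to $\id_{\bs{\mc{C}}^{\otimes}}$ for the stable symmetric monoidal structure, use \ref{dg.2} for exactness of the inclusion, and invoke \ref{thm.4}. The only minor divergence is that you argue full faithfulness of $\altdgnerve{\bs{\mc{A}}} \hookrightarrow \altdgnerve{\bs{\mc{C}}}$ directly from the construction of $\altdgnerve{}$ (identical $\hom{}{}$-complexes give identical mapping spaces), whereas the paper instead observes that the functor is fully faithful on homotopy categories and appeals to \cite[5.10]{Blumberg-Gepner-Tabuada_universal-characterization}; both are valid and equally short.
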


\begin{proof}
That $\altdgnerve{\bs{\mc{C}}}\scr{\otimes}{} \in \calg{\qcat^{\mr{Ex}, \otimes}}$ follows from \ref{dg.5} applied to the identity $\id_{\bs{\mc{C}}^{\otimes}}$.
By \ref{dg.2}, the inclusion $\bs{\mc{A}} \hookrightarrow \bs{\mc{C}}$ induces an exact functor $\altdgnerve{\bs{\mc{C}}} \to \altdgnerve{\bs{\mc{C}}}$, which is fully faithful as it is fully faithful on the corresponding homotopy categories (\cite[5.10]{Blumberg-Gepner-Tabuada_universal-characterization}).
The symmetric monoidal Verdier quotient therefore exists by \ref{thm.4}.
\end{proof}

\begin{prop}
\label{dg.7}
Let $\KK$ be a field, $\bs{\mc{C}}^{\otimes} \in \calg{\dgcat^{\otimes}_{\KK}}$. 
There exists a symmetric monoidal functor $\pi: \mr{Z}^0\prns{\bs{\mc{C}}}^{\otimes} \to \altdgnerve{\bs{\mc{C}}}^{\otimes}$ inducing the usual localization functor $\mr{Z}^0\prns{\bs{\mc{C}}} \to \mr{H}^0\prns{\bs{\mc{C}}}$ at the level of homotopy categories. 
\end{prop}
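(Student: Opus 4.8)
The plan is to locate $\mr{Z}^{0}\prns{\bs{\mc{C}}}$ inside the construction of $\altdgnerve{\bs{\mc{C}}}$ as the category of objects and $0$-simplices, and then to produce $\pi$ from a monoidal counit. Recall from \ref{dg.1}$(vii)$ that, writing $\Phi := \upsilon \circ N \circ \trun^{\leq0} : \cpx{\mod{\ab}_{\KK}} \to \sset$ for the composite of the underlying functors of the lax symmetric monoidal chain given there and $\Phi_{*}$ for the induced change-of-enrichment functor on enriched categories, one has $\altdgnerve{\bs{\mc{C}}} = \snerve{\Phi_{*}\bs{\mc{C}}}$, and that the symmetric monoidal enhancement $\altdgnerve{}^{\otimes}$ of \ref{dg.4} is the simplicial nerve applied to the symmetric monoidal enhancement of $\Phi_{*}$. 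The first step is the elementary observation that the further composite
\[
\cpx{\mod{\ab}_{\KK}}\scr{\otimes}{}
\xrightarrow{\Phi}
\sset^{\times}
\xrightarrow{\mr{ev}_{[0]}}
\set^{\times},
\]
in which $\mr{ev}_{[0]}$ forms the set of $0$-simplices, is, as a lax symmetric monoidal functor, the functor sending a cochain complex $M^{\bullet}$ to its module $\ker\prns{\d^{0} : M^{0} \to M^{1}}$ of degree-zero cocycles, with lax structure the ``product of cocycles'' maps $\ker\prns{\d^{0}_{M}} \otimes \ker\prns{\d^{0}_{M'}} \to \ker\prns{\d^{0}_{M \otimes M'}}$: indeed $\mr{ev}_{[0]}\Phi\prns{M^{\bullet}}$ is the degree-zero part of $N\trun^{\leq0}M^{\bullet}$, which is $\prns{\trun^{\leq0}M^{\bullet}}^{0} = \ker\prns{\d^{0}}$, and unwinding the lax structures of $\trun^{\leq0}$, $N$, $\upsilon$ and $\mr{ev}_{[0]}$ in degree zero recovers precisely this ``product of cocycles'' map. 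Consequently $\mr{Z}^{0} = \prns{\mr{ev}_{[0]} \circ \Phi}_{*}$ as lax symmetric monoidal functors $\dgcat_{\KK} \to \cat$, so that for $\bs{\mc{C}}^{\otimes} \in \calg{\dgcat^{\otimes}_{\KK}}$ the symmetric monoidal category $\mr{Z}^{0}\prns{\bs{\mc{C}}}^{\otimes}$ is the change of enrichment of $\bs{\mc{C}}^{\otimes}$ along $\mr{ev}_{[0]} \circ \Phi$.

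The second step invokes the adjunction $\mr{disc} \dashv \mr{ev}_{[0]}$ between $\set$ and $\sset$, where $\mr{disc}$ forms the constant (``discrete'') simplicial set --- valid because $\brk{0}$ is terminal in $\Delta$ --- whose counit $\epsilon : \mr{disc} \circ \mr{ev}_{[0]} \Rightarrow \id_{\sset}$ sends a vertex of a simplicial set to its iterated degeneracies. Both $\mr{disc}$ and $\mr{ev}_{[0]}$ preserve finite products, hence are strong symmetric monoidal for the Cartesian structures, so $\epsilon$ is a monoidal natural transformation. Whiskering $\epsilon$ with $\Phi$ and changing enrichment produces, for each $\bs{\mc{C}}^{\otimes} \in \calg{\dgcat^{\otimes}_{\KK}}$, a symmetric monoidal functor of simplicial categories
\[
\mr{disc}_{*}\prns[1]{\mr{Z}^{0}\prns{\bs{\mc{C}}}^{\otimes}} \longrightarrow \Phi_{*}\bs{\mc{C}}^{\otimes}
\]
that is the identity on objects and the degeneracy inclusion on hom simplicial sets. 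The simplicial nerve preserves products (being a right adjoint), hence is strong symmetric monoidal for the Cartesian structures; moreover $\snerve{}$ of a simplicial category with discrete hom objects is the ordinary nerve of its underlying category, and $\snerve{\Phi_{*}\bs{\mc{C}}} = \altdgnerve{\bs{\mc{C}}}$. Applying $\snerve{}$ to the functor above therefore yields the desired morphism $\pi : \mr{Z}^{0}\prns{\bs{\mc{C}}}^{\otimes} \to \altdgnerve{\bs{\mc{C}}}^{\otimes}$ of $\calg{\qcat^{\times}}$, that is, a symmetric monoidal functor.

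It remains to check that $\ho{\pi}$ is the localisation functor. By \ref{dg.1}$(vii)$ we have $\mr{H}^{0}\prns{\bs{\mc{C}}} \simeq \ho{\altdgnerve{\bs{\mc{C}}}}$, and under the description $\altdgnerve{\bs{\mc{C}}} = \snerve{\Phi_{*}\bs{\mc{C}}}$ the mapping space from $C$ to $C'$ in $\altdgnerve{\bs{\mc{C}}}$ is $\Phi\hom{C}{C'}^{\bullet}_{\bs{\mc{C}}}$, so the set of morphisms from $C$ to $C'$ in $\ho{\altdgnerve{\bs{\mc{C}}}}$ is $\pi_{0}\Phi\hom{C}{C'}^{\bullet}_{\bs{\mc{C}}} = \pi_{0}N\trun^{\leq0}\hom{C}{C'}^{\bullet}_{\bs{\mc{C}}} = \h^{0}\hom{C}{C'}^{\bullet}_{\bs{\mc{C}}}$ by the Dold-Kan correspondence. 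Under these identifications $\ho{\pi}$ is the identity on objects, and it carries a degree-zero cocycle $f$ --- a vertex of the relevant mapping space --- to its path component, which is the cohomology class $\brk{f} \in \h^{0}\hom{C}{C'}^{\bullet}_{\bs{\mc{C}}}$; this is precisely the localisation functor $\mr{Z}^{0}\prns{\bs{\mc{C}}} \to \mr{H}^{0}\prns{\bs{\mc{C}}}$ of \ref{dg.1}$(ii)$,$(iii)$.

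The step I expect to be the main obstacle is the monoidal bookkeeping of the second step --- confirming that the strictly-enriched data assembled there genuinely descends to a morphism of commutative algebra objects of $\qcat^{\times}$, not merely to a functor of the underlying qcategories. Most of the relevant work is, however, already contained in the proof of \ref{dg.4} (due to Hinich and Lurie), so that the only genuinely new inputs are the monoidal identification $\mr{Z}^{0} = \prns{\mr{ev}_{[0]} \circ \Phi}_{*}$ and the monoidality of $\epsilon$, both of which are elementary verifications.
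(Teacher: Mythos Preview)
Your argument is correct and structurally parallel to the paper's, but the counit is taken at a different stage of the enrichment chain. The paper uses the adjunction $\iota^{0} \dashv \mr{Z}^{0}$ between $\mod{\ab}_{\KK}$ and $\cpx{\mod{\ab}_{\KK}}$: the counit yields a symmetric monoidal \emph{differential graded} functor $\iota^{0}_{*}\mr{Z}^{0}\prns{\bs{\mc{C}}} \to \bs{\mc{C}}$, to which one then applies the lax symmetric monoidal functor $\altdgnerve{}^{\otimes}$ of \ref{dg.4} as a black box, finally observing that $\altdgnerve{\iota^{0}_{*}\mr{Z}^{0}\prns{\bs{\mc{C}}}} = \mr{Z}^{0}\prns{\bs{\mc{C}}}$. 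You instead push the enrichment all the way down to $\sset$ first and use the adjunction $\mr{disc} \dashv \mr{ev}_{[0]}$ there, obtaining a symmetric monoidal \emph{simplicial} functor $\mr{disc}_{*}\mr{Z}^{0}\prns{\bs{\mc{C}}} \to \Phi_{*}\bs{\mc{C}}$, and then apply $\snerve{}$. Since $\Phi \circ \iota^{0}$ is precisely $\mr{disc}$ composed with the forgetful functor to $\set$, the two constructions produce literally the same simplicial functor before taking nerves; the difference is purely one of packaging. The paper's route has the advantage that the delicate passage from strict symmetric monoidal data to a morphism in $\calg{\qcat^{\times}}$ is entirely absorbed into the citation of \ref{dg.4}, whereas in your version that step (which you rightly flag as the main obstacle) must be argued separately for $\snerve{}$---though, as you note, the relevant input is again Hinich's argument underlying \ref{dg.4}.
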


\begin{proof}
Let $\iota^0: \mod{\ab}_{\KK} \to \cpx{\mod{\ab}_{\KK}}$ denote the functor assigning to each $\KK$-module to the corresponding cochain complex concentrated in degree zero.
Then $\iota^0$ is left adjoint to the functor $\mr{Z}^0$ sending the cochain complex $K := \prns{K^{\bullet}, \d^{\bullet}}$ to the $\KK$-module $\mr{Z}^0\prns{K} := \ker\prns{\d^0: K^0 \to K^1}$.
We can also naturally promote $\iota^0$ to a symmetric monoidal functor $\iota^{0, \otimes}: \mod{\ab}_{\KK}\scr{\otimes}{} \to \cpx{\mod{\ab}_{\KK}}\scr{\otimes}{}$.

By \cite[4.3.1]{Cruttwell_normed-spaces}, $\iota^0 \dashv \mr{Z}^0$ induces a $2$-adjunction
$
\iota^0_*:
\mod{\ab}_{\KK}\scr{\otimes}{}\on{-}\cat
\rightleftarrows
\dgcat_{\KK}:
\mr{Z}^0_*
$
between the $2$-category of $\mf{U}$-small $\mod{\ab}_{\KK}\scr{\otimes}{}$-enriched categories and the $2$-category $\dgcat_{\KK}$.
Unwinding the definitions, for each $\KK$-linear category $\bs{\mc{A}}$, $\iota^0_*\bs{\mc{A}}$ has the same objects as $\bs{\mc{A}}$ and its $\hom{}{}$-objects are given by $\hom{A}{A'}^{\bullet}_{\iota^0_*\bs{\mc{A}}} := \iota^0\hom{A}{A'}_{\bs{\mc{A}}}$, where $\hom{A}{A'}_{\bs{\mc{A}}}$ denotes the $\KK$-module of morphisms from $A$ to $A'$ in $\bs{\mc{A}}$, and $\mr{Z}^0_*\bs{\mc{C}}$ is the $\KK$-linear category $\mr{Z}^0\prns{\bs{\mc{C}}}$ underlying $\bs{\mc{C}}$.
The counit of the adjunction $\iota^0_* \dashv \mr{Z}^0_*$ provides us with a $\KK$-linear differential graded functor $\iota^0_*\mr{Z}^0_*\bs{\mc{C}} = \iota^0_*\mr{Z}^0\prns{\bs{\mc{C}}} \to \bs{\mc{C}}$.

Note that $\mod{\ab}_{\KK}\scr{\otimes}{}\on{-}\cat$ admits a symmetric monoidal structure: for each pair of objects $\prns{\bs{\mc{A}}, \bs{\mc{B}}}$, their tensor product $\bs{\mc{A}} \otimes \bs{\mc{B}}$ is the $\KK$-linear category with objects given by pairs $\prns{A, B} \in \bs{\mc{A}} \times \bs{\mc{B}}$ and with $\hom{}{}$-objects given by $\hom{\prns{A,B}}{\prns{A',B'}}_{\bs{\mc{A}}\otimes \bs{\mc{B}}} := \hom{A}{A'}_{\bs{\mc{A}}} \otimes \hom{B}{B'}_{\bs{\mc{B}}}$.
With this definition, $\iota^0_*$ underlies a symmetric monoidal functor.
It follows that, for $\bs{\mc{C}}^{\otimes} \in \calg{\dgcat^{\otimes}_{\KK}}$, we may promote the $\KK$-linear differential graded functor $\iota^0_*\mr{Z}^0\prns{\bs{\mc{C}}} \to \bs{\mc{C}}$ to a symmetric monoidal $\KK$-linear differential graded functor.
By \ref{dg.4}$(iii)$, we have a symmetric monoidal functor $\altdgnerve{\iota^0_*\mr{Z}^0\prns{\bs{\mc{C}}}}^{\otimes} \to \altdgnerve{\bs{\mc{C}}}^{\otimes}$.
It remains to observe that, unwinding the construction of $\altdgnerve{}$ described in \ref{dg.1}$(vii)$, we have $\altdgnerve{\iota^0_*\mr{Z}^0\prns{\bs{\mc{C}}}}^{\otimes} = \mr{Z}^0\prns{\bs{\mc{C}}}^{\otimes}$.
\end{proof}

\section{Deriving Abelian categories}

\begin{motivation}
Let $\mb{A}^{\otimes}$ be a symmetric monoidal Abelian category whose tensor product is exact in each variable.
The bounded derived category $\mr{D}^{\mr{b}}\prns{\mb{A}}$ inherits a symmetric monoidal structure from $\mb{A}^{\otimes}$.
In this section, we construct an enhancement of this symmetric monoidal triangulated category, i.e., a stable symmetric monoidal qcategory $\D{\mb{A}}^{\mr{b}}\scr{\otimes}{}$ whose homotopy category recovers $\mr{D}^{\mr{b}}\prns{\mb{A}}\scr{\otimes}{}$ and which enjoys a suitable universal property in this regard.

There are natural situations in which the qcategory $\D{\mb{A}}^{\mr{b}}$ is simply too small for a given construction to work and one is obliged to embed it into a larger qcategory to avoid this obstacle.
For instance, the Adjoint Functor Theorem (\cite[5.5.2.9]{Lurie_higher-topos}) only applies to locally $\mf{U}$-presentable qcategories. 
We therefore also study the compatibility of the Verdier quotient $\D{\mb{A}}^{\mr{b}}$ with the unbounded derived category $\D{\ind{\mb{A}}}$ of its ind-completion.
Under a mild finiteness hypothesis on $\mb{A}$, the Yoneda embedding induces a fully faithful (symmetric monoidal) functor $\D{\mb{A}}^{\mr{b}}\scr{(\otimes)}{} \hookrightarrow \D{\ind{\mb{A}}}\scr{(\otimes)}{}$.
\end{motivation}

\begin{summary}
In Definition \ref{ab.1}, we define the bounded cochain and derived qcategories of $\mb{A}$.
Proposition \ref{ab.2} equips $\D{\mb{A}}^{\mr{b}}$ with a symmetric monoidal structure enjoying the correct universal property.
After recalling in Definition \ref{ab.3} the notions of Grothendieck Abelian categories and the injective model structure on unbounded complexes in such categories, we mention in Lemma \ref{ab.4} a sufficient condition for the unbounded derived qcategory of a Grothendieck Abelian category to inherit a symmetric monoidal structure.
Proposition \ref{ab.5} provides sufficient conditions for the (symmetric monoidal) bounded derived qcategory $\D{\mb{A}}^{\mr{b}}\scr{(\otimes)}{}$, constructed as a Verdier quotient, to be compatible with the (symmetric monoidal) unbounded derived category $\D{\ind{\mb{A}}}\scr{(\otimes)}{}$ of the ind-completion of $\mb{A}$, constructed as the (symmetric monoidal) qcategory underlying the injective model structure on $\cpx{\ind{\mb{A}}}\scr{(\otimes)}{}$.
\end{summary}

\begin{defn}
\label{ab.1}
Let $\mb{A}$ be a $\mf{U}$-small Abelian category.
\begin{enumerate}
\item
The category $\cpx{\mb{A}}^{\mr{b}}$ \resp{$\cpx{\mb{A}}$} of bounded \resp{unbounded} cochain complexes in $\mb{A}$ is a $\mf{U}$-small differential graded category.
Its differential graded nerve $\K{\mb{A}}^{\mr{b}} := \dgnerve{\cpx{\mb{A}}^{\mr{b}}}$ \resp{$\K{\mb{A}} := \dgnerve{\cpx{\mb{A}}}$}, as defined in \cite[1.3.1.6, 1.3.4.5]{Lurie_higher-algebra}, is a stable qcategory (\cite[1.3.2.10]{Lurie_higher-algebra}) and its homotopy category is the usual bounded \resp{unbounded} cochain homotopy category of $\mb{A}$ (\cite[1.3.1.11]{Lurie_higher-algebra}).
\item
Let $\iota: \on{\mc{A}c}\prns{\mb{A}} \hookrightarrow \K{\mb{A}}^{\mr{b}}$ denote the inclusion of the full subqcategory spanned by the acyclic complexes.
The \emph{bounded derived qcategory}  of $\mb{A}$ is the Verdier quotient $\D{\mb{A}}^{\mr{b}} := \K{\mb{A}}^{\mr{b}}/\on{\mc{A}c\prns{A}}$.
By \ref{thm.3}, the homotopy category of $\D{\mb{A}}^{\mr{b}}$ is the usual bounded derived category of $\mb{A}$.
\item
Suppose $\mb{A}^{\otimes}$ is a symmetric monoidal structure on $\mb{A}$.
Then $\cpx{\mb{A}}^{\mr{b}}$ \resp{$\cpx{\mb{A}}$} inherits a symmetric monoidal structure given informally by
\begin{equation*}
\label{ab.1.1}
\prns{K \otimes L}^n 
:= 
\bigoplus_{r \in \mb{Z}} \prns{K^r \otimes L^{n-r}},
\qquad
\d\prns{x \otimes y}
:=
\d\prns{x} \otimes y + \prns{-1}^{\deg\prns{x}}x \otimes \d\prns{y}.
\end{equation*}
By \cite[1.3.4.5, 4.1.3.4]{Lurie_higher-algebra}, there is a canonical symmetric monoidal structure $\K{\mb{A}}^{\mr{b}}\scr{\otimes}{}$ with respect to which the localization $\cpx{\mb{A}}^{\mr{b}}\scr{\otimes}{} \to \K{\mb{A}}^{\mr{b}}\scr{\otimes}{}$ is symmetric monoidal, and similarly for $\cpx{\mb{A}}\scr{\otimes}{} \to \K{\mb{A}}\scr{\otimes}{}$.
\item
In particular, in the setting of $(iii)$, if $\prns{-} \otimes \prns{-}$ is exact separately in each variable, $K$ and $L$ are two bounded-above cochain complexes in $\mb{A}$ and $L$ is acyclic, then $K \otimes L$, being the total complex of a double complex with acyclic columns and bounded diagonals, is acyclic.
\end{enumerate}
\end{defn}

\begin{prop}
\label{ab.2}
If $\mb{A}^{\otimes}$ is a $\mf{U}$-small symmetric monoidal Abelian category in which $\prns{-} \otimes \prns{-}$ is exact separately in each variable, then there exists a symmetric monoidal Verdier quotient $\D{\mb{A}}^{\mr{b}}\scr{\otimes}{}$ of $\K{\mb{A}}^{\mr{b}}\scr{\otimes}{}$ by $\on{\mc{A}c}\prns{\mb{A}}$.
\end{prop}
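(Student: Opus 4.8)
The plan is to present the statement as an instance of Corollary~\ref{thm.4}, applied to the symmetric monoidal qcategory $\K{\mb{A}}^{\mr{b}}\scr{\otimes}{}$ of \ref{ab.1}$(iii)$ and the inclusion $\iota: \on{\mc{A}c}\prns{\mb{A}} \hookrightarrow \K{\mb{A}}^{\mr{b}}$ of \ref{ab.1}$(ii)$; the functor $\pi^{\otimes}: \K{\mb{A}}^{\mr{b}}\scr{\otimes}{} \to \prns{\K{\mb{A}}^{\mr{b}}/\on{\mc{A}c}\prns{\mb{A}}}\scr{\otimes}{} = \D{\mb{A}}^{\mr{b}}\scr{\otimes}{}$ that \ref{thm.4} then produces is, by Definition~\ref{thm.5}, the required symmetric monoidal Verdier quotient. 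Invoking \ref{thm.4} amounts to checking three things: $(a)$ that $\K{\mb{A}}^{\mr{b}}\scr{\otimes}{}$ is a $\mf{U}$-small \emph{stable} symmetric monoidal qcategory, i.e., an object of $\calg{\qcat^{\mr{Ex}, \otimes}}$; $(b)$ that $\on{\mc{A}c}\prns{\mb{A}}$ is a stable subqcategory of $\K{\mb{A}}^{\mr{b}}$; and $(c)$ that $C \otimes A \in \on{\mc{A}c}\prns{\mb{A}}$ whenever $C \in \K{\mb{A}}^{\mr{b}}$ and $A \in \on{\mc{A}c}\prns{\mb{A}}$.

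For $(a)$ I would argue as in the proof of \ref{dg.5}. The underlying qcategory $\K{\mb{A}}^{\mr{b}}$ is stable by \ref{ab.1}$(i)$ and $\K{\mb{A}}^{\mr{b}}\scr{\otimes}{}$ is symmetric monoidal by \ref{ab.1}$(iii)$, so, by the characterization of $\calg{\qcat^{\mr{Ex}, \otimes}}$ (\cite[5.4.7]{Lurie_DAGVIII}, \cite[4.8.1.9]{Lurie_higher-algebra}), it remains to observe that each fibre of $\K{\mb{A}}^{\mr{b}}\scr{\otimes}{}$ over an object $\ang{n} \in \fin$, being a finite product of copies of $\K{\mb{A}}^{\mr{b}}$, is stable (\cite[1.1.4.3]{Lurie_higher-algebra}), and that the endofunctor $C \otimes \prns{-}$ of $\K{\mb{A}}^{\mr{b}}$ preserves finite colimits for each $C$. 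Unwinding the construction of \ref{ab.1}$(iii)$, this last endofunctor is induced by the additive differential graded endofunctor $C \otimes \prns{-}$ of $\cpx{\mb{A}}^{\mr{b}}$, which commutes with translations and with mapping cones of closed morphisms of degree zero; hence the induced functor preserves zero objects and cofiber sequences and is exact. I expect this to be the only step requiring real care, and even it reduces, via the construction of $\K{\mb{A}}^{\mr{b}}\scr{\otimes}{}$, to the elementary fact that tensoring a cochain complex with a fixed cochain complex commutes with the mapping-cone construction; in particular, no exactness hypothesis on $\mb{A}$ enters here.

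For $(b)$, the full subqcategory $\on{\mc{A}c}\prns{\mb{A}}$ contains a zero object and is closed in $\K{\mb{A}}^{\mr{b}}$ under translations and under cofibers of morphisms between its objects (the cofiber of such a morphism being represented by a mapping cone, which is acyclic by the associated long exact cohomology sequence), so it is a stable subqcategory. For $(c)$, a bounded cochain complex is in particular bounded above, so \ref{ab.1}$(iv)$ applies and shows that, for a bounded complex $C$ and a bounded acyclic complex $A$, the cochain complex underlying $C \otimes A$ is acyclic, whence $C \otimes A \in \on{\mc{A}c}\prns{\mb{A}}$; this is the single point in the argument where the hypothesis that $\prns{-} \otimes \prns{-}$ be exact separately in each variable is used. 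With $(a)$, $(b)$ and $(c)$ in hand, Corollary~\ref{thm.4} completes the proof.
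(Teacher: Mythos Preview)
Your proposal is correct and follows essentially the same approach as the paper: apply Corollary~\ref{thm.4}, with the key hypothesis $C \otimes A \in \on{\mc{A}c}\prns{\mb{A}}$ supplied by \ref{ab.1}$(iv)$. The paper's proof is a single sentence that takes your points $(a)$ and $(b)$ as implicit in the setup of \ref{ab.1}, whereas you spell them out explicitly; in particular, your verification that the tensor product on $\K{\mb{A}}^{\mr{b}}$ is exact in each variable fills in a detail the paper leaves to the reader.
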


\begin{proof}
Observe that $\prns{-} \otimes \prns{-}$ preserves $\on{\mc{A}c}\prns{\mb{A}}$ separately in each variable and apply \ref{thm.4}.
\end{proof}

\begin{defn}
\label{ab.3}
Let $\mb{A}$ be a locally $\mf{U}$-small Abelian category.
\begin{enumerate}
\item
The category $\mb{A}$ is \emph{$\mf{U}$-Grothendieck Abelian} if it is an Abelian, locally $\mf{U}$-presentable category in which $\aleph_0$-filtered $\mf{U}$-colimits preserve finite limits.
By \cite[3.10]{Beke_sheafifiable-homotopy}, this is equivalent to the classical definition as an AB5 category with a generator.
If $\mb{A}$ is essentially $\mf{U}$-small, then its ind-completion $\ind{\mb{A}}$ is $\mf{U}$-Grothendieck Abelian.
\item
If $\mb{A}$ is $\mf{U}$-Grothendieck Abelian, then $\cpx{\mb{A}}$ admits a $\mf{U}$-combinatorial model structure (\cite[A.2.6.1]{Lurie_higher-topos}) whose cofibrations and weak equivalences are the monomorphisms and quasi-isomorphisms, respectively (\cite[3.13]{Beke_sheafifiable-homotopy}), called the \emph{injective model structure} and denoted by $\cpx{\mb{A}}\scr{}{\mr{inj}}$.
Its homotopy category is the unbounded derived category of $\mb{A}$.
Let $\D{\mb{A}}$ denote the stable, locally $\mf{U}$-presentable qcategory underlying $\cpx{\mb{A}}\scr{}{\mr{inj}}$ (\cite[1.3.4.22]{Lurie_higher-algebra}).
\item
If $\mb{A}$ is essentially $\mf{U}$-small and $\mb{A}^{\otimes}$ is a symmetric monoidal structure such that $\prns{-} \otimes \prns{-}$ is exact separately in each variable, then there is a canonical symmetric monoidal structure $\ind{\mb{A}}\scr{\otimes}{}$ such that the Yoneda embedding $\mb{A}^{\otimes} \hookrightarrow \ind{\mb{A}}\scr{\otimes}{}$ is symmetric monoidal and $\prns{-} \otimes \prns{-}$ is exact and $\mf{U}$-cocontinuous separately in each variable on $\ind{\mb{A}}$ (\cite[Expos\'e I, 8.9.8]{SGA4a}, \cite[4.8.1.13]{Lurie_higher-algebra}).
\item
An object $A \in \mb{A}$ is \emph{Noetherian} if each set of subobjects of $A$ contains a maximal element, and $\mb{A}$ is \emph{Noetherian} if each of its objects is Noetherian.
\end{enumerate}
\end{defn}

\begin{lemma}
\label{ab.4}
Let $\mb{A}^{\otimes}$ be a $\mf{U}$-cocomplete symmetric monoidal Abelian category such that $\prns{-} \otimes \prns{-}$ is exact and $\mf{U}$-cocontinuous separately in each variable and such that $\aleph_0$-filtered $\mf{U}$-colimits are exact.
Then $\prns{-} \otimes \prns{-}$ preserves quasi-isomorphisms in $\cpx{\mb{A}}\scr{\otimes}{}$ separately in each variable.
\end{lemma}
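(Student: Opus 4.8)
The plan is to reduce the assertion to the classical fact that an acyclic complex remains acyclic after tensoring with a bounded complex, and then to use the two standing hypotheses on $\mb{A}^{\otimes}$ --- $\mf{U}$-cocontinuity of $\prns{-} \otimes \prns{-}$ and exactness of $\aleph_0$-filtered $\mf{U}$-colimits --- to pass from bounded complexes to arbitrary ones. By symmetry of $\mb{A}^{\otimes}$ it suffices to treat the left variable. First I would reduce ``preserves quasi-isomorphisms'' to ``preserves acyclic complexes'': a morphism $f: K \to L$ of $\cpx{\mb{A}}$ is a quasi-isomorphism if and only if $\on{cone}\prns{f}$ is acyclic, by the long exact cohomology sequence, and there is a natural isomorphism of complexes $\on{cone}\prns{f} \otimes M \cong \on{cone}\prns{f \otimes \id_M}$ obtained by reindexing the defining direct sums and matching Koszul signs. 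Hence it is enough to show that $C \otimes M$ is acyclic whenever $C \in \cpx{\mb{A}}$ is acyclic and $M \in \cpx{\mb{A}}$ is arbitrary.

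Next I would reduce to bounded $M$. Let $\tau^{\leq k}M$ denote the canonical truncation of $M$ --- the subcomplex equal to $M^i$ in degrees $i < k$, to $\ker\prns{M^k \to M^{k+1}}$ in degree $k$, and to $0$ in higher degrees --- and $\sigma_{\geq -j}M$ the stupid truncation --- the subcomplex agreeing with $M$ in degrees $\geq -j$ and zero below. The complexes $N_{j,k} := \sigma_{\geq -j}\prns{\tau^{\leq k}M}$ are bounded subcomplexes of $M$, they form an $\aleph_0$-filtered diagram under the inclusions, and $M = \colim_{(j,k)} N_{j,k}$, the colimit being computed degreewise. Because the tensor product of complexes is built from $\prns{-} \otimes \prns{-}$ on $\mb{A}$ together with $\mf{U}$-small coproducts, the functor $C \otimes \prns{-}$ is $\mf{U}$-cocontinuous, so $C \otimes M$ is the $\aleph_0$-filtered colimit of the complexes $C \otimes N_{j,k}$; and since $\aleph_0$-filtered $\mf{U}$-colimits are exact, $\mr{H}^n$ commutes with them, so a filtered colimit of acyclic complexes is acyclic. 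It therefore remains to prove that $C \otimes N$ is acyclic when $C$ is acyclic and $N$ is bounded.

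For that case I would induct on the number of nonzero terms of $N$. If $N$ is concentrated in a single degree $i$, then $C \otimes N$ is, up to a shift, the complex obtained by applying the exact functor $\prns{-} \otimes N^i$ to $C$ termwise, so $\mr{H}^p$ of it is $\mr{H}^p\prns{C} \otimes N^i = 0$. Otherwise, choosing $m$ so that the stupid-truncation short exact sequence $0 \to \sigma_{\geq m}N \to N \to N / \sigma_{\geq m}N \to 0$ has both outer terms with strictly fewer nonzero terms than $N$, observe that this sequence is split in each degree, so applying the additive functor $C \otimes \prns{-}$ --- which in each degree is a finite direct sum, as $N$ is bounded --- yields a short exact sequence of complexes; the associated long exact cohomology sequence and the inductive hypothesis then give that $C \otimes N$ is acyclic.

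The hard part is not any one computation but the bookkeeping: one must choose truncations that are genuine subcomplexes (the canonical truncation from above, the stupid truncation from below) so that $M$ is visibly an $\aleph_0$-filtered colimit of bounded subcomplexes, and one must see that the two hypotheses on $\mb{A}^{\otimes}$ are exactly what make this presentation compatible with tensoring and with the formation of cohomology. As an alternative to the last two steps, one could instead reduce both $C$ and $M$ to bounded-above complexes by the analogous colimit along the truncations $\tau^{\leq k}$ --- noting that the canonical truncations of an acyclic complex are still acyclic --- and then invoke the double-complex argument already recorded in \ref{ab.1}$(iv)$.
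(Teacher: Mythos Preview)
Your argument is correct and follows the same overall strategy as the paper: reduce to showing that tensoring preserves acyclic complexes, then present the relevant complexes as $\aleph_0$-filtered colimits of truncations and use exactness of filtered colimits together with $\mf{U}$-cocontinuity of the tensor product. The paper's reduction to acyclics is phrased via the observation that $K \otimes \prns{-}$, being additive, preserves cochain homotopy equivalences and hence induces a triangulated endofunctor of $\ho{\K{\mb{A}}}$; your cone argument is an explicit unpacking of the same fact.

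The one genuine difference is in the endgame. Your primary route truncates only the non-acyclic factor $M$ to a \emph{bounded} complex (via a combination of canonical and stupid truncations) and then inducts on its length, handling the base case by exactness of $\prns{-} \otimes N^i$. The paper instead truncates \emph{both} complexes to bounded-above via $\trun^{\leq r}$ --- noting that the canonical truncations of the acyclic complex remain acyclic --- and then invokes \ref{ab.1}$(iv)$ directly, which is precisely the alternative you sketch in your last paragraph. The paper's route is slightly shorter since it reuses \ref{ab.1}$(iv)$ rather than redoing the induction, but your induction has the minor advantage of not needing to truncate the acyclic complex at all.
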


\begin{proof}
Let $K \in \cpx{\mb{A}}$.
The additive endofunctor $K \otimes \prns{-}$ of $\cpx{\mb{A}}$ preserves cochain homotopy equivalences and therefore induces a triangulated endofunctor of $\ho{\K{\mb{A}}}$.
It therefore suffices to show that $K \otimes \prns{-}$ preserves acyclic complexes.
Suppose $L \in \cpx{\mb{A}}$ is acyclic.
Write $K \simeq \colim_{r \in \mb{Z}} \trun^{\leq r} K$ and $L \simeq \colim_{s \in \mb{Z}} \trun^{\leq s}L$ as $\aleph_0$-filtered colimits of their truncations with respect to the natural t-structure.
These are bounded above and $\trun^{\leq s}L$ is acyclic for each $s \in \mb{Z}$.
The claim follows from the hypotheses, \ref{ab.1}$(iv)$ and the isomorphisms
$
K \otimes L
\simeq
\prns{\colim_{r \in \mb{Z}}\trun^{\leq r}K} \otimes \prns{\colim_{s \in \mb{Z}} \trun^{\leq s}L}
\simeq
\colim_{r \in \mb{Z}} \colim_{s \in \mb{Z}} \prns{\trun^{\leq r}K \otimes \trun^{\leq s}L}
$.
\end{proof}

\begin{prop}
\label{ab.5}
Let $\mb{A}$ be an essentially $\mf{U}$-small Abelian category.
\begin{enumerate}
\item
The Yoneda embedding $\mb{A} \hookrightarrow \ind{\mb{A}}$ induces an exact functor $\iota : \D{\mb{A}}^{\mr{b}} \to \D{\ind{\mb{A}}}$.
\item
If $\mb{A}$ is Noetherian, then $\iota$ is fully faithful and its essential image is spanned by the cohomologically bounded cochain complexes whose cohomology objects belong to the essential image of the Yoneda embedding $\mb{A} \hookrightarrow \ind{\mb{A}}$.
\item
If $\mb{A}^{\otimes}$ is a symmetric monoidal structure on $\mb{A}$ such that $\prns{-} \otimes \prns{-}$ is exact separately in each variable, then $\iota$ underlies a symmetric monoidal functor.
\end{enumerate}
\end{prop}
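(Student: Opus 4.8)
The plan is to treat $(i)$ and $(iii)$ as formal consequences of the constructions already in place and to concentrate the work in $(ii)$. For $(i)$: the Yoneda embedding $j\colon\mb{A}\hookrightarrow\ind{\mb{A}}$ is exact (fully faithful, and preserving finite limits and finite colimits), so applying it degreewise yields a differential graded functor $\cpx{\mb{A}}^{\mr{b}}\to\cpx{\ind{\mb{A}}}$, hence an exact functor $\K{\mb{A}}^{\mr{b}}=\dgnerve{\cpx{\mb{A}}^{\mr{b}}}\to\dgnerve{\cpx{\ind{\mb{A}}}}$, which I compose with the localization $\dgnerve{\cpx{\ind{\mb{A}}}}\to\D{\ind{\mb{A}}}$ inverting the quasi-isomorphisms of $\cpx{\ind{\mb{A}}}\scr{}{\mr{inj}}$ (a cochain homotopy equivalence being a quasi-isomorphism, this composite is defined). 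Since $j$ is exact, the composite $\K{\mb{A}}^{\mr{b}}\to\D{\ind{\mb{A}}}$ carries acyclic bounded complexes to zero objects; as by \ref{thm.3} the Verdier quotient $\D{\mb{A}}^{\mr{b}}=\K{\mb{A}}^{\mr{b}}/\on{\mc{A}c}\prns{\mb{A}}$ is the localization of $\K{\mb{A}}^{\mr{b}}$ at the morphisms whose cofiber lies in $\on{\mc{A}c}\prns{\mb{A}}$, the universal property of localization produces the exact functor $\iota$. For $(iii)$: by \ref{ab.3}$(iii)$ the functor $j$ underlies a symmetric monoidal functor $\mb{A}^{\otimes}\to\ind{\mb{A}}\scr{\otimes}{}$, and the construction $\cpx{-}$ of \ref{ab.1}$(iii)$ respects symmetric monoidal structures, so we obtain a symmetric monoidal differential graded functor $\cpx{\mb{A}}^{\mr{b}}\scr{\otimes}{}\to\cpx{\ind{\mb{A}}}\scr{\otimes}{}$; by \ref{ab.4} (applicable because, by \ref{ab.3}$(iii)$, the tensor product on $\ind{\mb{A}}$ is exact and $\mf{U}$-cocontinuous in each variable) the tensor product on $\cpx{\ind{\mb{A}}}$ preserves quasi-isomorphisms in each variable, so \cite[4.1.3.4]{Lurie_higher-algebra} endows $\D{\ind{\mb{A}}}$ with a symmetric monoidal structure, and — $\D{\mb{A}}^{\mr{b}}\scr{\otimes}{}$ being that of \ref{ab.2} and the above differential graded functor carrying bounded quasi-isomorphisms to quasi-isomorphisms — the universal property of symmetric monoidal localizations promotes $\iota$ to a symmetric monoidal functor.

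For $(ii)$, assume $\mb{A}$ Noetherian. Using the cohomology functors $\h^i$ and the fact that every object of $\D{\mb{A}}^{\mr{b}}$ is a finite iterated extension of shifts of objects of its heart $\mb{A}$ — with $\iota$ exact, commuting with $\h^i$ and restricting on hearts to $j$ — together with the fact that the mapping spaces of a stable qcategory carry cofiber sequences to fiber sequences in each variable, a two-variable d\'evissage reduces the full faithfulness of $\iota$ to the claim that $\iota$ induces an equivalence $\map{A}{B\brk{m}}_{\D{\mb{A}}^{\mr{b}}}\isom\map{jA}{jB\brk{m}}_{\D{\ind{\mb{A}}}}$ for all $A,B\in\mb{A}$ and $m\in\mb{Z}$; equivalently, that $\on{Ext}^i_{\mb{A}}\prns{A,B}\to\on{Ext}^i_{\ind{\mb{A}}}\prns{jA,jB}$ is an isomorphism for all $i\geq0$.

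This $\on{Ext}$-comparison is the heart of the matter, and the one place the Noetherian hypothesis is genuinely needed. Because $\mb{A}$ is Abelian, hence idempotent complete, $j$ identifies $\mb{A}$ with the full subqcategory $\prns{\ind{\mb{A}}}_{\aleph_0}$ of $\aleph_0$-presentable objects; because $\mb{A}$ is Noetherian, a subobject of an object of $\mb{A}$ in $\ind{\mb{A}}$ — being the filtered union of those of its subobjects that lie in $\mb{A}$ — again lies in $\mb{A}$, so $\mb{A}$ is closed in $\ind{\mb{A}}$ under subobjects, quotients and extensions (an extension of $\aleph_0$-presentable objects is $\aleph_0$-presentable). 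I would then deduce the $\on{Ext}$-comparison by a d\'evissage on the length of a Yoneda extension: given an $n$-fold extension in $\ind{\mb{A}}$ of $A$ by $B$ with $A,B\in\mb{A}$, replace the term surjecting onto $A$ by a subobject lying in $\mb{A}$ that still surjects onto $A$ (possible since $A$ is $\aleph_0$-presentable and that term is a filtered colimit of objects of $\mb{A}$), cut down the kernel accordingly, and iterate to obtain an equivalent extension in $\mb{A}$; injectivity is handled symmetrically using the long exact $\on{Ext}$-sequences and induction on $n$ — or one may simply invoke the standard comparison theorem for derived categories of Serre subcategories. Granting full faithfulness, the essential image is then as claimed: $\iota K$ is cohomologically bounded with $\h^i\prns{\iota K}=j\h^i\prns{K}$, and conversely any cohomologically bounded $M\in\D{\ind{\mb{A}}}$ with every $\h^i\prns{M}$ in the essential image of $j$ is a finite iterated extension of the objects $\iota\prns{\h^i\prns{M}\brk{-i}}=j\h^i\prns{M}\brk{-i}$, whose connecting maps lift along the fully faithful $\iota$, exhibiting $M$ as $\iota K$ for the corresponding iterated extension $K$ in $\D{\mb{A}}^{\mr{b}}$. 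The main obstacle, then, is the $\on{Ext}$-comparison; everything else is bookkeeping with the localizations and symmetric monoidal structures already constructed.
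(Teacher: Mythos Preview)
Your treatment of $(i)$ and $(iii)$ matches the paper's almost verbatim: construct the functor from the degreewise Yoneda embedding, factor through the Verdier quotient via its universal property, and use \ref{ab.4} together with \cite[4.1.3.4]{Lurie_higher-algebra} for the symmetric monoidal structure.

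For $(ii)$ you take a genuinely different route. The paper's proof is a two-line citation: it observes that $\ho{\iota}$ is the derived functor of the Yoneda embedding, invokes \cite[2.2]{Huber_calculation-of} for full faithfulness and the essential image at the level of homotopy categories, and then applies \cite[5.10]{Blumberg-Gepner-Tabuada_universal-characterization} to conclude full faithfulness of $\iota$ itself from that of $\ho{\iota}$. You instead sketch a direct proof: a two-variable d\'evissage on the t-structure reduces to the $\on{Ext}$-comparison for objects of $\mb{A}$, and you then outline the argument (closure of $\mb{A}$ under subobjects in $\ind{\mb{A}}$ via Noetherianness, replacing terms of a Yoneda extension by $\aleph_0$-presentable subobjects) that is precisely the content of Huber's theorem. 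Your approach has the virtue of being self-contained and of making transparent exactly where the Noetherian hypothesis is consumed; the paper's approach is terser and defers the analytic work to an established reference. Both are correct, and your reduction of full faithfulness on mapping spaces to the $\pi_0$-statement in all shifts is the same mechanism encoded in \cite[5.10]{Blumberg-Gepner-Tabuada_universal-characterization}.
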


\begin{proof}
The natural functor $\cpx{\mb{A}}^{\mr{b}} \to \cpx{\ind{\mb{A}}}$ preserves cochain homotopy equivalences and quasi-isomorphisms, whence a functor $\iota$, which is exact by \cite[1.4.2.14]{Lurie_higher-algebra}.
The induced triangulated functor $\ho{\iota}: \ho{\D{\mb{A}}^{\mr{b}}} \to \ho{\D{\ind{\mb{A}}}}$ is the derived functor of the Yoneda embedding $\mb{A} \hookrightarrow \ind{\mb{A}}$ by construction, so $(ii)$ follows from \cite[2.2]{Huber_calculation-of} and \cite[5.10]{Blumberg-Gepner-Tabuada_universal-characterization}.
Finally, $\D{\ind{\mb{A}}}$ inherits a symmetric monoidal structure from $\cpx{\ind{\mb{A}}}\scr{\otimes}{}$ by \ref{ab.4}, and \ref{thm.4} provides a canonical symmetric monoidal functor $\D{\mb{A}}^{\mr{b}}\scr{\otimes}{} \to \D{\ind{\mb{A}}}\scr{\otimes}{}$, whose underlying functor must, by the universal property of the Verdier quotient $\D{\mb{A}}^{\mr{b}}$, be equivalent to $\iota$.
\end{proof}

\section{Generators}

\begin{motivation}
We continue our study of derived qcategories of Abelian categories, specifically considering the problem of identifying the $\aleph_0$-presentable objects of the unbounded derived qcategory $\D{\ind{\mb{A}}}$ of the ind-completion of a $\mf{U}$-small Abelian category $\mb{A}$.
If, in addition to assuming that $\mb{A}$ is Noetherian, we impose a rather strong finiteness hypothesis, namely, a uniform bound on the homological dimension of the objects of $\mb{A}$, then the essential image of the fully faithful embedding $\D{\mb{A}}^{\mr{b}} \hookrightarrow \D{\ind{\mb{A}}}$ is the full subqcategory $\D{\ind{\mb{A}}}\scr{}{\aleph_0}$ spanned by the $\aleph_0$-presentable objects.
This provides an equivalence $\ind{\D{\mb{A}}^{\mr{b}}} \simeq \D{\ind{\mb{A}}}$.
\end{motivation}

\begin{summary}
We begin by defining the homological dimensions of an object $A \in \mb{A}$ and of $\mb{A}$ itself (\ref{gen.1}), and explaining the relevance of homological dimension to questions of presentability in the derived qcategory (\ref{gen.2}, \ref{gen.3}, \ref{gen.5}).
In Proposition \ref{gen.6}, we construct the desired equivalence $\ind{\D{\mb{A}}^{\mr{b}}} \simeq \D{\ind{\mb{A}}}$ assuming $\mb{A}$ is Noetherian of finite homological dimension.
Corollary \ref{gen.7} shows that, if $\mb{A}^{\otimes}$ is moreover symmetric monoidal and each of its objects is $\otimes$-dualizable, then the $\aleph_0$-presentable objects of $\D{\ind{\mb{A}}}$ are precisely the $\otimes$-dualizable ones.
\end{summary}

\begin{defn}
\label{gen.1}
Let $\mb{A}$ be an Abelian category.
We define the \emph{homological dimension of $A \in \mb{A}$} to be 
$
\on{hdim}\prns{A} 
:=
\sup \brc[1]{n \in \mb{Z}_{\geq0} \mid \exists B \in \mb{A}\ \brk{\ext{A}{B}^n_{\mb{A}} \neq 0}}
\in
\mb{Z}_{\geq0} \cup \brc{\infty}
$
and the \emph{homological dimension of $\mb{A}$} to be $\on{hdim}\prns{\mb{A}} := \sup \brc{\on{hdim}\prns{A} \mid A \in \mb{A}} \in \mb{Z}_{\geq0} \cup \brc{\infty}$.
\end{defn}

\begin{lemma}
\label{gen.2}
Let $\mb{A}$ be an essentially $\mf{U}$-small Abelian category, $A \in \mb{A}$ and $n \in \mb{Z}$.
If $\mb{A}$ is Noetherian and $\on{hdim}\prns{A} < n$, then $\ext{A}{B}^n_{\ind{\mb{A}}} = 0$ for each $B \in \ind{\mb{A}}$.
\end{lemma}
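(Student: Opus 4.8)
The plan is to establish the vanishing first for coefficients $B$ lying in $\mb{A}$, where it amounts to the hypothesis on $\on{hdim}(A)$ together with \ref{ab.5}, and then to propagate it to arbitrary $B \in \ind{\mb{A}}$ using that $A$ is a compact (i.e., $\aleph_0$-presentable) object of $\ind{\mb{A}}$.

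For the first step I would invoke \ref{ab.5}$(ii)$: since $\mb{A}$ is Noetherian, the exact functor $\D{\mb{A}}^{\mr{b}} \to \D{\ind{\mb{A}}}$ is fully faithful. Passing to homotopy categories and taking the sets of maps between objects concentrated in cohomological degree $0$ — which in the bounded derived category of $\mb{A}$ and in the derived category of $\ind{\mb{A}}$ compute the respective Yoneda $\on{ext}$-groups — one obtains isomorphisms $\ext{A}{C}^m_{\mb{A}} \isom \ext{A}{C}^m_{\ind{\mb{A}}}$ for every $C \in \mb{A}$ and every $m \in \mb{Z}$. Since $\on{hdim}(A) < n$, the left-hand group vanishes whenever $m \geq n$; hence $\ext{A}{C}^m_{\ind{\mb{A}}} = 0$ for every $C \in \mb{A}$ and every $m \geq n$.

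For the second step, let $B \in \ind{\mb{A}}$ and write $B \simeq \colim_i B_i$ as a filtered colimit of objects $B_i$ of $\mb{A}$. Because $\mb{A}$ is Noetherian, $\ind{\mb{A}}$ is a locally Noetherian $\mf{U}$-Grothendieck Abelian category (\ref{ab.3}$(i)$), and I would use the fact that in such a category filtered colimits of injective objects are again injective. Choosing functorial injective resolutions $B_i \to I_i^{\bullet}$ in $\ind{\mb{A}}$ and passing to the filtered colimit over $i$ then produces a complex of injectives $\colim_i I_i^{\bullet}$ which, filtered colimits being exact, is an injective resolution of $B$. Since $A \in \mb{A}$ is compact in $\ind{\mb{A}}$, applying $\mor{A}{-}_{\ind{\mb{A}}}$ termwise turns $\colim_i I_i^{\bullet}$ into the filtered colimit of the complexes obtained by applying $\mor{A}{-}_{\ind{\mb{A}}}$ to $I_i^{\bullet}$, and exactness of filtered colimits lets this commute with passage to $n$th cohomology; hence $\ext{A}{B}^n_{\ind{\mb{A}}} \simeq \colim_i \ext{A}{B_i}^n_{\ind{\mb{A}}}$, which vanishes by the first step.

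The only genuinely nontrivial ingredient is the statement that filtered colimits of injectives remain injective in the locally Noetherian category $\ind{\mb{A}}$ — equivalently, that $\ext{A}{-}^n_{\ind{\mb{A}}}$ commutes with filtered colimits when $A \in \mb{A}$ — and I expect supplying a proof or citation for this to be the main point demanding care, since it is precisely where the Noetherian hypothesis is used essentially. If one prefers to avoid it, the conclusion can be reached directly on Yoneda extensions: given an $n$-fold extension of $A$ by $B \simeq \colim_i B_i$ in $\ind{\mb{A}}$, one successively replaces its terms by finitely generated subobjects — which lie in $\mb{A}$ because $\mb{A}$ is Noetherian and coincides with the full subcategory of finitely presented objects of $\ind{\mb{A}}$ — so as to exhibit the extension class as pushed forward from an $n$-fold extension of $A$ by an object of $\mb{A}$, which is trivial by the first step.
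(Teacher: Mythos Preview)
Your proof is correct and follows essentially the same two-step strategy as the paper: first reduce to $B \in \mb{A}$ via the comparison $\ext{A}{C}^m_{\mb{A}} \simeq \ext{A}{C}^m_{\ind{\mb{A}}}$ (the paper cites Huber \cite[2.2]{Huber_calculation-of} directly, which is the content behind \ref{ab.5}$(ii)$), and then pass to arbitrary $B$ by showing that $\ext{A}{-}^n_{\ind{\mb{A}}}$ commutes with filtered colimits.

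The only difference lies in how that commutation is justified. The paper observes that $\ind{\mb{A}}$ is locally coherent in the sense of Lowen--Van den Bergh (since $\mb{A}$, being Abelian hence idempotent-complete, is exactly $\ind{\mb{A}}_{\aleph_0}$) and then invokes \cite[6.34]{Lowen-Van-den-Bergh_deformation-theory} as a black box. You instead sketch a direct proof via the classical fact that filtered colimits of injectives remain injective in a locally Noetherian Grothendieck category, building an injective resolution of $B$ as a filtered colimit of functorial injective resolutions of the $B_i$. Both routes are standard and equivalent in strength here; yours is more self-contained, while the paper's citation packages the argument in a form that also covers the locally coherent (not necessarily locally Noetherian) case. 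Your alternative Yoneda-extension argument is also viable, though as written it would need a bit more care to be airtight.
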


\begin{proof}
Abelian categories are idempotent complete, so the Yoneda embedding $\mb{A} \hookrightarrow \ind{\mb{A}}$ identifies $\mb{A}$ with the full subcategory $\ind{\mb{A}}_{\aleph_0} \subseteq \ind{\mb{A}}$ spanned by the $\aleph_0$-presentable objects.
Indeed, \cite[5.4.2.4]{Lurie_higher-topos} identifies $\ind{\mb{A}}_{\aleph_0}$ with the idempotent completion of $\mb{A}$ and \cite[5.1.4.9]{Lurie_higher-topos} shows that the idempotent completion is essentially unique.
It follows that $\ind{\mb{A}}_{\aleph_0}$ is Abelian and $\ind{\mb{A}}$ is locally $\mf{U}$-coherent in the sense of \cite[\S2.2]{Lowen-Van-den-Bergh_deformation-theory}, i.e., it admits $\mf{U}$-set of $\aleph_0$-presentable generators and the full subcategory spanned by $\aleph_0$-presentable objects is Abelian.
We may therefore apply \cite[6.34]{Lowen-Van-den-Bergh_deformation-theory}, which implies that $\ext{A}{-}^r_{\ind{\mb{A}}}: \ind{\mb{A}} \to \ab$ preserves $\aleph_0$-filtered $\mf{U}$-colimits for each $r \in \mb{Z}_{\geq0}$.

Let $B \in \ind{\mb{A}}$.
By definition of $\ind{\mb{A}}$, $B \simeq \colim_{\gamma \in \Gamma}B_{\gamma}$ for some $\mf{U}$-small $\aleph_0$-filtered diagram $\gamma \mapsto B_{\gamma}: \Gamma \to \mb{A}$.
We now have
\[
\ext{A}{B}^n_{\ind{\mb{A}}}
\simeq
\ext{A}{\colim_{\gamma \in \Gamma}B_{\gamma}}^n_{\ind{\mb{A}}}
\simeq
\colim_{\gamma \in \Gamma} \ext{A}{B_{\gamma}}^n_{\ind{\mb{A}}}
=
0,
\]
since $\ext{A}{B_{\gamma}}^n_{\mb{A}} = \ext{A}{B_{\gamma}}^n_{\ind{\mb{A}}}$ for each $\gamma \in \Gamma$  by \cite[2.2]{Huber_calculation-of}, and $n > \on{hdim}\prns{\mb{A}}$ by hypothesis.
\end{proof}

\begin{lemma}
\label{gen.3}
Let $\mb{A}$ be an essentially $\mf{U}$-small Abelian category, $\iota: \D{\mb{A}}^{\mr{b}} \to \D{\ind{\mb{A}}}$ the natural functor \textup{\ref{ab.5}}, and $A \in \mb{A}$.
If $\mb{A}$ is Noetherian and $\on{hdim}\prns{A} < \infty$, then $\iota A$ is $\aleph_0$-presentable.
\end{lemma}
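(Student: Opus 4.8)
The plan is to reduce the claim to a statement about mapping spaces and then bound those using the finite homological dimension hypothesis. Write $d := \on{hdim}\prns{A} < \infty$. Since $\D{\ind{\mb{A}}}$ is stable, the object $\iota A$ is $\aleph_0$-presentable if and only if $\map{\iota A}{-}_{\D{\ind{\mb{A}}}}$ preserves $\aleph_0$-filtered $\mf{U}$-colimits, and by the stable Yoneda lemma it suffices to check that each homotopy group $\pi_n\map{\iota A}{-}_{\D{\ind{\mb{A}}}}$ does so, i.e.\ that $\brk{\iota A}{-}\brk{-n}_{\D{\ind{\mb{A}}}} \cong \ho{\D{\ind{\mb{A}}}}\prns{\iota A, \prns{-}\brk{-n}}$ commutes with $\aleph_0$-filtered colimits for each $n \in \mb{Z}$. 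Here I would use that filtered colimits in $\D{\ind{\mb{A}}}$ are computed degreewise on the underlying complexes, which holds because $\ind{\mb{A}}$ is $\mf{U}$-Grothendieck Abelian and the injective model structure on $\cpx{\ind{\mb{A}}}$ presents $\D{\ind{\mb{A}}}$, so $\aleph_0$-filtered $\mf{U}$-colimits are exact there.

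The first key step is a devissage reducing from an arbitrary object $M \in \D{\ind{\mb{A}}}$ to the objects $B\brk{r}$ with $B \in \ind{\mb{A}}$. Using the natural t-structure on $\D{\ind{\mb{A}}}$ (whose heart is $\ind{\mb{A}}$), write $M \simeq \colim_r \trun^{\leq r}M$, and then filter each truncation by its cohomology objects. Because $\ext{A}{B}^m_{\ind{\mb{A}}}$ vanishes for $m > d$ by \ref{gen.2} and for $m < 0$ trivially, the spectral sequence (or iterated long exact sequences) computing $\brk{\iota A}{M\brk{-n}}$ from the $\h^r M \in \ind{\mb{A}}$ involves only finitely many potentially nonzero $\ext$-groups for each fixed $n$; moreover, by boundedness of the relevant range, only finitely many cohomology objects $\h^r M$ contribute. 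The second key step is that for $B \in \ind{\mb{A}}$ the functor $B \mapsto \ext{A}{B}^m_{\ind{\mb{A}}}$ preserves $\aleph_0$-filtered $\mf{U}$-colimits: this is exactly the assertion invoked in the proof of \ref{gen.2}, via \cite[6.34]{Lowen-Van-den-Bergh_deformation-theory}, using that $\ind{\mb{A}}$ is locally $\mf{U}$-coherent because $\mb{A}$ is Noetherian (so $\ind{\mb{A}}_{\aleph_0} \simeq \mb{A}$ is Abelian). Combining: filtered colimits of $M$'s induce filtered colimits of each $\h^r$, hence of each $\ext{A}{\h^r M}{}^m$, and since only finitely many such groups assemble (in a manner exact in $M$) to give $\brk{\iota A}{M\brk{-n}}$, the latter also commutes with $\aleph_0$-filtered colimits.

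I expect the main obstacle to be the bookkeeping in the devissage — specifically, ensuring that the passage from ``$\ext{A}{\h^r M}{}^m$ commutes with filtered colimits for each individual $r,m$'' to ``$\brk{\iota A}{M\brk{-n}}$ commutes with filtered colimits'' is legitimate when $M$ is merely cohomologically unbounded. The resolution is that $\iota A$ is a bounded object (it is quasi-isomorphic to a complex concentrated in degree $0$), and an object $X$ in a stable qcategory with a t-structure whose connective part is closed under the relevant colimits is $\aleph_0$-presentable as soon as $\brk{X}{-}\brk{m}$ kills sufficiently negative t-truncations and commutes with filtered colimits on the heart shifted into a bounded window; concretely, for fixed $n$, $\brk{\iota A}{M\brk{-n}} \cong \brk{\iota A}{\prns{\trun^{\leq n}M}\brk{-n}}$ because $A$ has projective dimension $d$ and $\ext{A}{-}^{\geq d+1} = 0$ in $\ind{\mb{A}}$, so one only ever sees a bounded truncation of $M$, and on bounded complexes the degreewise-filtered-colimit computation together with the two-term-at-a-time exactness finishes the argument. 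I would phrase the final step as: the collection of $M \in \D{\ind{\mb{A}}}$ for which $\brk{\iota A}{M\brk{-n}}$ commutes with $\aleph_0$-filtered $\mf{U}$-colimits for all $n$ is closed under shifts, finite colimits, and the bounded-above filtered colimits $\colim_r \trun^{\leq r}$, and contains $\ind{\mb{A}}$; since these generate $\D{\ind{\mb{A}}}$ under such operations, it is everything.
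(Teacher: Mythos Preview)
Your proposal is correct and takes essentially the same route as the paper: both use \ref{gen.2} and \cite[6.34]{Lowen-Van-den-Bergh_deformation-theory} together with the t-exactness of $\aleph_0$-filtered colimits in $\D{\ind{\mb{A}}}$ to see that each $\pi_n\map{\iota A}{-}_{\D{\ind{\mb{A}}}}$ is assembled from finitely many functors $\ext{A}{\h^r(-)}^s_{\ind{\mb{A}}}$, each preserving $\aleph_0$-filtered colimits. The paper streamlines the execution by checking only $\mf{U}$-coproducts via \cite[1.4.4.1(3)]{Lurie_higher-algebra} and by packaging your d\'evissage as the hypercohomology spectral sequence $\mr{E}^{r,s}_2 = \ext{A}{\h^rK}^s_{\ind{\mb{A}}} \Rightarrow \pi_0\map{A}{K\brk{r+s}}_{\D{\ind{\mb{A}}}}$, which is strongly convergent thanks to the uniform bound $0 \leq s \leq \on{hdim}\prns{A}$; incidentally, the phrasing in your final paragraph (``the collection of $M$ for which \ldots\ commutes with filtered colimits'') is ill-formed, since commuting with colimits is a property of the functor rather than of individual objects, but your second paragraph already contains the correct argument.
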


\begin{proof}
We identify $A$ with $\iota A$.
By \cite[1.4.4.1(3)]{Lurie_higher-algebra}, $A$ is $\aleph_0$-presentable in $\D{\ind{\mb{A}}}$ if and only if $\pi_0\map{A}{-}_{\D{\ind{\mb{A}}}}: \D{\ind{\mb{A}}} \to \D{\ab}$ preserves $\mf{U}$-coproducts.
Let $\brc{K_{\gamma}}_{\gamma \in \Gamma}$ be a $\mf{U}$-set of objects of $\D{\ind{\mb{A}}}$ and let $K := \coprod_{\gamma \in \Gamma} K_{\gamma}$.
By \ref{gen.2}, the hypercohomology spectral sequences
\begin{align}
\label{gen.5.1}
\mr{E}^{r,s}_2
= 
\ext{A}{\h^rK}^s_{\ind{\mb{A}}}
&\Longrightarrow
\pi_0\map{A}{K\brk{r+s}}_{\D{\ind{\mb{A}}}}
\\
\label{gen.5.2}
\prns{\mr{E}_{\gamma}}^{r,s}_2
=
\ext{A}{\h^rK_{\gamma}}^s_{\ind{\mb{A}}}
&\Longrightarrow
\pi_0\map{A}{K_{\gamma}\brk{r+s}}_{\D{\ind{\mb{A}}}}
\end{align}
are uniformly bounded and hence strongly convergent for each $\gamma \in \Gamma$.
As $\aleph_0$-filtered $\mf{U}$-colimits are exact in the $\mf{U}$-Grothendieck Abelian category $\ind{\mb{A}}$, they are t-exact in $\D{\ind{\mb{A}}}$, which, when combined with \cite[6.34]{Lowen-Van-den-Bergh_deformation-theory}, gives an isomorphism $\bigoplus_{\gamma \in \Gamma} \prns{\mr{E}_{\gamma}}^{r,s}_2 \simeq \mr{E}^{r,s}_2$.
It follows that the abutment of \eqref{gen.5.1} is the direct sum of the abutments of the spectral sequences $\eqref{gen.5.2}_{\gamma}$ and the canonical morphism
\[
\bigoplus_{\gamma \in \Gamma} \pi_0\map{A}{K_{\gamma}}_{\D{\ind{\mb{A}}}}
\to
\pi_0\map{A}{K}_{\D{\ind{\mb{A}}}}
\]
is an isomorphism, as required.
\end{proof}

\begin{defn}
\label{gen.4}
Let $\mc{C}$ be a stable qcategory and $\mc{G} \subseteq \mc{C}$ a full subqcategory.
If $X \in \mc{C}$ is a zero object whenever $\map{G\brk{n}}{X}_{\mc{C}}$ is contractible for each $G \in \mc{G}$ and each $n \in \mb{Z}$, then we say that $\mc{G}$ \emph{generates $\mc{C}$ as a stable qcategory} or we say that $\mc{G}$ is a \emph{generating family in $\mc{C}$}.

This definition---a direct translation of the notion of a generating family in a triangulated category as developed in \cite{Neeman_triangulated-categories}---is not equivalent to the one introduced before Corollary 1.4.4.2 in \cite{Lurie_higher-algebra}.
Assuming $\mc{C}$ admits sufficiently large coproducts, a generating family $\mc{G}$ in our sense induces a generator $G := \coprod_{\mc{G} \times \mb{Z}}G\brk{n}$ in J.~Lurie's sense.
The classical definition is better adapted to a discussion of presentability: if $\mc{C}$ is locally $\mf{U}$-presentable and each $G \in \mc{G}$ is $\aleph_0$-presentable, then $X$ is almost never $\aleph_0$-presentable, even if $\mc{G}$ is spanned by a single object.
\end{defn}

\begin{prop}
\label{gen.5}
Let $\mb{A}$ be an essentially $\mf{U}$-small Abelian category and $\mc{G} := \brc{ A \brk{n} \mid A \in \mb{A},\ n \in \mb{Z}}$, where $A\brk{n}$ denotes the cochain complex in $\ind{\mb{A}}$ given by $A$ and concentrated in degree $-n$.
If $\mb{A}$ is Noetherian and $\on{hdim}\prns{\mb{A}} < \infty$, then $\mc{G}$ is a generating family of $\aleph_0$-presentable objects in $\D{\ind{\mb{A}}}$.
\end{prop}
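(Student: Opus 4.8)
The plan is to prove the two assertions separately: that every object of $\mc{G}$ is $\aleph_0$-presentable, and that $\mc{G}$ is a generating family in the sense of \ref{gen.4}.

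The first is immediate from the earlier results. Each $A\brk{n}$ is the image of $\iota A \in \D{\ind{\mb{A}}}$, where $\iota: \D{\mb{A}}^{\mr{b}} \to \D{\ind{\mb{A}}}$ is the functor of \ref{ab.5}, under the $n$-fold shift $\prns{-}\brk{n}: \D{\ind{\mb{A}}} \to \D{\ind{\mb{A}}}$. Since $\on{hdim}\prns{A} \leq \on{hdim}\prns{\mb{A}} < \infty$ and $\mb{A}$ is Noetherian, \ref{gen.3} shows that $\iota A$ is $\aleph_0$-presentable; as $\prns{-}\brk{n}$ is an equivalence it preserves $\aleph_0$-presentable objects, so $A\brk{n}$ is $\aleph_0$-presentable.

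For the second assertion I would work with the injective model structure $\cpx{\ind{\mb{A}}}\scr{}{\mr{inj}}$ of \ref{ab.3}$(ii)$ underlying $\D{\ind{\mb{A}}}$; only the facts that $\ind{\mb{A}}$ is $\mf{U}$-Grothendieck Abelian (\ref{ab.3}$(i)$) and that $\mb{A}$ is a set of generators of $\ind{\mb{A}}$ — every object of $\ind{\mb{A}}$ being a filtered colimit of objects of $\mb{A}$, and hence a quotient of a coproduct of such — will be needed, so that in fact neither the Noetherian hypothesis nor the finiteness of $\on{hdim}\prns{\mb{A}}$ enters this part. Let $X \in \D{\ind{\mb{A}}}$ and suppose $\map{G\brk{k}}{X}_{\D{\ind{\mb{A}}}}$ is contractible for every $G \in \mc{G}$ and every $k \in \mb{Z}$; as $\mc{G}$ is stable under shifts, this is equivalent to the vanishing of $\pi_0\map{A}{X\brk{n}}_{\D{\ind{\mb{A}}}}$ for all $A \in \mb{A}$ and all $n \in \mb{Z}$. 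Representing $X$ by a fibrant complex $I^{\bullet}$ of $\cpx{\ind{\mb{A}}}\scr{}{\mr{inj}}$ — such a complex being $\mr{K}$-injective — one has that $\pi_0\map{A}{X\brk{n}}_{\D{\ind{\mb{A}}}}$ is the degree-$n$ cohomology of the cochain complex of Abelian groups $\hom{A}{I^{\bullet}}_{\ind{\mb{A}}}$. The hypothesis therefore says that $\hom{A}{I^{\bullet}}_{\ind{\mb{A}}}$ is acyclic for every $A \in \mb{A}$.

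It then remains to conclude that $I^{\bullet}$ is itself acyclic, so that $X \simeq 0$ in $\D{\ind{\mb{A}}}$. Fix $n \in \mb{Z}$, set $Z^n := \ker\prns{I^n \to I^{n+1}} \in \ind{\mb{A}}$, and write $\delta: I^{n-1} \to Z^n$ for the morphism induced by the differential. Acyclicity of $\hom{A}{I^{\bullet}}_{\ind{\mb{A}}}$ in degree $n$ says precisely that $\delta_{*}: \hom{A}{I^{n-1}}_{\ind{\mb{A}}} \to \hom{A}{Z^n}_{\ind{\mb{A}}}$ is surjective; as this holds for every $A \in \mb{A}$ and $\mb{A}$ generates $\ind{\mb{A}}$, the morphism $\delta$ must be an epimorphism, for any epimorphism $\bigoplus_{i} A_i \twoheadrightarrow Z^n$ with the $A_i$ in $\mb{A}$ lifts componentwise through $\delta$. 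Hence $\h^n\prns{I^{\bullet}} = 0$ for every $n$, as required. I expect the only genuine subtlety to be that $X$ need not be cohomologically bounded, so that one cannot detect the nonvanishing of $X$ by inductively stripping off its cohomology objects; passing to a $\mr{K}$-injective model of $X$ sidesteps this entirely.
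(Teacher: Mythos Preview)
Your proof is correct. For $\aleph_0$-presentability you argue exactly as the paper does, invoking \ref{gen.3}. For the generation claim your route differs from the paper's: the paper works with an arbitrary representative $K$ and, assuming $\h^nK \neq 0$, exhibits a nonzero morphism from an element of $\mc{G}$ to $K$ by factoring through the chain map $\ker(d^n)[-n] \to K$; you instead pass to a $\mr{K}$-injective model $I^{\bullet}$, identify $\pi_0\map{A}{X[n]}_{\D{\ind{\mb{A}}}}$ with $\h^n\hom{A}{I^{\bullet}}_{\ind{\mb{A}}}$, and deduce from its vanishing for all $A \in \mb{A}$ that $I^{n-1} \to Z^n$ is an epimorphism. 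Both arguments ultimately rest on the same fact---that $\mb{A}$ is a family of generators of the abelian category $\ind{\mb{A}}$---and are essentially contrapositives of one another. Your version is a bit more systematic (and your observation that neither the Noetherian hypothesis nor finiteness of $\on{hdim}\prns{\mb{A}}$ enters the generation argument is correct and worth recording), while the paper's avoids invoking fibrant replacement but is terse about why the \emph{composite} $A[-n] \to \ker(d^n)[-n] \to K$ remains nonzero in $\D{\ind{\mb{A}}}$; your formulation sidesteps that subtlety cleanly.
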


\begin{proof}
The $\aleph_0$-presentability assertion follows from \ref{gen.3}.
That $\mc{G}$ is a generating family follows from the observation that, if $\h^nK \neq 0$ for some $n \in \mb{Z}$ and $\d^n: K^n \to K^{n+1}$ denotes the differential of $K$, then the canonical morphism of complexes $\ker\prns{\d^n} \to K$ is not equivalent to the zero morphism in $\D{\ind{\mb{A}}}$, where $\ker\prns{\d^n}$ is seen as a complex concentrated in degree $n$.
Up to translation, $\ker\prns{\d^n} \in \ind{\mb{A}}$ and $\ker\prns{\d^n}$ therefore admits a morphism from an element of $\mc{G}$ which is not equivalent to the zero morphism in $\D{\ind{\mb{A}}}$ by definition of $\ind{\mb{A}}$.
\end{proof}

\begin{prop}
\label{gen.6}
Let $\mb{A}$ be an essentially $\mf{U}$-small Noetherian Abelian category such that $\on{hdim}\prns{\mb{A}} < \infty$, and $\iota: \D{\mb{A}}^{\mr{b}} \hookrightarrow \D{\ind{\mb{A}}}$ the natural functor \textup{(\ref{ab.5})}.
For each $K \in \D{\ind{\mb{A}}}$, the following are equivalent:
\begin{enumerate}
\item
$K$ is equivalent to an object of the smallest idempotent-complete stable subqcategory of $\D{\ind{\mb{A}}}$ containing the generating family $\mc{G}$ of \textup{\ref{gen.5}};
\item
$K$ belongs to $\D{\mb{A}}^{\mr{b}}$;
\item
$K$ is $\aleph_0$-presentable.
\end{enumerate}
In particular, $\iota$ induces an equivalence $\ind{\D{\mb{A}}^{\mr{b}}} \simeq \D{\ind{\mb{A}}}$ and $\D{\ind{\mb{A}}}$ is locally $\aleph_0$-presentable.
\end{prop}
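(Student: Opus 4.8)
The plan is to prove the three conditions equivalent via the cycle $(i) \Rightarrow (iii) \Rightarrow (ii) \Rightarrow (i)$ and then to read off the two final assertions from the resulting identification $\D{\mb{A}}^{\mr{b}} = \D{\ind{\mb{A}}}\scr{}{\aleph_0}$ of full subqcategories of $\D{\ind{\mb{A}}}$. Write $\mc{T}$ for the subqcategory appearing in $(i)$, i.e.\ the smallest idempotent-complete stable subqcategory of $\D{\ind{\mb{A}}}$ containing $\mc{G}$. The starting observation is that, by \ref{ab.5}$(ii)$, the essential image of $\iota$ is the class of cohomologically bounded $K$ with every $\h^r K$ in the image of the Yoneda embedding $\mb{A} \hookrightarrow \ind{\mb{A}}$, and that this class is a stable subqcategory closed under retracts in $\D{\ind{\mb{A}}}$ --- cohomological boundedness is retract-stable, and $\mb{A} \simeq \ind{\mb{A}}_{\aleph_0}$ is idempotent complete, being Abelian --- so $\D{\mb{A}}^{\mr{b}}$ is an idempotent-complete stable subqcategory of $\D{\ind{\mb{A}}}$ containing $\mc{G}$; in particular $\mc{T} \subseteq \D{\mb{A}}^{\mr{b}}$.

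For $(i) \Rightarrow (iii)$: in the presentable stable qcategory $\D{\ind{\mb{A}}}$ (\ref{ab.3}) the full subqcategory $\D{\ind{\mb{A}}}\scr{}{\aleph_0}$ is a stable subqcategory closed under retracts --- hence idempotent complete --- and it contains $\mc{G}$ by \ref{gen.5}, so $\mc{T} \subseteq \D{\ind{\mb{A}}}\scr{}{\aleph_0}$. For $(ii) \Rightarrow (i)$: if $K \in \D{\mb{A}}^{\mr{b}}$, then $K$ is cohomologically bounded with each $\h^r K \in \mb{A}$, so it sits at the top of a \emph{finite} Postnikov tower for the natural $t$-structure on $\D{\ind{\mb{A}}}$ whose successive cofibers are cohomological shifts of the $\h^r K$, hence objects of $\mc{G}$; since $\mc{T}$ is a stable subqcategory containing $\mc{G}$, it follows that $K \in \mc{T}$.

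For $(iii) \Rightarrow (ii)$ I would first show that $\mc{G}$ generates $\D{\ind{\mb{A}}}$ under $\mf{U}$-colimits. Let $\mc{D}_0 \subseteq \D{\ind{\mb{A}}}$ be the $\mf{U}$-colimit closure of $\mc{G}$; it is a presentable localizing subqcategory of the presentable stable qcategory $\D{\ind{\mb{A}}}$, so its inclusion admits a right adjoint and, for each $X$, the cofiber of the counit at $X$ lies in $\mc{D}_0^{\perp} \subseteq \mc{G}^{\perp}$, which is zero since $\mc{G}$ is a generating family (\ref{gen.5}); hence $\mc{D}_0 = \D{\ind{\mb{A}}}$. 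A fortiori the $\mf{U}$-colimit closure of $\D{\mb{A}}^{\mr{b}} \supseteq \mc{G}$ is all of $\D{\ind{\mb{A}}}$; writing an arbitrary $\mf{U}$-colimit as an $\aleph_0$-filtered $\mf{U}$-colimit of finite colimits (\cite[4.2.3.11]{Lurie_higher-topos}) and using that the stable subqcategory $\D{\mb{A}}^{\mr{b}}$ is closed under finite colimits, one concludes that every object of $\D{\ind{\mb{A}}}$ is an $\aleph_0$-filtered $\mf{U}$-colimit of objects of $\D{\mb{A}}^{\mr{b}}$. If $K$ is $\aleph_0$-presentable, such a presentation $K \simeq \colim_i K_i$ makes $\id_K$ factor through some $K_i$, so $K$ is a retract of $K_i \in \D{\mb{A}}^{\mr{b}}$ and hence, by the idempotent completeness of $\D{\mb{A}}^{\mr{b}}$ recorded above, $K \in \D{\mb{A}}^{\mr{b}}$. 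This closes the cycle, so $\mc{T} = \D{\mb{A}}^{\mr{b}} = \D{\ind{\mb{A}}}\scr{}{\aleph_0}$ as full subqcategories of $\D{\ind{\mb{A}}}$; and $\D{\ind{\mb{A}}}$ --- being presentable, stable, and generated under $\mf{U}$-colimits by the small idempotent-complete stable subqcategory $\D{\ind{\mb{A}}}\scr{}{\aleph_0}$ of compact objects --- is $\aleph_0$-compactly generated, i.e.\ locally $\aleph_0$-presentable, with $\D{\ind{\mb{A}}} \simeq \ind{\D{\ind{\mb{A}}}\scr{}{\aleph_0}} \simeq \ind{\D{\mb{A}}^{\mr{b}}}$, the last equivalence being induced by $\iota$.

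The step I expect to be the main obstacle is the claim that $\mc{G}$ generates $\D{\ind{\mb{A}}}$ under colimits. One must resist invoking a theorem about a single compact generator: as \ref{gen.4} warns, the object $\coprod_{\mc{G} \times \mb{Z}} G\brk{n}$ built from $\mc{G}$ is a generator in J.~Lurie's sense but is \emph{not} $\aleph_0$-presentable, so results stated in that form do not apply. The Bousfield-localization argument above --- or, equivalently, the multi-object form of Neeman's compact-generation theorem (\cite{Neeman_triangulated-categories}) --- is what bridges the gap; the one point in it deserving care is the verification that the $\mf{U}$-colimit closure $\mc{D}_0$ is genuinely presentable, so that the Adjoint Functor Theorem (\cite[5.5.2.9]{Lurie_higher-topos}) applies.
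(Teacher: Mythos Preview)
Your proof is correct and proves the same equivalences as the paper, but the cycle is run in a different order and the ``hard'' implication is handled by a different mechanism. The paper proves $(i)\Rightarrow(ii)\Rightarrow(iii)\Rightarrow(i)$: its $(iii)\Rightarrow(i)$ invokes the explicit inductive construction from the proof of \cite[1.4.4.2]{Lurie_higher-algebra}, building the chain $\mc{C}(0)\subseteq\mc{C}(1)\subseteq\cdots$ of finite-colimit closures of $\mc{G}$, identifying $\ind{\mc{C}(\bs{\omega})}\simeq\D{\ind{\mb{A}}}$, and then reading off $\D{\ind{\mb{A}}}_{\aleph_0}$ as the idempotent completion of $\mc{C}(\bs{\omega})$ via \cite[5.4.2.4]{Lurie_higher-topos}. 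Your $(iii)\Rightarrow(ii)$ replaces this with a Bousfield-localization argument: the colimit closure $\mc{D}_0$ of $\mc{G}$ is a presentable localizing subqcategory whose right orthogonal vanishes because $\mc{G}$ generates, hence $\mc{D}_0=\D{\ind{\mb{A}}}$; then the retract trick against a filtered presentation lands a compact object in $\D{\mb{A}}^{\mr{b}}$. The paper's route is more self-contained (it never needs the Adjoint Functor Theorem or the presentability of $\mc{D}_0$, only that $\mc{G}$ consists of compact objects and detects zero), while yours is more conceptual and makes the role of \ref{gen.5} as a \emph{generating} statement more visible. One small remark: your appeal to \cite[4.2.3.11]{Lurie_higher-topos} to pass from ``colimit closure'' to ``filtered-colimit closure'' of $\D{\mb{A}}^{\mr{b}}$ is morally right but would be cleaner phrased as the cocontinuity of the induced functor $\ind{\D{\mb{A}}^{\mr{b}}}\to\D{\ind{\mb{A}}}$ (since $\iota$ is exact and $\D{\mb{A}}^{\mr{b}}$ has finite colimits), which immediately gives that its essential image is closed under all colimits; this also dispatches your flagged concern about the presentability of $\mc{D}_0$.
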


\begin{proof}
By \ref{ab.5}, $\iota$ is exact and fully faithful. 
This justifies the abuse in $(ii)$ of identifying $\D{\mb{A}}^{\mr{b}}$ with its essential image under $\iota$.
Since $\D{\mb{A}}^{\mr{b}}$ is an idempotent-complete stable subqcategory of $\D{\ind{\mb{A}}}$, $(i)$ implies $(ii)$.

By \ref{gen.3}, the elements of $\mc{G}$ are $\aleph_0$-presentable.
In terms of the natural t-structure on $\D{\mb{A}}^{\mr{b}}$, this means that each object of $\D{\ind{\mb{A}}}$ whose cohomology is concentrated in a single degree and given by an object $A \in \mb{A}$ is $\aleph_0$-presentable.
Using the fiber sequences associated with the truncation functors for the natural t-structure and the fact that each finite colimit of $\aleph_0$-presentable objects is $\aleph_0$-presentable, this proves by induction that each $K \in \D{\ind{\mb{A}}}$ whose cohomology objects are concentrated in finitely many degrees and belong to $\mb{A}$ is $\aleph_0$-presentable.
Thus, $(ii)$ implies $(iii)$ by \ref{ab.5}$(ii)$.

It remains to prove $(iii)$ implies $(i)$.
Define a chain $\mc{C}\prns{0} \subseteq \mc{C}\prns{1} \subseteq \cdots$ of full subqcategories of $\D{\ind{\mb{A}}}$ inductively as follows: 
$\mc{C}\prns{0}$ is the full subqcategory spanned by $\mc{G}$ of \ref{gen.5}; 
and if $\mc{C}\prns{r}$ has been defined for some $r \in \mb{Z}$, define $\mc{C}\prns{r+1}$ to be the full subqcategory spanned by all objects arising as colimits of finite diagrams in $\mc{C}\prns{r}$.
Letting $\mc{C}\prns{\bs{\omega}} := \bigcup_{r \in \mb{Z}_{\geq 0}} \mc{C}\prns{r}$, the argument of \cite[1.4.4.2]{Lurie_higher-algebra} now shows that $\ind{\mc{C}\prns{\bs{\omega}}} \simeq \D{\ind{\mb{A}}}$.
It follows then from \cite[5.4.2.4]{Lurie_higher-topos} that $\D{\ind{\mb{A}}}\scr{}{\aleph_0}$ is an idempotent completion of $\mc{C}\prns{\bs{\omega}}$.
Note that this argument relies on the fact that $\mc{G}$ is a generating family of $\aleph_0$-presentable objects (\ref{gen.5}).
In any event, the idempotent completion of $\mc{C}\prns{\bs{\omega}}$ is precisely the smallest idempotent-complete stable subqcategory of $\D{\ind{\mb{A}}}$ containing $\mc{G}$ and the claim follows.
The equivalence $\ind{\mc{C}\prns{\bs{\omega}}} \simeq \D{\ind{\mb{A}}}$ also proves the final assertion.
\end{proof}

\begin{cor}
\label{gen.7}
Let $\mb{A}^{\otimes}$ be an essentially $\mf{U}$-small $\KK$-linear symmetric monoidal Abelian category.
Suppose that $\mb{A}$ is Noetherian, $\on{hdim}\prns{\mb{A}} < \infty$ and each object of $\mb{A}$ is $\otimes$-dualizable \textup{(\cite[4.6.1.12]{Lurie_higher-algebra})}.
The fully faithful functor $\iota: \D{\mb{A}}^{\mr{b}} \hookrightarrow \D{\ind{\mb{A}}}$ of \textup{\ref{ab.5}} underlies a symmetric monoidal functor and, for each $K \in \D{\ind{\mb{A}}}$, the equivalent conditions $(i)$, $(ii)$ and $(iii)$ of \textup{\ref{gen.6}} are furthermore equivalent to the following:
\begin{enumerate}
\item[$(iv)$]
$K$ is $\otimes$-dualizable.
\end{enumerate}
In particular, $\iota^{\otimes}$ induces an equivalence $\ind{\D{\mb{A}}^{\mr{b}}}\scr{\otimes}{} \simeq \D{\ind{\mb{A}}}\scr{\otimes}{}$.
\end{cor}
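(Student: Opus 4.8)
The plan is to treat three things in sequence: that $\iota$ underlies a symmetric monoidal functor; that the property of being $\otimes$-dualizable fits into the chain of equivalences $(i) \Leftrightarrow (ii) \Leftrightarrow (iii)$ of \ref{gen.6}; and that the equivalence $\ind{\D{\mb{A}}^{\mr{b}}} \simeq \D{\ind{\mb{A}}}$ furnished by \ref{gen.6} can be promoted to an equivalence of symmetric monoidal qcategories.

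First, if $X \in \mb{A}$ is $\otimes$-dualizable with dual $X^{\vee}$, then $X \otimes \prns{-}$ is both left and right adjoint to $X^{\vee} \otimes \prns{-}$, hence exact; since by hypothesis every object of $\mb{A}$ is $\otimes$-dualizable, $\prns{-} \otimes \prns{-}$ is exact separately in each variable, so \ref{ab.5}$(iii)$ applies and yields a symmetric monoidal functor $\iota^{\otimes}: \D{\mb{A}}^{\mr{b}}\scr{\otimes}{} \to \D{\ind{\mb{A}}}\scr{\otimes}{}$ underlying $\iota$. Next I would prove $(ii) \Rightarrow (iv)$. A symmetric monoidal functor sends $\otimes$-dualizable objects to $\otimes$-dualizable objects, so $\iota$ carries each object of $\mb{A}$ to a $\otimes$-dualizable object of $\D{\ind{\mb{A}}}$; as shifts of $\otimes$-dualizable objects are again $\otimes$-dualizable, the generating family $\mc{G}$ of \ref{gen.5} consists entirely of $\otimes$-dualizable objects. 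Because $\prns{-} \otimes \prns{-}$ is exact separately in each variable on $\D{\ind{\mb{A}}}$ (\ref{ab.2}, \ref{ab.4}), the $\otimes$-dualizable objects span an idempotent-complete stable subqcategory of $\D{\ind{\mb{A}}}$: they are visibly closed under retracts and shifts, and given a cofiber sequence $X \to Y \to Z$ with $X$ and $Y$ dualizable, $\on{fib}\prns{Y^{\vee} \to X^{\vee}}$ is a dual of $Z$. By the equivalence $(i) \Leftrightarrow (ii)$ of \ref{gen.6}, $\D{\mb{A}}^{\mr{b}}$ is the smallest idempotent-complete stable subqcategory of $\D{\ind{\mb{A}}}$ containing $\mc{G}$, so it is contained in the subqcategory of $\otimes$-dualizable objects; thus $(ii) \Rightarrow (iv)$.

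For $(iv) \Rightarrow (iii)$: the unit $\mathbf{1}$ of $\D{\ind{\mb{A}}}\scr{\otimes}{}$ is the image of the unit object of $\mb{A}^{\otimes}$ under $\mb{A} \hookrightarrow \ind{\mb{A}} \hookrightarrow \D{\ind{\mb{A}}}$, which is $\aleph_0$-presentable by \ref{gen.3} (as $\mb{A}$ is Noetherian and $\on{hdim}\prns{\mb{A}} < \infty$). Moreover $\prns{-} \otimes \prns{-}$ is $\mf{U}$-cocontinuous separately in each variable on $\D{\ind{\mb{A}}}$: every object of the injective model structure on $\cpx{\ind{\mb{A}}}$ is cofibrant, so this functor is modeled by the degreewise tensor product, which is $\mf{U}$-cocontinuous by \ref{ab.3}$(iii)$. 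Hence, for $K$ $\otimes$-dualizable with dual $K^{\vee}$, the natural equivalence $\map{K}{-}_{\D{\ind{\mb{A}}}} \simeq \map{\mathbf{1}}{K^{\vee} \otimes \prns{-}}_{\D{\ind{\mb{A}}}}$ exhibits $\map{K}{-}_{\D{\ind{\mb{A}}}}$ as preserving $\aleph_0$-filtered colimits, so $K$ is $\aleph_0$-presentable. Together with \ref{gen.6} this proves $(i) \Leftrightarrow (ii) \Leftrightarrow (iii) \Leftrightarrow (iv)$.

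It remains to upgrade the equivalence of \ref{gen.6}. By the preceding, $\D{\mb{A}}^{\mr{b}}$, being the subqcategory of $\otimes$-dualizable objects of $\D{\ind{\mb{A}}}$, is closed under tensor products, so $\iota^{\otimes}$ realizes it as a symmetric monoidal subqcategory $\D{\mb{A}}^{\mr{b}}\scr{\otimes}{}$. Now $\D{\ind{\mb{A}}}\scr{\otimes}{}$ is a presentable symmetric monoidal qcategory (its tensor product is $\mf{U}$-cocontinuous in each variable, as just noted) that is $\aleph_0$-compactly generated by \ref{gen.6}, while $\D{\mb{A}}^{\mr{b}}$ is stable, hence finitely cocomplete, with tensor product exact in each variable (\ref{ab.2}). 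The symmetric monoidal universal property of the ind-completion (\cite[4.8.1.13]{Lurie_higher-algebra}, \cite[5.3.6.2]{Lurie_higher-topos}) therefore extends $\iota^{\otimes}$ to a $\mf{U}$-cocontinuous symmetric monoidal functor $\ind{\D{\mb{A}}^{\mr{b}}}\scr{\otimes}{} \to \D{\ind{\mb{A}}}\scr{\otimes}{}$ whose underlying functor is the equivalence of \ref{gen.6}; since a symmetric monoidal functor with invertible underlying functor is itself an equivalence of symmetric monoidal qcategories, we obtain $\ind{\D{\mb{A}}^{\mr{b}}}\scr{\otimes}{} \simeq \D{\ind{\mb{A}}}\scr{\otimes}{}$. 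I expect the main obstacle to be this last step, namely invoking the symmetric monoidal ind-completion and its universal property with all the finite-cocompleteness and right-exactness hypotheses correctly in place; verifying that the $\otimes$-dualizable objects are closed under cofibers is a close second.
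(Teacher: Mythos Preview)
Your argument is correct and follows essentially the same route as the paper: deduce exactness of $\otimes$ from dualizability, invoke \ref{ab.5}$(iii)$ for $\iota^{\otimes}$, show $\mc{G}$ consists of dualizable objects and that dualizables form a thick subcategory (the paper says ``stable under finite limits and retracts'' without spelling out the cofiber case as you do), and then prove $(iv)\Rightarrow(iii)$ via $\map{K}{-}\simeq\map{\mathbf{1}}{K^{\vee}\otimes(-)}$ together with $\aleph_0$-presentability of the unit (the paper cites \ref{gen.5} rather than \ref{gen.3}, but either works). The one genuine difference is that you actually argue for the final clause $\ind{\D{\mb{A}}^{\mr{b}}}^{\otimes}\simeq\D{\ind{\mb{A}}}^{\otimes}$ via the symmetric monoidal universal property of ind-completion, whereas the paper's proof stops after establishing $(iv)\Leftrightarrow(iii)$ and leaves that ``In particular'' unaddressed; your extra paragraph is a reasonable way to fill that in.
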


\begin{proof}
The hypothesis that each $A \in \mb{A}$ be $\otimes$-dualizable implies that the tensor product in $\mb{A}^{\otimes}$ is exact separately in each variable: for each $A \in \mb{A}$, $A \otimes \prns{-}$ is both left and right adjoint functor and therefore preserves all limits and colimits that exist in $\mb{A}$.
By \ref{ab.5}$(iii)$, we have the desired symmetric monoidal functor $\iota^{\otimes}: \D{\mb{A}}^{\mr{b}}\scr{\otimes}{} \hookrightarrow \D{\ind{\mb{A}}}\scr{\otimes}{}$.
It follows from \cite[1.3.4.5, 4.1.3.4]{Lurie_higher-algebra} and \ref{ab.2} that we have symmetric monoidal functors 
\[
\mb{A}^{\otimes} 
\to 
\cpx{\mb{A}}^{\mr{b}}\scr{\otimes}{} 
\to 
\K{\mb{A}}^{\mr{b}}\scr{\otimes}{} 
\to 
\D{\mb{A}}^{\mr{b}}\scr{\otimes}{}
\]
whose composite sends $A$ to $A\brk{0}$, and thus that $\mc{G}$ consists of $\otimes$-dualizable objects.
The full subqcategory spanned by the $\otimes$-dualizable objects is stable under finite limits and retracts, so $(i)$ implies $(iv)$.

We claim that $(iv)$ implies $(iii)$.
If $K$ is $\otimes$-dualizable, $\brc{L_{\gamma}}_{\gamma \in \Gamma}$ is a $\mf{U}$-family of objects of $\D{\ind{\mb{A}}}$ and $L := \coprod_{\gamma \in \Gamma}L_{\gamma}$, then 
\begin{align*}
\pi_0\map{K}{L}_{\D{\ind{\mb{A}}}}
&\simeq
\pi_0\map{\1{\mb{T}}}{K^{\vee} \otimes L}_{\D{\ind{\mb{A}}}}
&&
\text{adjunction}
\\
&\simeq
\pi_0\map[1]{\1{\mb{A}}}{\coprod_{\gamma \in \Gamma}\prns{ K^{\vee} \otimes L_{\gamma}}}_{\D{\ind{\mb{A}}}}
\\
&\simeq
\bigoplus_{\gamma \in \Gamma} \pi_0\map{\1{\mb{A}}}{K^{\vee} \otimes L_{\gamma}}_{\D{\ind{\mb{A}}}}
&&
\textup{\ref{gen.5}}
\\
&\simeq
\bigoplus_{\gamma \in \Gamma} \pi_0\map{K}{L_{\gamma}}_{\D{\ind{\mb{A}}}}
\end{align*}
and $K$ is therefore $\aleph_0$-presentable.
\end{proof}

\allsectionsfont{\centering\fontseries{m}\scshape}
\bibliographystyle{alpha}
\bibliography{all}
\end{document}